\newtheorem{theorem}{Theorem}
\newtheorem{lemma}[theorem]{Lemma}
\newtheorem{coro}[theorem]{Corollary}
\newtheorem{proposition}[theorem]{Proposition}
\newcounter{other}            
\newtheorem{otherth}[other]{Theorem}              
\newtheorem{otherp}[other]{Proposition}
\newtheorem{otherl}[other]{Lemma}        
\newcommand{\Cn}{\mathbb{C}^n}
\newcommand{\Sn}{\mathbb{S}_ n}
\newcommand{\Bn}{\mathbb{B}_ n}
\newcommand{\D}{\Bbb D}
\numberwithin{equation}{section}
\begin{document}

\title{A Toeplitz type operator on Hardy spaces in the unit ball}
\author[Jordi Pau]{Jordi Pau}
\address{Jordi Pau \\Departament de Matem\`{a}tiques i Inform\`{a}tica \\
Universitat de Barcelona\\
08007 Barcelona\\
Catalonia, Spain} \email{jordi.pau@ub.edu}

\author[Antti Per\"{a}l\"{a}]{Antti Per\"{a}l\"{a}}
\address{Antti Per\"{a}l\"{a} \\Departament de Matem\`{a}tiques i Inform\`{a}tica \\
Universitat de Barcelona\\
08007 Barcelona\\
Catalonia, Spain\\
Barcelona Graduate School of Mathematics (BGSMath).} \email{perala@ub.edu}


%
\subjclass[2010]{47B35, 30H10}

\keywords{Hardy spaces, Toeplitz operators, tent spaces, Schatten classes}

\thanks{The first author was partially
 supported by  DGICYT grant MTM2014-51834-P
(MCyT/MEC) and the grant 2017SGR358 (Generalitat de Catalunya). The second author acknowledges financial support from the Spanish Ministry of Economy and Competitiveness, through the Mar\'ia de Maeztu Programme for Units of Excellence in R\&D (MDM-2014-0445). Both authors are also supported by the grant MTM2017-83499-P (Ministerio de Educaci\'{o}n y Ciencia).}


\begin{abstract}
We study a Toeplitz type operator $Q_\mu$ between the holomorphic Hardy spaces $H^p$ and $H^q$ of the unit ball. Here the generating symbol $\mu$ is assumed to a positive Borel measure. This kind of operator is related to many classical mappings acting on Hardy spaces, such as composition operators, the Volterra type integration operators and Carleson embeddings. We completely characterize the boundedness and compactness of $Q_\mu:H^p\to H^q$ for the full range $1<p,q<\infty$; and also describe the membership in the Schatten classes of $H^2$. In the last section of the paper, we demonstrate the usefulness of $Q_\mu$ through applications.
\end{abstract}

\maketitle


\section{Introduction and main results}

Let $\Bn=\{z\in \mathbb{C}^n:|z|<1\}$ be the open unit ball in $\Cn$, the Euclidian space of complex dimension $n$.   For any two points $z=(z_ 1,\dots,z_ n)$ and $w=(w_ 1,\dots,w_ n)$ in $\Cn$ we write
\[
\langle z,w\rangle =z_ 1\bar{w}_ 1+\dots +z_ n \bar{w}_ n,
\]
 and
\[
 |z|=\sqrt{\langle z,z\rangle}=\sqrt{|z_ 1|^2+\dots +|z_ n|^2}.
\]
 For a positive Borel measure $\mu$ on $\Bn$,
the Toeplitz type operator $Q_ {\mu}$ is defined as
\begin{displaymath}
Q_{\mu} f(z)=\int_{\Bn} \!\!\frac{f(w)}{(1-\langle z,w \rangle )^n}\,d\mu(w),\qquad z\in \Bn.
\end{displaymath}

In the one variable setting, the operator $Q_{\mu}$ appeared in \cite{Lu}, where a description of the membership of $Q_{\mu}$ in the Schatten ideals $S_ p$ of the Hardy space $H^2$ was obtained. As mentioned in that paper, this operator is closely related with the study of composition operators, and later on in \cite{AS1} a connection with a Volterra type integration operator was given.
As far as we know, it seems that the operator $Q_{\mu}$ has not been studied in the setting of Hardy spaces in the unit ball.

 For $0<p<\infty$, the Hardy space $H^p:=H^p(\Bn)$ consists of those holomorphic functions $f$ in $\Bn$ with
 \[ \|f\|_{H^p}^p=\sup_{0<r<1}\int_{\Sn} \!\! |f(r\zeta)|^p \,d\sigma(\zeta)<\infty,\]
where $d\sigma$ is the surface measure on the unit sphere $\Sn:=\partial \Bn$ normalized so that $\sigma(\Sn)=1$. Moreover, any function in $H^p$ has radial limits $f(\zeta)=\lim_{r\to 1^{-}} f(r\zeta)$ for a.e. $\zeta \in \Sn$; and $H^2$ becomes a Hilbert space when endowed with the inner product

$$\langle f,g\rangle_{H^2}= \int_{\Sn} f(\zeta)\overline{g(\zeta)}d\sigma(\zeta).$$

We refer the reader to the books \cite{Rud} and \cite{ZhuBn} for the theory of Hardy spaces in the unit ball.

We completely describe the boundedness of $Q_{\mu}:H^p\rightarrow H^q$ for $1<p,q<\infty$ (the case $p\neq q$ seems to be new even in one dimension), as well as characterizing its membership in $S_ p(H^2)$, thus generalizing Luecking's results to higher dimensions.

Before stating the main results of the paper, we recall the concept of a Carleson measure.
For $\zeta \in \Sn$ and $\delta>0$ consider the non-isotropic metric balls
\[B_{\delta}(\zeta)=\big \{ z\in \Bn: |1-\langle z, \zeta \rangle |<\delta \big \}.\]
A positive Borel measure $\mu$ on $\Bn$ is said to be a Carleson measure if there exists a constant $C>0$ such that
\[\mu \big (B_{\delta}(\zeta)\big )\le C \delta \, ^ n\]
for all $\zeta \in \Sn$ and $\delta>0$. Obviously every Carleson measure is finite. H\"{o}rmander \cite{H} extended to several complex variables the famous Carleson measure embedding theorem \cite{Car0, Car1} asserting that, for $0<p<\infty$, the embedding $I_ d:H^p\rightarrow L^p(\mu):=L^p(\Bn,d\mu)$ is bounded if and only if $\mu$ is a Carleson measure.

More generally, for $s>0$, a finite positive Borel measure on $\Bn$ is called an $s$-Carleson measure if there exists a constant $C>0$ such that $\mu(B_{\delta}(\zeta))\le C \delta \,^{ns}$ for all $\zeta\in \Sn$ and $\delta>0$. We denote by $\|\mu\|_{CM_s}$ the infimum of all possible $C$ above.

 It is well known (see \cite[Theorem 45]{ZZ}) that $\mu$ is an $s$-Carleson measure if and only if for each (some) $t>0$ one has
\begin{equation}\label{sCM}
\sup_{a\in \Bn}\int_{\Bn} \!\!\frac{(1-|a|^2)^t}{|1-\langle a,z \rangle |^{ns+t}} \,d\mu(z)<\infty.
\end{equation}
Moreover, with constants depending on $t$, the supremum of the above integral is comparable to $\|\mu\|_{CM_s}$.

In \cite{Du}, Duren gave an extension of Carleson's theorem by showing that, for
 $0<p<q<\infty$, one has that $I_ d:H^p\rightarrow L^q(\mu)$ is bounded if and only if $\mu$ is a $q/p$-Carleson measure. Moreover, one has the estimate $\|I_ d\|_{H^p\rightarrow L^q(\mu)}\asymp \|\mu\|^{1/q}_{CM_{q/p}}$. A simple proof of this result, in the setting of the unit ball, can be found in \cite{P1} for example.\\

Our first result characterizes the boundedness of the Toeplitz type operator $Q_{\mu}:H^p\rightarrow H^q$ when $1<p\le q$ in terms of $s$-Carleson measures.
\begin{theorem}\label{T2}
Let $1<p\le q<\infty$ and $\mu$ be a positive Borel measure on $\Bn$. Then $Q_{\mu}:H^p\rightarrow H^q$ is bounded  if and only if $\mu$ is an $(1+\frac{1}{p}-\frac{1}{q})$- Carleson measure. Moreover,
\[
\big \| Q_{\mu} \big \|_{H^p\rightarrow H^q} \asymp \big \|\mu \big \|_{CM_ s}.
\]
\end{theorem}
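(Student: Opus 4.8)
The plan is to reduce the whole statement to two ingredients already available: the reproducing property of the Cauchy--Szeg\H{o} kernel $(1-\langle\cdot,\cdot\rangle)^{-n}$ on $\Bn$, and the Carleson--H\"ormander / Duren embedding theorems recalled above. Throughout put $s=1+\tfrac1p-\tfrac1q=\tfrac1p+\tfrac1{q'}$, where $q'$ is conjugate to $q$; note $s\ge1$ because $p\le q$. The device underlying both directions is the exact identity
\[
\int_{\Sn} Q_\mu f(r\zeta)\,\overline{g(\zeta)}\,d\sigma(\zeta)=\int_{\Bn} f(w)\,\overline{g(rw)}\,d\mu(w),\qquad 0<r<1,
\]
valid for all bounded holomorphic $f,g$ on $\Bn$. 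To obtain it I would dilate, apply Fubini --- legitimate since $\mu$ is finite, $f$ is bounded, and $(1-\langle r\zeta,w\rangle)^{-n}$ is bounded on $\Bn$ for fixed $r<1$ --- and evaluate the inner integral over $\Sn$ via the reproducing formula $\int_{\Sn}g(\zeta)(1-\langle u,\zeta\rangle)^{-n}\,d\sigma(\zeta)=g(u)$ applied at $u=rw$, after using $\overline{1-\langle r\zeta,w\rangle}=1-\langle rw,\zeta\rangle$.

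\emph{Sufficiency.} Suppose $\mu$ is an $s$-Carleson measure and set $p_1=sp$, $p_2=sq'$. Then $p_1\ge p$, $p_2\ge q'$, and $\tfrac1{p_1}+\tfrac1{p_2}=\tfrac1s\big(\tfrac1p+\tfrac1{q'}\big)=1$. Fix a polynomial $f$. Since $1<q'<\infty$, the Szeg\H{o} projection is bounded on $L^{q'}(\Sn)$, so $\|(Q_\mu f)_r\|_{L^q(\Sn)}$ is comparable to the supremum of $\big|\int_{\Sn}Q_\mu f(r\zeta)\overline{g(\zeta)}\,d\sigma\big|$ over polynomials $g$ with $\|g\|_{H^{q'}}\le 1$. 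By the identity above and H\"older this quantity is at most $\|f\|_{L^{p_1}(\mu)}\,\|g_r\|_{L^{p_2}(\mu)}$, where $g_r(w)=g(rw)$. Because $p_1/p=p_2/q'=s$ and $\|g_r\|_{H^{q'}}\le\|g\|_{H^{q'}}$, Duren's theorem --- with the Carleson--H\"ormander theorem handling the endpoint $p_1=p$ (that is, $p=q$, $s=1$) --- gives $\|f\|_{L^{p_1}(\mu)}\lesssim\|\mu\|_{CM_s}^{1/p_1}\|f\|_{H^p}$ and $\|g_r\|_{L^{p_2}(\mu)}\lesssim\|\mu\|_{CM_s}^{1/p_2}\|g\|_{H^{q'}}$; multiplying, the powers of $\|\mu\|_{CM_s}$ sum to $1$. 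The resulting bound $\|(Q_\mu f)_r\|_{L^q(\Sn)}\lesssim\|\mu\|_{CM_s}\|f\|_{H^p}$ is uniform in $r$, and since $r\mapsto\int_{\Sn}|Q_\mu f(r\zeta)|^q\,d\sigma$ is nondecreasing this yields $\|Q_\mu f\|_{H^q}\lesssim\|\mu\|_{CM_s}\|f\|_{H^p}$; density of polynomials in $H^p$ completes this implication and shows $\|Q_\mu\|\lesssim\|\mu\|_{CM_s}$.

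\emph{Necessity and the lower bound.} Fix $\beta>\max\{n/p,\,n/q'\}$ and, for $a\in\Bn$, take the standard test functions
\[
f_a(z)=\frac{(1-|a|^2)^{\beta-n/p}}{(1-\langle z,a\rangle)^{\beta}},\qquad g_a(z)=\frac{(1-|a|^2)^{\beta-n/q'}}{(1-\langle z,a\rangle)^{\beta}},
\]
which are bounded holomorphic on $\Bn$ and satisfy $\|f_a\|_{H^p}\asymp\|g_a\|_{H^{q'}}\asymp1$ by the familiar estimates for $\int_{\Sn}|1-\langle\zeta,a\rangle|^{-c}\,d\sigma$. Feeding $f=f_a$, $g=g_a$ into the identity above, letting $r\to1^-$ (legitimate because $Q_\mu f_a\in H^q$ with $1<q<\infty$, so $(Q_\mu f_a)_r\to Q_\mu f_a$ in $L^q(\Sn)$, while $g_a$ is bounded), and using $\overline{1-\langle w,a\rangle}=1-\langle a,w\rangle$, one obtains
\[
\int_{\Sn}Q_\mu f_a(\zeta)\,\overline{g_a(\zeta)}\,d\sigma(\zeta)=\int_{\Bn}\frac{(1-|a|^2)^{t}}{|1-\langle a,w\rangle|^{ns+t}}\,d\mu(w),\qquad t:=2\beta-ns>0,
\]
where the exponents match \eqref{sCM} precisely because $ns=\tfrac np+\tfrac n{q'}$ --- this is the role of the choice of exponents in $f_a,g_a$. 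If $Q_\mu\colon H^p\to H^q$ is bounded, the left side is $\le\|Q_\mu f_a\|_{H^q}\|g_a\|_{H^{q'}}\lesssim\|Q_\mu\|$, so the supremum over $a\in\Bn$ of the integral in \eqref{sCM} is finite; by the comparability recorded after \eqref{sCM}, $\mu\in CM_s$ with $\|\mu\|_{CM_s}\lesssim\|Q_\mu\|$. Together with the previous paragraph this gives $\|Q_\mu\|_{H^p\to H^q}\asymp\|\mu\|_{CM_s}$.

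\emph{Main obstacle.} The test-function estimates and the embedding-theorem bookkeeping are routine; the one point needing care is that, a priori, one does not know $Q_\mu f\in H^q$ --- this is part of the conclusion --- so the pairing identity and the recovery of $\|Q_\mu f\|_{H^q}$ from pairings with $H^{q'}$ cannot simply be quoted from $H^q$--$H^{q'}$ duality, but must be run through the dilations $f_r,g_r$ and the $L^{q'}$-boundedness of the Szeg\H{o} projection. Once that framework is in place the argument proceeds as above.
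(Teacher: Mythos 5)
Your proof is correct and follows essentially the same route as the paper: both directions hinge on the identity $\int_{\Sn} Q_\mu f\,\overline{g}\,d\sigma=\int_{\Bn} f\overline{g}\,d\mu$, with sufficiency obtained by H\"older with exponents $ps$ and $(ps)'$ followed by two applications of the Carleson--Duren embedding (giving the matching powers $\|\mu\|_{CM_s}^{1/ps+1/(ps)'}=\|\mu\|_{CM_s}$), and necessity by testing on normalized kernels and invoking \eqref{sCM}. The only differences are cosmetic: you justify the pairing via dilations, the Cauchy--Szeg\H{o} reproducing formula and the $L^{q'}$-boundedness of the Szeg\H{o} projection, where the paper uses the fractional radial operators $R^{\alpha,t}$ and a weighted volume pairing; and your necessity pairs $Q_\mu f_a$ against a second test function $g_a$, where the paper applies the pointwise growth estimate $|F(z)|\le (1-|z|^2)^{-n/q}\|F\|_{H^q}$ to $F=Q_\mu K_z$ on the diagonal.
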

\mbox{}
\\
The notation $A\asymp B$ means that the two quantities are comparable, and $\|T\|_{X\rightarrow Y}$ denotes the norm of the operator $T:X\rightarrow Y$.\\

For $\zeta \in \Sn$ and $\gamma>1$ the admissible approach region $\Gamma_{\gamma}(\zeta)$ is defined as
\begin{displaymath}
\Gamma(\zeta)=\Gamma_{\gamma}(\zeta)=\left \{z\in \Bn: |1-\langle z,\zeta\rangle |<\frac{\gamma}{2} (1-|z|^2) \right \}.
\end{displaymath}
As we will see later, for most of the properties we will use, the choice of $\gamma$ is not important.\\

For a positive Borel measure $\mu$ on $\Bn$, we set
\[
 \widetilde{\mu}(\zeta)=\int_{\Gamma(\zeta)} (1-|z|^2)^{-n}d\mu(z),\qquad \zeta\in \Sn.
\]
The characterization of the boundedness of $Q_{\mu}$ from $H^p$ to $H^q$ in the case $1<q<p<\infty$ will be given in terms of the function $\widetilde{\mu}$.
\begin{theorem}\label{T3}
Let $1<q<p<\infty$ and $\mu$ be a positive Borel measure on $\Bn$. Then $Q_{\mu}:H^p\rightarrow H^q$ is bounded  if and only if $\widetilde{\mu}$ belongs to $L^{r}(\Sn)$ with $r=pq/(p-q)$. Moreover, one has
\[
\big \| Q_{\mu} \big \|_{H^p\rightarrow H^q} \asymp \big \|\widetilde{\mu} \big \|_{L^r(\Sn)}.
\]
\end{theorem}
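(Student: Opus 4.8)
The plan is to pass, via duality, to a bilinear formulation of the boundedness and then to settle both implications by tent‑space estimates. For $f,g$ polynomials and $\mu$ compactly supported in $\Bn$, Fubini's theorem together with the reproducing identity $\int_{\Sn}\overline{g(\zeta)}(1-\langle\zeta,w\rangle)^{-n}\,d\sigma(\zeta)=\overline{g(w)}$ for the Cauchy--Szeg\H{o} kernel gives
\[
\langle Q_\mu f,g\rangle_{H^2}=\int_{\Bn}f(w)\,\overline{g(w)}\,d\mu(w).
\]
Since $1<q<\infty$, the Szeg\H{o} projection is bounded on $L^q(\Sn)$, so $(H^q)^*\cong H^{q'}$ (with $1/q+1/q'=1$) under the $H^2$--pairing, with the polynomials norming; hence $Q_\mu:H^p\to H^q$ is bounded with $\|Q_\mu\|_{H^p\to H^q}$ comparable to the norm of the bilinear form $(f,g)\mapsto\int_\Bn f\,\bar g\,d\mu$ on $H^p\times H^{q'}$, the compact-support assumption being removed at the end by the monotone approximation $\mu|_{\{|z|<\rho\}}\uparrow\mu$. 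The relevant exponents come from $\tfrac1p+\tfrac1{q'}=:\tfrac1s$ with $1<s<\infty$ and $r=s'$, i.e. $\tfrac1p+\tfrac1{q'}+\tfrac1r=1$, equivalently $r=pq/(p-q)$.

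For sufficiency I would invoke the elementary Fubini identity relating integrals over $\Bn$ to integrals over admissible regions (via $\sigma(\{\zeta:w\in\Gamma(\zeta)\})\asymp(1-|w|^2)^n$): for nonnegative $\varphi$ on $\Bn$,
\[
\int_{\Bn}\varphi\,d\mu\asymp\int_{\Sn}\int_{\Gamma(\zeta)}\varphi(w)\,(1-|w|^2)^{-n}\,d\mu(w)\,d\sigma(\zeta).
\]
Applying this to $\varphi=|f||g|$ and estimating $|f(w)|$, $|g(w)|$ by the admissible maximal functions $\mathcal Nf(\zeta)$, $\mathcal Ng(\zeta)$ for $w\in\Gamma(\zeta)$, the inner integral is at most $\mathcal Nf(\zeta)\,\mathcal Ng(\zeta)\,\widetilde\mu(\zeta)$. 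H\"older's inequality with exponents $p,q',r$, followed by the nontangential maximal theorem $\|\mathcal Nf\|_{L^p(\Sn)}\lesssim\|f\|_{H^p}$, yields $|\langle Q_\mu f,g\rangle_{H^2}|\lesssim\|\widetilde\mu\|_{L^r}\|f\|_{H^p}\|g\|_{H^{q'}}$, hence $\|Q_\mu\|_{H^p\to H^q}\lesssim\|\widetilde\mu\|_{L^r}$; the choice of aperture is immaterial here, as noted earlier.

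The necessity is the main obstacle. Testing $Q_\mu$ on a single normalized Cauchy kernel only recovers the $1$-Carleson condition $\widetilde\mu\in L^\infty$, which is strictly weaker than $\widetilde\mu\in L^r$ when $q<p$, because $\widetilde\mu$ aggregates contributions over all scales inside $\Gamma(\zeta)$; one must therefore test on functions spread over a whole lattice. Concretely, fix a separated lattice $\{a_k\}\subset\Bn$ with associated Whitney tiling $\{Q_k\}$ and boundary pieces $I_k\subset\Sn$ ($1-|a_k|^2\asymp$ the size of $Q_k$, $\sigma(I_k)\asymp(1-|a_k|^2)^n$), and form
\[
f=\sum_k\eps_k\,\alpha_k\,(1-|a_k|^2)^{M-n/p}(1-\langle\cdot,a_k\rangle)^{-M},\qquad g=\sum_k\eps_k\,\beta_k\,(1-|a_k|^2)^{M-n/q'}(1-\langle\cdot,a_k\rangle)^{-M},
\]
with the \emph{same} Rademacher signs $\eps_k$, nonnegative $\alpha_k,\beta_k$, and $M$ large. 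Because the kernels occurring in $f$ and $\bar g$ are conjugate‑reciprocal, the diagonal products are nonnegative and of size $\asymp(1-|a_k|^2)^{-n/s}$ on $Q_k$; taking expectations in $\int_\Bn f\,\bar g\,d\mu$ annihilates the off‑diagonal terms, while Khinchine's inequality identifies $\mathbb E\|f\|_{H^p}^p$ and $\mathbb E\|g\|_{H^{q'}}^{q'}$ with (discrete) tent‑space expressions in $(\alpha_k)$ and $(\beta_k)$.

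Feeding this into the boundedness of the bilinear form and then optimizing over $(\alpha_k)$ and $(\beta_k)$ through the duality between the pertinent tent spaces produces exactly $\|\widetilde\mu\|_{L^r}\lesssim\|Q_\mu\|_{H^p\to H^q}$, once one records the comparison $\widetilde\mu(\zeta)\asymp\sum_{k:\zeta\in I_k}(1-|a_k|^2)^{-n}\mu(Q_k)$. The delicate points are: controlling the argument of the non‑positive Cauchy kernel so that the diagonal contributions may be taken essentially real and positive (which forces $M$ large and the lattice fine); estimating the $H^p$‑ and $H^{q'}$‑norms of the random sums via Khinchine and Rudin--Forelli type estimates; and the bookkeeping for the tent‑space duality, which is where the machinery from the earlier sections is used. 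Equivalently, this step can be summarized as the assertion that $\widetilde\mu\in L^r$ is precisely the condition that $\mu$ be a Carleson measure for the embedding of the relevant tent space into $L^1(\mu)$, combined with the atomic decomposition of tent spaces.
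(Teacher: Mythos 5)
Your sufficiency argument is essentially the paper's (the paper applies the maximal function to the product $fg\in H^{r'}$ rather than to $f$ and $g$ separately, but that is cosmetic), and your bilinear randomization for the necessity is an attractive variant: by pairing two random sums with the \emph{same} signs you make the diagonal terms $h_k\overline{\tilde h_k}=(1-|a_k|^2)^{2M-n/p-n/q'}|1-\langle w,a_k\rangle|^{-2M}$ automatically positive (no argument control is needed), and taking expectations kills the off-diagonal terms; this would let you bypass the paper's area-function description of $H^q$ (Proposition \ref{FracArea}), Kahane's inequality, and the three-way case split on $q$ versus $2$. However, two steps you treat as bookkeeping are where the real difficulty lies, and one of them as stated is wrong.

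First, the discrete-to-continuous passage. Your testing produces, at best, a bound on $\sup_{a_k\in\Gamma(\zeta)}\nu_k$ in $L^r(\Sn)$, where $\nu_k=\mu(Q(a_k))(1-|a_k|^2)^{-n}$ and $Q(a_k)$ is a Carleson box --- i.e.\ a $T^r_\infty$ condition. But $\widetilde{\mu}(\zeta)=\int_{\Gamma(\zeta)}(1-|z|^2)^{-n}d\mu(z)$ is comparable to a \emph{sum} of normalized tile masses over the cone, a $T^r_1$-type quantity. Your claimed comparison $\widetilde{\mu}(\zeta)\asymp\sum_{k:\zeta\in I_k}(1-|a_k|^2)^{-n}\mu(Q_k)$ cannot serve both roles: with Whitney tiles it identifies $\widetilde{\mu}$ with a sum you have not controlled, and with Carleson boxes the right-hand side grossly overcounts and the $\asymp$ is false. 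The sup over the cone does not pointwise dominate the integral over the cone, and bridging this gap is precisely the content of the paper's Lemma \ref{discrete}, a two-page stopping-time argument showing that the discrete maximal operator built from Carleson boxes controls the area operator in $L^p(\Sn)$. Without a substitute for that lemma your proof is incomplete at its most essential point. Second, the "optimization over $(\alpha_k)$ and $(\beta_k)$ through tent-space duality'' requires either a factorization $T^s_1=T^p_2\cdot T^{q'}_2$ for the sequence tent spaces (which you do not prove) or, as in the paper, a choice of $\beta_k$ in terms of $\alpha_k$ and $\nu_k$ leading to a self-improving inequality of the form $\|\nu\|_{T^r_\infty}^{\beta+1}\lesssim\|\nu\|_{T^r_\infty}^{\beta-1}\|Q_\mu\|^2$; the latter only closes if one knows a priori that $\|\nu\|_{T^r_\infty}<\infty$, which is exactly why the compact-support reduction and the dilation argument $\mu_k^*$ appear in the paper at this stage rather than merely to justify the Fubini identity.
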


For $0<p<\infty$,  a compact operator $T$ acting on a separable Hilbert space $H$ belongs to the Schatten class $S_ p:=S_ p(H)$ if its sequence of singular numbers belongs to the sequence space $\ell^p$ (the singular numbers are the square roots of the eigenvalues of the positive operator $T^*T$, where $T^*$ is the Hilbert adjoint of $T$). We refer to \cite[Chapter 1]{Zhu} for a brief account on Schatten classes.

Our next main result (see Theorem \ref{t-SC} in section \ref{SSC}) is a complete characterization of the membership in the Schatten class $S_ p(H^2)$ of the Toeplitz type operator $Q_{\mu}$. One description is similar to the one obtained by Luecking \cite{Lue2} in the one dimensional case, and we also obtain another description in terms of a Berezin type transform.

The paper is organized as follows: first some background and preliminary results are given in section \ref{s2}. In section \ref{S3} we prove Theorem \ref{T2}, and section \ref{s4} is devoted to the proof of Theorem \ref{T3}. A description of the compactness of $Q_{\mu}:H^p\rightarrow H^q$ for $1<p,q<\infty$ is obtained in section \ref{sComp}, and in section \ref{SSC} a characterization of the membership of $Q_{\mu}$ in the Schatten ideal $S_ p(H^2)$ is provided. Finally, the last section contains some applications to weighted composition operators, Volterra type integration operators and Carleson embeddings.

Finally some words on the notation. For $1<p<\infty$, we let $p'$ to denote the conjugate exponent of $p$. We use $dv$ for the normalized volume measure on $\Bn$, and for $\alpha>-1$, we set $dv_{\alpha}(z)=c_{\alpha}(1-|z|^2)^{\alpha} dv(z)$, where $c_{\alpha}$ is a constant taken so that $v_{\alpha}(\Bn)=1$. The notation $a\lesssim b$ means that there is a finite positive constant $C$ with $a\leq C b$. Also, we use the notation $a\gtrsim b$ to indicate that $b\lesssim a$.

\section{Preliminaries}\label{s2}
In this section we collect some facts needed for the proofs of the main results.

\subsection{Admissible maximal and area functions}

For $\zeta \in \Sn$ and $\gamma>1$, recall that the admissible approach region $\Gamma_{\gamma}(\zeta)$ is defined as
\begin{displaymath}
\Gamma(\zeta)=\Gamma_{\gamma}(\zeta)=\left \{z\in \Bn: |1-\langle z,\zeta\rangle |<\frac{\gamma}{2} (1-|z|^2) \right \}.
\end{displaymath}
If $I(z)=\{\zeta \in \Sn: z\in \Gamma(\zeta)\}$, then $\sigma(I(z))\asymp (1-|z|^2)^{n}$, and it follows from Fubini's theorem that, for a positive function $\varphi$, and a finite positive measure $\nu$, one has
\begin{equation}\label{EqG}
\int_{\Bn} \varphi(z)\,d\nu(z)\asymp \int_{\Sn} \left (\int_{\Gamma(\zeta)} \varphi(z) \frac{d\nu(z)}{(1-|z|^2)^{n}} \right )d\sigma(\zeta).
\end{equation}
This fact will be used repeatedly throughout the paper. \\

For $\gamma>1$ and $f$ continuous on $\Bn$, the admissible maximal function $f^*_{\gamma}$ is defined on $\Sn$ by
\[ f^*(\zeta)=f_{\gamma}^*(\zeta)=\sup_{z\in \Gamma_{\gamma}(\zeta)} |f(z)|.\]
We need the following well known result on the $L^p$-boundedness of the admissible maximal function that can be found in \cite[Theorem 5.6.5]{Rud} or \cite[Theorem 4.24]{ZhuBn}.
\begin{otherth}\label{NTMT}
Let $0<p<\infty$ and $f\in H(\Bn)$. Then
\[ \|f^*\|_{L^p(\Sn)}\le C \|f\|_{H^p}.\]
\end{otherth}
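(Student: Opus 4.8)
The plan is to dominate a fixed power of $|f|$ by an invariant Poisson integral of an $L^{s}(\Sn)$ boundary function with $s>1$, and then to invoke the Hardy--Littlewood maximal theorem on the sphere. First I would fix $q=p/2$, so that $s:=p/q=2>1$, and set $v=|f|^{q}$. Since $f$ is holomorphic, $v$ is plurisubharmonic, hence subharmonic with respect to the invariant Laplacian of $\Bn$; and since $\sup_{0<r<1}\int_{\Sn}v(r\zeta)^{s}\,d\sigma(\zeta)=\|f\|_{H^{p}}^{p}<\infty$, the function $v$ lies in the associated subharmonic Hardy class. By the standard theory of harmonic majorants for functions subharmonic with respect to the invariant Laplacian (see \cite{Rud}), $v$ is then dominated by the invariant Poisson (Poisson--Szeg\H{o}) integral of its radial boundary values: writing $g(\eta):=|f(\eta)|^{q}$ for $\eta\in\Sn$, one has $g\in L^{s}(\Sn)$ with $\|g\|_{L^{s}(\Sn)}^{s}=\|f\|_{H^{p}}^{p}$ and
\[
|f(z)|^{q}\le \mathcal{P}[g](z):=\int_{\Sn}\frac{(1-|z|^{2})^{n}}{|1-\langle z,\eta\rangle|^{2n}}\,g(\eta)\,d\sigma(\eta),\qquad z\in\Bn.
\]
In the range relevant here ($p>1$) one can avoid fractional powers altogether: a holomorphic function in $H^{p}$ is annihilated by the invariant Laplacian, hence equals the Poisson--Szeg\H{o} integral of its boundary values (which lie in $L^{p}(\Sn)$), and one has $|f|\le\mathcal{P}[\,|f|\,]$ on $\Bn$ directly.

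Next I would establish the pointwise estimate $\big(\mathcal{P}[g]\big)^{*}(\zeta)\le C\,\mathcal{M}g(\zeta)$ for $\zeta\in\Sn$, where $\mathcal{M}$ is the Hardy--Littlewood maximal operator on $\Sn$ associated with the non-isotropic balls $B_{\delta}(\zeta)$. This is a routine kernel computation. If $z\in\Gamma_{\gamma}(\zeta)$ then $1-|z|^{2}\asymp|1-\langle z,\zeta\rangle|=:\delta$ (with constants depending on $\gamma$, since $|1-\langle z,\zeta\rangle|\ge\tfrac12(1-|z|^{2})$ always holds). Decompose $\Sn=B_{2\delta}(\zeta)\cup\bigcup_{k\ge1}\big(B_{2^{k+1}\delta}(\zeta)\setminus B_{2^{k}\delta}(\zeta)\big)$. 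On $B_{2\delta}(\zeta)$ the Poisson--Szeg\H{o} kernel is $\lesssim\delta^{-n}$, because $|1-\langle z,\eta\rangle|\ge 1-|z|\gtrsim\delta$, so the corresponding part of the integral is $\lesssim\delta^{-n}\int_{B_{2\delta}(\zeta)}g\,d\sigma\asymp\sigma(B_{2\delta}(\zeta))^{-1}\int_{B_{2\delta}(\zeta)}g\,d\sigma\le\mathcal{M}g(\zeta)$. On the $k$-th corona, the quasi-triangle inequality for $|1-\langle\cdot,\cdot\rangle|^{1/2}$ gives $|1-\langle z,\eta\rangle|\gtrsim 2^{k}\delta$, so the kernel is $\lesssim 2^{-2kn}\delta^{-n}$ while $\sigma(B_{2^{k+1}\delta}(\zeta))\asymp(2^{k}\delta)^{n}$, and that part is $\lesssim 2^{-kn}\mathcal{M}g(\zeta)$. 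Summing the geometric series $\sum_{k\ge0}2^{-kn}$ yields the claim, and taking the supremum over $z\in\Gamma_{\gamma}(\zeta)$ gives $(\mathcal{P}[g])^{*}\lesssim\mathcal{M}g$.

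Finally, $(\Sn,\{B_{\delta}\},\sigma)$ is a space of homogeneous type (the non-isotropic balls satisfy $\sigma(B_{2\delta}(\zeta))\lesssim\sigma(B_{\delta}(\zeta))$), so $\mathcal{M}$ is bounded on $L^{s}(\Sn)$ because $s=2>1$; combining this with the two previous steps and the identity $(f^{*})^{q}=(|f|^{q})^{*}\le(\mathcal{P}[g])^{*}$ gives
\[
\|f^{*}\|_{L^{p}(\Sn)}^{q}=\big\|(f^{*})^{q}\big\|_{L^{s}(\Sn)}\le\big\|(\mathcal{P}[g])^{*}\big\|_{L^{s}(\Sn)}\le C\,\|\mathcal{M}g\|_{L^{s}(\Sn)}\le C\,\|g\|_{L^{s}(\Sn)}=C\,\|f\|_{H^{p}}^{q},
\]
which is the desired estimate (the admissible maximal function being independent of the aperture $\gamma$ up to multiplicative constants, as recalled in the text). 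The one delicate point is the passage to a power $q<p$: one needs the exponent $s=p/q$ to exceed $1$ so that $\mathcal{M}$ is bounded on $L^{s}(\Sn)$, and the endpoint $q=p$ (that is, $L^{1}(\Sn)$) is genuinely false. I would also emphasize that the first step must use the invariant Poisson kernel rather than the ordinary harmonic one: the admissible (Kor\'anyi) regions $\Gamma_{\gamma}(\zeta)$ are too wide for non-tangential control of ordinary harmonic functions, and it is exactly through the subharmonicity of $|f|^{q}$ with respect to the invariant Laplacian that holomorphy of $f$ is used in an essential way. The bulk of a complete write-up is assembling the first step from the scalar theory of subharmonic functions in $\Bn$; the remaining two steps are standard.
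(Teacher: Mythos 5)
Your argument is correct, and it is essentially the proof found in the sources the paper cites for this statement (the paper itself offers no proof of Theorem \ref{NTMT}; it is quoted as a known result from \cite[Theorem 5.6.5]{Rud} and \cite[Theorem 4.24]{ZhuBn}): domination of $|f|^{p/2}$ by the invariant Poisson integral of an $L^2(\Sn)$ boundary function via $\mathcal{M}$-subharmonicity, the Kor\'anyi-type pointwise bound of the admissible maximal function of a Poisson--Szeg\H{o} integral by the non-isotropic Hardy--Littlewood maximal function, and the maximal theorem on the homogeneous space $(\Sn,\sigma)$. The only point to polish in a full write-up is the identification of the least $\mathcal{M}$-harmonic majorant of $|f|^{q}$ with $\mathcal{P}[\,|f^*|^{q}\,]$ (a priori one only gets domination by $\mathcal{P}[g]$ for a weak-$*$ limit $g$ of the dilates, with $\|g\|_{L^{s}}\le\liminf_{r\to1}\||f_r|^q\|_{L^s}$, which is all the estimate needs).
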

Another function we need is the admissible area function $A_{\gamma}f$ defined on $\Sn$ by
\[Af(\zeta)=A_{\gamma}f(\zeta)=\left ( \int_{\Gamma_{\gamma}(\zeta)} |Rf(z)|^2 \,(1-|z|^2)^{1-n}dv(z)\right )^{1/2}.\]
For a function $f$ holomorphic in $\Bn$, here $Rf$ denotes the radial derivative of $f$, that is,
\[Rf(z)= \sum_{k=1}^{n} z_ k \frac{\partial f}{\partial z_ k} (z),\qquad z=(z_ 1,\dots,z_ n)\in \Bn.\]

The following result \cite{AB, FS, P1, Pav1} characterizes the membership in the Hardy space in terms of the admissible area function.
\begin{otherth}\label{AreaT}
Let $0<p<\infty$ and $g\in H(\Bn)$. Then $g\in H^p$ if and only if $Ag\in L^p(\Sn)$. Moreover, if $g(0)=0$ then
\[ \|g\|_{H^p}\asymp \|Ag\|_{L^p(\Sn)}.\]
\end{otherth}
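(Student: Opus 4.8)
The plan is to prove the equivalence through the chain of comparisons
$$\|g\|_{H^p}\lesssim\|g^*\|_{L^p(\Sn)}\asymp\|Ag\|_{L^p(\Sn)}\lesssim\|g\|_{H^p},$$
where the two outer inequalities are already available and the middle one — the equivalence of the $L^p$ norms of the admissible area function and the admissible maximal function — is the real content. First I would normalize: since the radial derivative $R$ annihilates constants, $A(g-g(0))=Ag$, and $g\in H^p$ iff $g-g(0)\in H^p$, so I may assume $g(0)=0$ throughout. The trivial bound $\|g\|_{H^p}\le\|g^*\|_{L^p(\Sn)}$ holds because $r\zeta\in\Gamma_\gamma(\zeta)$ once $r$ is close to $1$, whence $|g(r\zeta)|\le g^*(\zeta)$; combined with Theorem \ref{NTMT} this reduces the whole statement to proving $\|Ag\|_{L^p(\Sn)}\asymp\|g^*\|_{L^p(\Sn)}$.

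Next I would dispose of the Hilbert-space case $p=2$ directly, as it anchors the general argument. Applying the Fubini identity \eqref{EqG} with $d\nu=dv$ and $\varphi(z)=|Rg(z)|^2(1-|z|^2)$ collapses the area integral to
$$\|Ag\|_{L^2(\Sn)}^2\asymp\int_{\Bn}|Rg(z)|^2(1-|z|^2)\,dv(z).$$
Expanding $g=\sum_k g_k$ into homogeneous parts and using $Rg=\sum_k k\,g_k$ together with the orthogonality of the $g_k$ on each sphere, the right-hand side becomes $\sum_k k^2\|g_k\|_{L^2(\Sn)}^2\int_0^1 r^{2k+2n-1}(1-r^2)\,dr$, and the elementary estimate $\int_0^1 r^{2k+2n-1}(1-r^2)\,dr\asymp k^{-2}$ yields $\sum_k\|g_k\|_{L^2(\Sn)}^2=\|g\|_{H^2}^2$. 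This is the Littlewood--Paley identity for the ball.

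For arbitrary $0<p<\infty$ I would establish the two-sided \emph{good-$\lambda$ inequalities} relating the distribution functions of $Ag$ and $g^*$ on $\Sn$, in the spirit of Burkholder--Gundy--Silverstein. For the inequality $\|Ag\|_{L^p}\lesssim\|g^*\|_{L^p}$ one decomposes the open set $\{g^*>\lambda\}\subset\Sn$ into a Whitney-type family $\{Q_j\}$ and, over the admissible \emph{tent} (sawtooth) region $T_j$ above each $Q_j$, controls the incremental area mass by the local energy estimate
$$\int_{T_j}|Rg(z)|^2(1-|z|^2)^{1-n}\,dv(z)\lesssim \lambda^2\,\sigma(Q_j),$$
valid because $g^*\le\gamma\lambda$ on a fixed dilate of $Q_j$. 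This local bound comes from Green's/Stokes' formula applied to the subharmonic function $|g|^2$ on the truncated region, using $\Delta|g|^2\gtrsim|\nabla g|^2\gtrsim|Rg|^2$, the boundary control $|g|\le\gamma\lambda$ furnished by the maximal function, and the measure comparison $\sigma(I(z))\asymp(1-|z|^2)^n$ from Section \ref{s2}. Summing over $j$ gives $\sigma(\{Ag>2\lambda,\ g^*\le\gamma\lambda\})\lesssim\gamma^2\,\sigma(\{g^*>\lambda\})$; multiplying by $\lambda^{p-1}$, integrating, and choosing $\gamma$ small enough to absorb the leading term produces $\|Ag\|_{L^p}\lesssim\|g^*\|_{L^p}$. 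The reverse inequality $\|g^*\|_{L^p}\lesssim\|Ag\|_{L^p}$ is symmetric, decomposing $\{Ag>\lambda\}$ instead and using a local version of the $L^2$ identity from the previous step as the comparison on each sawtooth.

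The main obstacle is precisely this local $L^2$ energy estimate and the geometry underpinning it: one must run Green's formula over truncated admissible regions, correctly absorb the weight $(1-|z|^2)^{1-n}$ against the factor $\sigma(Q_j)\asymp(1-|z|^2)^n$, and contend with the fact that the area and maximal functions interact only after an \emph{aperture change}, since the tent over $Q_j$ is seen by cones $\Gamma_\gamma$ of larger aperture than those defining $Ag$. This is exactly where the remark following the definition of $\Gamma_\gamma$ — that the $L^p$ norms of $Ag$ and of $g^*$ are, up to constants, independent of $\gamma>1$ — is needed, so I would establish that aperture-independence first, again by a covering argument comparing cones of different openings, allowing the good-$\lambda$ machinery to switch apertures freely. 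Assembling the steps, the equivalence $\|Ag\|_{L^p(\Sn)}\asymp\|g^*\|_{L^p(\Sn)}$ together with the reductions of the first paragraph yields both the characterization $g\in H^p\iff Ag\in L^p(\Sn)$ and, when $g(0)=0$, the norm comparison $\|g\|_{H^p}\asymp\|Ag\|_{L^p(\Sn)}$.
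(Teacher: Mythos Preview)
The paper does not prove Theorem~\ref{AreaT}; it is quoted as a known result with references to \cite{AB,FS,P1,Pav1}. Your outline is essentially the classical Fefferman--Stein good-$\lambda$ argument transported to the ball, which is the route taken in those references (especially \cite{AB,FS}), so in spirit you are reproducing the cited proof rather than offering an alternative.

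A few comments on the sketch itself. The displayed chain at the outset is slightly misstated: the rightmost bound $\|Ag\|_{L^p}\lesssim\|g\|_{H^p}$ is not an independent input but follows from Theorem~\ref{NTMT} together with the middle equivalence you then prove; your prose correctly reduces matters to $\|Ag\|_{L^p}\asymp\|g^*\|_{L^p}$, so this is only cosmetic. More substantively, calling the reverse direction $\|g^*\|_{L^p}\lesssim\|Ag\|_{L^p}$ ``symmetric'' undersells it: the local ingredient there is not Green's formula but a representation/subharmonicity estimate that bounds $|g|$ on a Carleson region by the $L^2$ mass of $(1-|z|)Rg$ over an enlarged region plus the value of $g$ at a point where $Ag$ is controlled, so the mechanism differs even though the good-$\lambda$ shell is the same. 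Finally, the absorption step in any good-$\lambda$ argument needs the absorbed quantity to be finite a priori; one handles this by working with the dilates $g_r(z)=g(rz)$ and letting $r\to 1^-$, a standard but necessary point worth making explicit.
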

As said before, all the results here are independent of the aperture $\gamma>1$ and, for that reason, from now on we omit it from the notation.\\
\mbox{}
\\
\textbf{Luecking's theorem:}
We will also need the following result essentially due to Luecking \cite{Lue1} (see also \cite{P1}) describing those positive Borel measures for which the embedding from $H^p$ into $L^s(\mu)$ is bounded when $s<p$.
\begin{otherth}\label{LT}
Let $0<s<p<\infty$ and let $\mu$ be a positive Borel measure on $\Bn$. Then the identity $I_ d:H^p\rightarrow L^s(\mu)$ is bounded, if and only if, the function defined on $\Sn$ by
\[ \widetilde{\mu}(\zeta)=\int_{\Gamma(\zeta)} (1-|z|^2)^{-n}d\mu(z) \]
belongs to $L^{p/(p-s)}(\Sn)$. Moreover, one has $\|I_d\|_{H^p\rightarrow L^s(\mu)}\asymp \|\widetilde{\mu}\|_{L^{p/(p-s)}(\Sn)}^{1/s}.$
\end{otherth}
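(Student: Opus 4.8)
The plan is to set $r=p/(p-s)$, the conjugate exponent of $p/s$, and prove the two implications separately. The sufficiency, together with one half of the norm comparison, is soft and follows from the admissible maximal function; the necessity is the genuinely hard direction and I would obtain it through a randomization (Khinchine) argument after discretizing.

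For the sufficiency, suppose $\widetilde{\mu}\in L^r(\Sn)$. Applying \eqref{EqG} with $\varphi(z)=|f(z)|^s$ and $\nu=\mu$, and bounding $|f(z)|\le f^*(\zeta)$ for $z\in\Gamma(\zeta)$, I would obtain
\[ \int_{\Bn}|f|^s\,d\mu\asymp\int_{\Sn}\Big(\int_{\Gamma(\zeta)}|f(z)|^s\frac{d\mu(z)}{(1-|z|^2)^n}\Big)d\sigma(\zeta)\le\int_{\Sn}f^*(\zeta)^s\,\widetilde{\mu}(\zeta)\,d\sigma(\zeta). \]
Hölder's inequality with exponents $p/s$ and $r$ bounds the last integral by $\|f^*\|_{L^p(\Sn)}^s\,\|\widetilde{\mu}\|_{L^r(\Sn)}$, and the admissible maximal theorem (Theorem \ref{NTMT}) gives $\|f^*\|_{L^p(\Sn)}\lesssim\|f\|_{H^p}$. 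Hence $\int_{\Bn}|f|^s\,d\mu\lesssim\|f\|_{H^p}^s\,\|\widetilde{\mu}\|_{L^r}$, which is exactly the boundedness of $I_d$ together with the estimate $\|I_d\|_{H^p\to L^s(\mu)}\lesssim\|\widetilde{\mu}\|_{L^r}^{1/s}$.

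For the necessity I would discretize. Fix a separated lattice $\{a_k\}$ in the Bergman metric whose balls $D_k=D(a_k,\rho)$ cover $\Bn$ with bounded overlap, and let $Q_k\subset\Sn$ be the shadow of $D_k$, so that $\sigma(Q_k)\asymp(1-|a_k|^2)^n$ and $(1-|z|^2)\asymp(1-|a_k|^2)$ on $D_k$. A tent-space type computation, using the identity $sr/p=r-1$, reduces the target to the discrete estimate $\|\widetilde{\mu}\|_{L^r(\Sn)}^r\lesssim\sum_k\mu(D_k)^r\,(1-|a_k|^2)^{-n(r-1)}$. To control the right-hand side, I would test $I_d$ against the randomized functions $f_t(z)=\sum_k\lambda_k\,r_k(t)\,g_{a_k}(z)$, where the $r_k$ are Rademacher functions, $\{\lambda_k\}\ge0$ is finitely supported, and $g_{a_k}(z)=(1-|a_k|^2)^{N-n/p}(1-\langle z,a_k\rangle)^{-N}$ is an $H^p$-normalized kernel with $N$ large. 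Applying the embedding to each $f_t$, integrating in $t$ over $[0,1]$, and using Khinchine's inequality pointwise in $z$ against $d\mu$ on the left (then the single dominant term on each $D_k$ together with bounded overlap) and pointwise on $\Sn$ on the right, I would arrive at
\[ \sum_k\lambda_k^s\,\frac{\mu(D_k)}{(1-|a_k|^2)^{ns/p}}\lesssim\|I_d\|^s\Big(\int_0^1\|f_t\|_{H^p}^p\,dt\Big)^{s/p}. \]

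The main obstacle is the upper estimate $\int_0^1\|f_t\|_{H^p}^p\,dt\lesssim\|\{\lambda_k\}\|_{\ell^p}^p$. After Khinchine this amounts to the square-function bound $\big\|(\sum_k\lambda_k^2|g_{a_k}|^2)^{1/2}\big\|_{H^p}\lesssim\|\{\lambda_k\}\|_{\ell^p}$, which for $p\le2$ follows immediately from the subadditivity of $t\mapsto t^{p/2}$ and $\|g_{a_k}\|_{H^p}\asymp1$, but for $p>2$ requires an additional duality argument (alternatively, the area-function characterization, Theorem \ref{AreaT}, combined with the almost orthogonality of the $g_{a_k}$). Granting this bound, boundedness of $I_d$ yields $\sum_k\lambda_k^s\,\mu(D_k)(1-|a_k|^2)^{-ns/p}\lesssim\|I_d\|^s\|\{\lambda_k\}\|_{\ell^p}^s$; optimizing over $\lambda\ge0$ by $\ell^{p/s}$--$\ell^r$ duality (pairing $\lambda_k^s$ against $\mu(D_k)(1-|a_k|^2)^{-ns/p}$) gives $\sum_k\mu(D_k)^r(1-|a_k|^2)^{-n(r-1)}\lesssim\|I_d\|^{sr}$, which by the discretization above is $\|\widetilde{\mu}\|_{L^r}\lesssim\|I_d\|^s$. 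This closes the necessity and completes the norm comparison.
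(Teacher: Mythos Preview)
The paper does not prove this statement; Theorem~C is quoted from Luecking \cite{Lue1} (and \cite{P1}) as a known preliminary, so there is no proof in the paper to compare against. Your sufficiency argument is the standard one and is correct.

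Your necessity argument, however, contains a genuine gap in the discretization step. The reduction you assert,
\[
\|\widetilde{\mu}\|_{L^r(\Sn)}^r\ \lesssim\ \sum_k\mu(D_k)^r\,(1-|a_k|^2)^{-n(r-1)},
\]
is false for $r>1$. Writing $\nu_k=\mu(D_k)(1-|a_k|^2)^{-n}$, the left side is (up to constants) the tent norm $\|\nu\|_{T^r_1}^r=\int_{\Sn}\big(\sum_{a_k\in\Gamma(\zeta)}\nu_k\big)^r\,d\sigma$, while the right side is $\|\nu\|_{T^r_r}^r\asymp\sum_k\nu_k^r(1-|a_k|^2)^n$; convexity gives only the reverse inequality $\|\nu\|_{T^r_r}\le\|\nu\|_{T^r_1}$. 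Concretely, set $\nu_k=1$ for all lattice points at dyadic levels $1,\dots,N$: each cone $\Gamma(\zeta)$ contains $\sim N$ of them, so $\|\nu\|_{T^r_1}^r\asymp N^r$, whereas $\|\nu\|_{T^r_r}^r\asymp N$. The same example shows that your test-function bound $\int_0^1\|f_t\|_{H^p}^p\,dt\lesssim\|\lambda\|_{\ell^p}^p$ fails for $p>2$: after Khinchine this is the embedding $T^p_p\hookrightarrow T^p_2$ (with the weight $(1-|a_k|)^{-n/p}$ absorbed), which does not hold when $p>2$.

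The repair is to work in the tent-space scale throughout rather than in $\ell^p$. Use Proposition~E to control the randomized test functions by a $T^p_2$ norm; the embedding together with Khinchine then pairs $\{\lambda_k^s\}$ against $\{\nu_k\}$ in the $T^{p/s}_1$--$T^r_\infty$ duality (Theorem~F), yielding $\{\mu(Q(a_k))(1-|a_k|)^{-n}\}\in T^r_\infty$ for the \emph{tents} $Q(a_k)$ (not the balls $D_k$). A stopping-time argument of the type in Lemma~\ref{discrete} is then needed to pass from this discrete tent-maximal control to $\widetilde{\mu}\in L^r(\Sn)$. The $\ell^p$--$\ell^r$ duality you invoke discards exactly the cone geometry required for this last step.
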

\mbox{}
\\
Finally, we will use the following integral estimate. It can be found in \cite{Ars} and \cite{Jev}.

\begin{otherl}\label{Gamma}
Let $0<s<\infty$ and $\lambda>n\max(1,1/s)$. If $\mu$ is a positive measure, then
$$\int_{\Sn}\left[\int_{\Bn}\left(\frac{1-|z|^2}{|1-\langle z,\zeta\rangle|}\right)d\mu(z)\right]^s d\sigma(\zeta)\leq C \int_{\Sn}\mu(\Gamma(\zeta))^s d\sigma(\zeta).$$
\end{otherl}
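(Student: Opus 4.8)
The plan is to reduce the inner integral to a one‑parameter family of ``cone functionals'' $\zeta\mapsto\mu(\Gamma_{\gamma}(\zeta))$ by a layer‑cake argument, and then to sum these up by means of a change‑of‑aperture inequality. Write $F_{\mu}(\zeta)=\int_{\Bn}\big(\tfrac{1-|z|^{2}}{|1-\langle z,\zeta\rangle|}\big)^{\lambda}\,d\mu(z)$, and let $\Gamma_{\gamma}(\zeta)$ denote the admissible approach region of aperture $\gamma$ (so that $\Gamma=\Gamma_{\gamma_{0}}$ for the fixed $\gamma_{0}$ used in the statement, the choice of which is immaterial up to constants). Since $|1-\langle z,\zeta\rangle|\ge\tfrac12(1-|z|^{2})$, the integrand is bounded by $2^{\lambda}$, and for $t>0$ the superlevel set $\{z\in\Bn:(1-|z|^{2})/|1-\langle z,\zeta\rangle|>t\}$ is exactly $\Gamma_{2/t}(\zeta)$; hence, integrating the distribution function,
\[
F_{\mu}(\zeta)=\lambda\int_{0}^{2}t^{\lambda-1}\,\mu\big(\Gamma_{2/t}(\zeta)\big)\,dt\ \asymp\ \sum_{l\ge 0}2^{-l\lambda}\,\mu\big(\Gamma_{2^{l}}(\zeta)\big).
\]
No hypothesis on $\lambda$ has been used yet.

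The second, and main, ingredient is the change‑of‑aperture estimate
\[
\int_{\Sn}\mu\big(\Gamma_{\gamma}(\zeta)\big)^{s}\,d\sigma(\zeta)\ \le\ C\,\gamma^{\,n\max(1,s)}\int_{\Sn}\mu\big(\Gamma(\zeta)\big)^{s}\,d\sigma(\zeta),\qquad \gamma\ge 1 .
\]
When $s=1$ this is immediate from Fubini's theorem, since $\int_{\Sn}\mu(\Gamma_{\gamma}(\zeta))\,d\sigma(\zeta)=\int_{\Bn}\sigma(\{\zeta:z\in\Gamma_{\gamma}(\zeta)\})\,d\mu(z)\asymp\gamma^{n}\int_{\Bn}(1-|z|^{2})^{n}\,d\mu(z)$, and the same identity with $\gamma=1$ evaluates the right‑hand side. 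For general $s$ I would argue as in the classical comparison of cones of different apertures for area and tent functionals: using a Whitney decomposition of $\Bn$, the portion of $\Gamma_{\gamma}(\zeta)$ of height $\sim 2^{-j}$ meets only $\lesssim\gamma^{n}$ Whitney pieces, whose shadows lie in the nonisotropic ball $B_{c\gamma 2^{-j}}(\zeta)$ and each of which is contained in $\Gamma_{\gamma_{0}}(\eta)$ for some boundary point $\eta$ within nonisotropic distance $\lesssim(\gamma 2^{-j})^{1/2}$ of $\zeta$; carrying this decomposition through the $L^{s}$‑norm, organised so that the decomposition over apertures — and not the one over heights — is the one fed into the triangle (resp.\ subadditivity) inequality, yields the factor $\gamma^{n\max(1,s)}$. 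The examples $\mu=\delta_{z_{0}}$ and $\mu=$ (normalised Lebesgue measure on a wide cone) show that the exponents $n$ (for $s\le 1$) and $ns$ (for $s\ge 1$) cannot be lowered. This is the technical heart of the argument and where I expect most of the work; I would be careful here precisely because the naive route — expanding $F_{\mu}$ over all Whitney pieces and applying $(\sum a_{i})^{s}\le\sum a_{i}^{s}$ to the full sum — is too lossy when $s<1$.

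Granting these two facts, the Lemma follows by summing a geometric series. If $s\ge 1$, Minkowski's inequality in $L^{s}(\Sn)$ gives
\[
\|F_{\mu}\|_{L^{s}(\Sn)}\ \lesssim\ \sum_{l\ge0}2^{-l\lambda}\,\big\|\mu(\Gamma_{2^{l}}(\cdot))\big\|_{L^{s}(\Sn)}\ \lesssim\ \Big(\sum_{l\ge0}2^{-l(\lambda-n)}\Big)\big\|\mu(\Gamma(\cdot))\big\|_{L^{s}(\Sn)},
\]
and the series converges since $\lambda>n=n\max(1,1/s)$. If $0<s<1$, the inequality $(\sum a_{l})^{s}\le\sum a_{l}^{s}$ gives instead
\[
\|F_{\mu}\|_{L^{s}(\Sn)}^{s}\ \lesssim\ \sum_{l\ge0}2^{-l\lambda s}\,\big\|\mu(\Gamma_{2^{l}}(\cdot))\big\|_{L^{s}(\Sn)}^{s}\ \lesssim\ \Big(\sum_{l\ge0}2^{-l(\lambda s-n)}\Big)\big\|\mu(\Gamma(\cdot))\big\|_{L^{s}(\Sn)}^{s},
\]
and the series converges since $\lambda s>n$, i.e.\ $\lambda>n/s=n\max(1,1/s)$. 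Thus the two regimes of the hypothesis $\lambda>n\max(1,1/s)$ correspond exactly to $s\ge1$ and $0<s<1$, and — apart from the change‑of‑aperture inequality, which is the one delicate point — the proof is only layer‑cake bookkeeping together with a geometric summation.
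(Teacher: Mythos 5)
The paper does not prove this lemma at all --- it is quoted from Arsenovi\'c \cite{Ars} and Jevti\'c \cite{Jev} --- so your proposal can only be judged on its own terms. Its skeleton is sound: the layer--cake identity $\int_{\Bn}u(z)^{\lambda}d\mu(z)\asymp\sum_{l\ge0}2^{-l\lambda}\mu(\Gamma_{2^{l}}(\zeta))$ with $u(z)=(1-|z|^2)/|1-\langle z,\zeta\rangle|$ is correct (and you rightly restored the exponent $\lambda$ missing from the statement), the claimed aperture dependence $\gamma^{n\max(1,s)}$ is the true sharp one (your two test measures do show it cannot be lowered), and the final geometric summation does reproduce exactly the two regimes of the hypothesis $\lambda>n\max(1,1/s)$. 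The problem is that you have relocated essentially the entire content of the lemma into the change--of--aperture inequality and then not proved it. The one sentence offered --- ``carrying this decomposition through the $L^{s}$-norm, organised so that the decomposition over apertures, and not the one over heights, is the one fed into the triangle (resp.\ subadditivity) inequality'' --- does not identify an actual estimate, and the step it gestures at is precisely where the difficulty sits. For $0<s<1$ the natural execution of your Whitney/lattice covering gives $\mu(\Gamma_{\gamma}(\zeta))^{s}\le\sum_{k}\mu(D_{k})^{s}\mathbf{1}\{D_{k}\cap\Gamma_{\gamma}(\zeta)\ne\emptyset\}$ and hence, after integrating in $\zeta$, the bound $\gamma^{n}\sum_{k}\mu(D_{k})^{s}(1-|a_{k}|^2)^{n}$; to close the argument one must then show $\sum_{k}\mu(D_{k})^{s}(1-|a_{k}|^2)^{n}\lesssim\int_{\Sn}\mu(\Gamma(\zeta))^{s}d\sigma(\zeta)$, and this is \emph{not} a pointwise consequence of subadditivity (for $s<1$ the inequality $\sum a_{k}^{s}\ge(\sum a_{k})^{s}$ points the wrong way; a string of point masses along a single radius makes the pointwise version fail). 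That comparison is a genuine, separately--proved lemma of Luecking type, and nothing in your sketch supplies it. So as written there is a gap exactly at the step you yourself flagged as the heart of the argument.

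Two remarks that would let you close the argument more economically. For $s>1$ no aperture change is needed at all: by duality, $\int_{\Sn}F_{\mu}\,g\,d\sigma=\int_{\Bn}(1-|z|^2)^{\lambda}\bigl(\int_{\Sn}|1-\langle z,\zeta\rangle|^{-\lambda}g(\zeta)\,d\sigma(\zeta)\bigr)d\mu(z)$, and since $\lambda>n$ a dyadic decomposition of the sphere around $z/|z|$ bounds the inner integral by $C(1-|z|^2)^{n-\lambda}Mg(z/|z|)$; then \eqref{EqG} and the $L^{s'}$-boundedness of the nonisotropic maximal function give $\int F_{\mu}g\,d\sigma\lesssim\|g\|_{L^{s'}}\|\mu(\Gamma(\cdot))\|_{L^{s}}$. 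For $0<s\le1$ the shortest route is to skip the aperture decomposition entirely: with an $r$-lattice, $F_{\mu}(\zeta)^{s}\le\sum_{k}\mu(D_{k})^{s}\sup_{D_{k}}u^{\lambda s}$, and integrating in $\zeta$ using $\int_{\Sn}|1-\langle a_{k},\zeta\rangle|^{-\lambda s}d\sigma(\zeta)\lesssim(1-|a_{k}|^2)^{n-\lambda s}$ (valid precisely because $\lambda s>n$) reduces everything to the single discrete comparison $\sum_{k}\mu(D_{k})^{s}(1-|a_{k}|^2)^{n}\lesssim\int_{\Sn}\mu(\Gamma(\zeta))^{s}d\sigma(\zeta)$ mentioned above, which must then be proved (or cited) as its own lemma. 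Either way, that discrete lemma or an honest proof of your aperture inequality is the missing ingredient.
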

\mbox{}
\\
\subsection{Separated sequences and lattices}

A sequence of points $\{z_ j\}\subset \Bn$ is said to be separated if there exists $\delta>0$ such that $\beta(z_ i,z_ j)\ge \delta$ for all $i$ and $j$ with $i\neq j$, where $\beta(z,w)$ denotes the Bergman metric on $\Bn$. This implies that there is $r>0$ such that the Bergman metric balls $D_ j=\{z\in \Bn :\beta(z,z_ j)<r\}$ are pairwise disjoint. \\

Let $D(a,r)=\{z\in \Bn : \beta(a,z)<r\}$ be the Bergman metric ball of radius $r>0$ centered at a point $a\in \Bn$. We need a well-known
result on decomposition of the unit ball $\Bn$.
By Theorem 2.23 in \cite{ZhuBn},
there exists a positive integer $N$ such that for any $0<r<1$ we can
find a sequence $\{a_k\}$ in $\Bn$ with the following properties:
\begin{itemize}
\item[(i)] $\Bn=\cup_{k}D(a_k,r)$.
\item[(ii)] The sets $D(a_k,r/4)$ are mutually disjoint.
\item[(iii)] Each point $z\in\Bn$ belongs to at most $N$ of the sets $D(a_k,4r)$.
\end{itemize}

Any sequence $\{a_k\}$ satisfying the above conditions  is called
an $r$-\emph{lattice}
(in the Bergman metric). Obviously any $r$-lattice is a separated sequence.\\


\subsection{Tent spaces of sequences}

For $0<p,q<\infty$ and a fixed separated sequence $Z=\{z_ j\}\subset \Bn$, let the tent space $T^p_ q=T^p_ q(Z)$ consist of those sequences $\lambda=\{\lambda_ j\}$ of complex numbers with
\[ \|\lambda \|_{T^p_ q}^p =\int_{\Sn} \!\!\Big (\!\!\sum_{z_ j\in \Gamma(\zeta)} \!|\lambda_ j |^q \Big )^{p/q} d\sigma(\zeta)  <\infty.\]

\begin{otherp}\label{TKL}
Let $Z=\{a_ j\}$ be a separated sequence in $\Bn$ and let $0<p<\infty$. If $b>n\max(1,2/p)$, then the operator $T_{Z}: T^p_ 2(Z)\rightarrow H^p$ defined by
$$T_{Z}(\{\lambda_ j\})=\sum_ j \lambda_ j \,\frac{(1-|a_ j|^2)^{b}}{(1-\langle z, a_ j  \rangle )^b}$$ is bounded.
\end{otherp}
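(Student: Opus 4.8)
The plan is to reduce matters, via the area-function characterisation of $H^p$ (Theorem \ref{AreaT}), to an estimate of exactly the shape handled by Lemma \ref{Gamma}. Write $f=T_Z(\{\lambda_j\})=\sum_j\lambda_j g_{a_j}$ with $g_a(z)=(1-|a|^2)^b(1-\langle z,a\rangle)^{-b}$. Splitting $f=f(0)+(f-f(0))$ and applying Theorem \ref{AreaT} to the second summand reduces the claim to the two estimates $|f(0)|\lesssim\|\lambda\|_{T^p_2}$ and $\|Af\|_{L^p(\Sn)}\lesssim\|\lambda\|_{T^p_2}$. The first I would dispatch quickly: for $p\le 1$ by the elementary bound $\sup_j|\lambda_j|(1-|a_j|^2)^{n/p}\lesssim\|\lambda\|_{T^p_2}$ (restrict the defining integral of $\|\lambda\|_{T^p_2}$ to $I(a_j)$) together with $\sum_j(1-|a_j|^2)^{b-n/p}<\infty$; for $p>1$ by applying Theorem \ref{LT} to the (finite-rank) measures $\mu=\sum_j|\lambda_j|(1-|a_j|^2)^b\delta_{a_j}$, testing the embedding $I_d\colon H^q\to L^s(\mu)$ against the constant $1$ to obtain $\mu(\Bn)\lesssim\|\widetilde\mu\|_{L^{q/(q-s)}(\Sn)}$, choosing $q/(q-s)=p$, and bounding $\widetilde\mu(\zeta)=\sum_{a_j\in\Gamma(\zeta)}|\lambda_j|(1-|a_j|^2)^{b-n}\le\big(\sum_{a_j\in\Gamma(\zeta)}|\lambda_j|^2\big)^{1/2}\big(\sum_{a_j\in\Gamma(\zeta)}(1-|a_j|^2)^{2(b-n)}\big)^{1/2}$, where the last factor is bounded uniformly in $\zeta$ because $b>n$ and $Z$ is separated. (Throughout one first works with finite sums and then lets the number of terms go to infinity.)

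For the area term, differentiate termwise, $|Rf(w)|\lesssim\sum_j|\lambda_j|(1-|a_j|^2)^b|1-\langle w,a_j\rangle|^{-b-1}$, and linearise the square appearing in $Af(\zeta)^2$ by a weighted Cauchy–Schwarz inequality: with the weight $(1-|a_j|^2)^{b+\eps}|1-\langle w,a_j\rangle|^{-b-1-\eps}$ ($\eps>0$ small), the complementary ``dummy'' sum $\sum_j(1-|a_j|^2)^{b-\eps}|1-\langle w,a_j\rangle|^{-b-1+\eps}$ is controlled — using that $Z$ is separated, so the summand is essentially constant on the disjoint Bergman balls $D(a_j,r)$ of volume $\asymp(1-|a_j|^2)^{n+1}$ — by $\int_{\Bn}(1-|v|^2)^{b-\eps-n-1}|1-\langle w,v\rangle|^{-b-1+\eps}dv(v)\asymp(1-|w|^2)^{-1}$, valid because $b>n$. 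This gives the pointwise bound $|Rf(w)|^2\lesssim(1-|w|^2)^{-1}\sum_j|\lambda_j|^2(1-|a_j|^2)^{b+\eps}|1-\langle w,a_j\rangle|^{-b-1-\eps}$.

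Inserting this into $Af(\zeta)^2=\int_{\Gamma(\zeta)}|Rf(w)|^2(1-|w|^2)^{1-n}dv(w)$, interchanging sum and integral, and using that $1\lesssim\big((1-|w|^2)/|1-\langle w,\zeta\rangle|\big)^{\lambda}$ on $\Gamma(\zeta)$ to enlarge the domain of integration to all of $\Bn$, each term becomes an integral with one kernel singularity at $\zeta\in\Sn$ and one at $a_j\in\Bn$; since $b>n\max(1,2/p)$ allows one to fix $\lambda$ with $n\max(1,2/p)<\lambda<b+\eps$, this integral falls in the regime where it is $\asymp(1-|a_j|^2)^{\lambda-b-\eps}|1-\langle\zeta,a_j\rangle|^{-\lambda}$, and altogether
\[
Af(\zeta)^2\lesssim\sum_j|\lambda_j|^2\Big(\frac{1-|a_j|^2}{|1-\langle\zeta,a_j\rangle|}\Big)^{\lambda}\asymp\int_{\Bn}\Big(\frac{1-|w|^2}{|1-\langle\zeta,w\rangle|}\Big)^{\lambda}d\nu(w),
\]
with $d\nu=\sum_j|\lambda_j|^2(1-|a_j|^2)^{-n-1}\mathbf 1_{D(a_j,r)}\,dv$ the smeared measure (so $\nu(D(a_j,r))\asymp|\lambda_j|^2$). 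Lemma \ref{Gamma} with $s=p/2$ — whose hypothesis $\lambda>n\max(1,1/s)=n\max(1,2/p)$ holds by the choice of $\lambda$ — now yields $\|Af\|_{L^p}^p=\int_{\Sn}(Af(\zeta)^2)^{p/2}d\sigma\lesssim\int_{\Sn}\nu(\Gamma(\zeta))^{p/2}d\sigma\asymp\int_{\Sn}\big(\sum_{a_j\in\Gamma(\zeta)}|\lambda_j|^2\big)^{p/2}d\sigma=\|\lambda\|_{T^p_2}^p$, the last comparison holding because $\nu(\Gamma(\zeta))\lesssim\sum_{a_j\in\Gamma'(\zeta)}|\lambda_j|^2$ for a slightly wider aperture $\gamma'$, which changes the $L^p(\Sn)$-norm only by a constant.

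The genuinely delicate step is the bookkeeping in the third paragraph: the auxiliary exponents must be chosen so that simultaneously the Cauchy–Schwarz dummy sum collapses to a clean power of $1-|w|^2$, the two-kernel integral over $\Bn$ reproduces exactly $\big((1-|a_j|^2)/|1-\langle\zeta,a_j\rangle|\big)^{\lambda}$ with no residual factor of $1-|a_j|^2$, and $\lambda$ still exceeds $n\max(1,2/p)$; verifying that the single hypothesis $b>n\max(1,2/p)$ leaves enough slack for all three is the crux, everything else (the integral asymptotics, sums over a separated sequence versus integrals, aperture-independence) being routine. For $p\ge 2$, where $p/2\ge 1$, one could alternatively bypass the dummy-sum device: $Af(\zeta)\le\sum_j|\lambda_j|\,Ag_{a_j}(\zeta)$ by the triangle inequality in $\ell^2(L^2)$, after which a duality argument against $H^{p'}$ using a fractional-derivative area function gives the conclusion already for $b>n$; the route above has the advantage of treating the whole range $0<p<\infty$ at one stroke.
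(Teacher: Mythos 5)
The paper offers no proof of Proposition~\ref{TKL}; it only points to \cite{Ars, Jev, Lue1} and \cite{P1}, so there is no internal argument to compare against. Your proof is essentially the standard one from those references --- area-function characterization of $H^p$, termwise differentiation, Cauchy--Schwarz against a dummy kernel collapsed via Lemma~\ref{l2} (using $b>n$ and separation), a two-kernel Forelli--Rudin estimate to produce $\sum_j|\lambda_j|^2\bigl((1-|a_j|^2)/|1-\langle\zeta,a_j\rangle|\bigr)^{\lambda}$, and Lemma~\ref{Gamma} applied to the smeared measure $\nu$ --- and it is correct, including the constant-term bounds: for $p\le1$ the required convergence $\sum_j(1-|a_j|^2)^{b-n/p}<\infty$ does follow from $b>2n/p\ge n+n/p$, and for $p>1$ the uniform bound on $\sum_{a_j\in\Gamma(\zeta)}(1-|a_j|^2)^{2(b-n)}$ holds because a separated sequence meets each dyadic slice of $\Gamma(\zeta)$ in boundedly many points (though here the Fubini identity \eqref{EqG} plus $\|\widetilde\mu\|_{L^1}\le\|\widetilde\mu\|_{L^p}$ would be quicker than invoking Theorem~\ref{LT}).

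One technical remark on the step you yourself flag as the crux. With your choice $n\max(1,2/p)<\lambda<b+\eps$, the integral $\int_{\Bn}(1-|w|^2)^{\lambda-n}|1-\langle w,\zeta\rangle|^{-\lambda}|1-\langle w,a_j\rangle|^{-(b+1+\eps)}dv(w)$ lies in the regime where the kernel at the interior point is supercritical, i.e.\ $b+1+\eps>(\lambda-n)+n+1$, and the bound $(1-|a_j|^2)^{\lambda-b-\eps}|1-\langle\zeta,a_j\rangle|^{-\lambda}$ is the corresponding case of the estimate in \cite{OF} --- true, but \emph{not} the case recorded as Lemma~\ref{FRgeneral}, which assumes $s+n+1>t$. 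You can stay entirely within the quoted lemma, and dissolve the bookkeeping you worry about, by taking $\lambda>b+\eps$ instead: Lemma~\ref{FRgeneral} then yields the bound $C\,|1-\langle\zeta,a_j\rangle|^{-(b+\eps)}$, hence $Af(\zeta)^2\lesssim\sum_j|\lambda_j|^2\bigl((1-|a_j|^2)/|1-\langle\zeta,a_j\rangle|\bigr)^{b+\eps}$, and since $b+\eps>n\max(1,2/p)$ automatically, Lemma~\ref{Gamma} applies with no further tuning of exponents. (Note also that the displayed statement of Lemma~\ref{Gamma} has dropped the exponent $\lambda$ on the kernel; you are using the intended form, as the paper itself does later with ``$\theta$ big enough''.)
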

\begin{proof}
See for example \cite{Ars, Jev,Lue1} or \cite{P1}.
\end{proof}
The sequence space $T^p_{\infty}=T^p_{\infty}(Z)$ consist of those sequences $\lambda=\{\lambda_ j\}$ of complex numbers with $\sup_{a_ k\in \Gamma(\zeta)} |\lambda_ k|\in L^p(\Sn)$. Set
\[\|\lambda \|_{T^p_ {\infty}}^p =\int_{\Sn}\Big (\sup_{a_ k\in \Gamma(\zeta)} |\lambda_ k| \Big )^p d\sigma(\zeta).\]
\begin{otherth}\label{TTD1}
Let $1<p<\infty$. The dual of $T^p_ 1$ can be identified with $T^{p'}_{\infty}$ under the pairing
\[\langle \lambda ,\mu \rangle =\sum_ k \lambda _ k \,\overline{\mu_ k} (1-|a_ k|^2)^n.\]
Under the same pairing, for $1<q<\infty$, the dual of $T^p_ q$ can be identified with $T^{p'}_{q'}$.
\end{otherth}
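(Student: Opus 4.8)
The plan is to prove the two inclusions $T^{p'}_{q'}\hookrightarrow(T^p_q)^*$ and $(T^p_q)^*\hookrightarrow T^{p'}_{q'}$ (with $q'=\infty$ when $q=1$), the first being elementary and the second the substance of the theorem. For the easy inclusion, given $\lambda\in T^p_q$ and $\mu\in T^{p'}_{q'}$, apply the Fubini-type identity \eqref{EqG} to the measure $\sum_k|\lambda_k|\,|\mu_k|\,\delta_{a_k}$, together with $\sigma(I(a_k))\asymp(1-|a_k|^2)^n$, to get
\[
\sum_k|\lambda_k|\,|\mu_k|\,(1-|a_k|^2)^n\asymp\int_{\Sn}\Big(\sum_{a_k\in\Gamma(\zeta)}|\lambda_k|\,|\mu_k|\Big)\,d\sigma(\zeta);
\]
then H\"older's inequality for sums with exponents $(q,q')$ --- with $(1,\infty)$ when $q=1$ --- applied to the inner sum, followed by H\"older's inequality on $\Sn$ with exponents $(p,p')$, shows that the pairing converges absolutely and that $|\langle\lambda,\mu\rangle|\lesssim\|\lambda\|_{T^p_q}\|\mu\|_{T^{p'}_{q'}}$. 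Hence $\mu\mapsto\langle\,\cdot\,,\mu\rangle$ maps $T^{p'}_{q'}$ boundedly into $(T^p_q)^*$.

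For the converse I would first observe that each coordinate sequence $e_k$ lies in $T^p_q$ with $\|e_k\|^p_{T^p_q}\asymp(1-|a_k|^2)^n$, and that the finitely supported sequences are dense in $T^p_q$; here $p<\infty$ and $q<\infty$ are used, and one notes that the value $q=\infty$ appears only as a dual exponent, never as the space on which $L$ acts. Given $L\in(T^p_q)^*$, set $\mu_k:=\overline{L(e_k)}/(1-|a_k|^2)^n$, so that $L(\lambda)=\langle\lambda,\mu\rangle$ for every finitely supported $\lambda$; it then remains to prove $\|\mu\|_{T^{p'}_{q'}}\lesssim\|L\|$, and by monotone convergence --- applying the estimate to the restriction of $L$ to sequences supported on $\{1,\dots,N\}$ and letting $N\to\infty$ --- we may assume $\mu$ finitely supported.

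When $1<q<\infty$, the cleanest route is complementation. Identify $T^p_q$ isometrically with the closed subspace of ``$\zeta$-independent'' elements of the Lebesgue--Bochner space $L^p(\Sn;\ell^q(\Sigma_\zeta))$, $\Sigma_\zeta=\{k:a_k\in\Gamma(\zeta)\}$, via $\lambda\mapsto(\lambda_k\mathbf{1}_{I(a_k)}(\zeta))_k$, and verify that the averaging operator $(PV)_k:=\sigma(I(a_k))^{-1}\int_{I(a_k)}V_k\,d\sigma$ is a bounded projection onto it. Boundedness of $P$ is the crux: since $I(a_k)$ is comparable to a metric ball about any of its points, $|(PV)_k|\lesssim M\big(V_k\mathbf{1}_{I(a_k)}\big)(\zeta)$ whenever $\zeta\in I(a_k)$, so the claim reduces to the Fefferman--Stein vector-valued maximal inequality on the space of homogeneous type $(\Sn,|1-\langle\cdot,\cdot\rangle|^{1/2},\sigma)$ --- exactly where $1<p,q<\infty$ is needed. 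Once $T^p_q$ is a complemented subspace of $L^p(\Sn;\ell^q)$, passing to the adjoints of $P$ and of the inclusion identifies $(T^p_q)^*$ with $\operatorname{ran}(P^*)$, which is precisely the subspace of $\zeta$-independent elements of $\big(L^p(\Sn;\ell^q)\big)^*=L^{p'}(\Sn;\ell^{q'})$, i.e.\ with $T^{p'}_{q'}$; unwinding this (and absorbing the harmless diagonal rescaling between $\sigma(I(a_k))$ and $(1-|a_k|^2)^n$) yields $\|\mu\|_{T^{p'}_{q'}}\asymp\|L\|$, and the correct pairing.

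At the endpoint $q=1$, $q'=\infty$, the vector-valued maximal inequality is no longer available, and I would instead invoke the real-variable tent-space machinery: via an atomic decomposition of $T^p_1$ (a sequence supported on the indices over a nonisotropic ball $B$, with $\ell^1$-mass at most $\sigma(B)^{-1/p}$, is a $T^p_1$-atom of norm $\lesssim 1$, obtained from a Whitney/stopping-time decomposition of the super-level sets of $\zeta\mapsto\sum_{a_k\in\Gamma(\zeta)}|\lambda_k|$), or, equivalently, by reduction to the classical duality for the continuous Coifman--Meyer--Stein tent spaces on $\Bn$, one recovers that the function $\zeta\mapsto\sup_{a_k\in\Gamma(\zeta)}|\mu_k|$ defining the $T^{p'}_\infty$-norm must lie in $L^{p'}(\Sn)$ with norm $\asymp\|L\|$; this step is standard and can also be quoted from \cite{Ars,Jev,Lue1,P1}. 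The main obstacle throughout is thus the converse inclusion, and within it the boundedness of the ``tent averaging'' projection $P$ --- handled by Fefferman--Stein for $1<q<\infty$ and by the atomic argument for $q=1$ --- which is the only place where the reflexivity-type hypotheses on $p$ and $q$ are genuinely used.
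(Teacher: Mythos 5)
The paper offers no proof of this theorem at all: its entire ``proof'' is the citation \cite{Ars, Jev, Lue1}. So the relevant comparison is between your argument and the standard ones in those references. Your easy inclusion is correct: applying \eqref{EqG} to the measure $\sum_k|\lambda_k|\,|\mu_k|\,(1-|a_k|^2)^n\delta_{a_k}$ and then H\"older twice is exactly the usual computation. For $1<q<\infty$ your route --- the isometric embedding $\lambda\mapsto(\lambda_k\mathbf{1}_{I(a_k)})_k$ into $L^p(\Sn;\ell^q)$, boundedness of the tent-averaging projection via $|(PV)_k|\lesssim M\big(V_k\mathbf{1}_{I(a_k)}\big)(\zeta)$ for $\zeta\in I(a_k)$ together with the Fefferman--Stein vector-valued maximal inequality on the homogeneous-type space $(\Sn,|1-\langle\cdot,\cdot\rangle|^{1/2},\sigma)$, then passage to adjoints and the harmless rescaling $\sigma(I(a_k))\asymp(1-|a_k|^2)^n$ --- is complete and correct. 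This is the vector-valued/complementation approach rather than the stopping-time arguments of \cite{Lue1, Ars, Jev}; it treats all $1<p,q<\infty$ uniformly at the price of importing the maximal inequality on a space of homogeneous type.

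The genuine soft spot is the endpoint $q=1$, which is unfortunately the case the paper actually relies on (the duality between $T^{p/2}_1$ and $T^{p/(p-2)}_\infty$ is invoked repeatedly in Section 4). Fefferman--Stein fails for $q=1$, as you correctly note, but the replacement you sketch does not work as stated: atomic decompositions of tent spaces are a $p\le 1$ device, and for $p>1$ the Whitney/stopping-time decomposition of the level sets of $\zeta\mapsto\sum_{a_k\in\Gamma(\zeta)}|\lambda_k|$ produces pieces whose coefficients are only $\ell^p$-summable, so reassembling $\|\lambda\|_{T^p_1}$ (and hence getting the lower bound $\|\mu\|_{T^{p'}_\infty}\lesssim\|L\|$) requires the bounded-overlap estimates for those pieces --- which is precisely the content of the arguments in \cite{Lue1, Ars} that you are trying to bypass. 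The naive alternative of dualizing $L^{p'}$ against $L^p$ and bounding $\sum_{a_k\in\Gamma(\zeta)}(1-|a_k|^2)^{-n}\int_{E_k}g\,d\sigma$ by $Mg(\zeta)$ also fails, since that sum runs over infinitely many scales and disjointness of the $E_k$ only controls each scale separately. Falling back on \cite{Ars, Jev, Lue1} for this endpoint, as you ultimately do, leaves you no worse off than the paper itself, but it means the one instance of the theorem that the paper genuinely needs is still being taken on faith rather than proved.
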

\begin{proof}
Again see \cite{Ars, Jev,Lue1}.
\end{proof}

\subsection{Forelli-Rudin type estimates}
We need the following well known integral estimate that has become very useful in this area of analysis (see \cite[Theorem 1.12]{ZhuBn} for example).
\begin{otherl}\label{IctBn}
Let $t>-1$ and $s>0$. There is a positive constant $C$ such that
\[ \int_{\Bn} \frac{(1-|w|^2)^t\,dv(w)}{|1-\langle z,w\rangle |^{n+1+t+s}}\le C\,(1-|z|^2)^{-s}\]
for all $z\in \Bn$.
\end{otherl}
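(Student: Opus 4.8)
This is the classical Forelli--Rudin (Rudin) integral estimate, and the plan is to reduce it to one-variable computations by integrating in polar coordinates. First I would write $w=\rho\zeta$ with $\rho\in[0,1)$ and $\zeta\in\Sn$, so that $dv(w)=2n\,\rho^{2n-1}\,d\rho\,d\sigma(\zeta)$ and $1-\langle z,w\rangle=1-\langle \rho z,\zeta\rangle$. The integral on the left-hand side then becomes
\[
2n\int_0^1(1-\rho^2)^t\,\rho^{2n-1}\left(\int_{\Sn}\frac{d\sigma(\zeta)}{|1-\langle \rho z,\zeta\rangle|^{n+1+t+s}}\right)d\rho.
\]

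Next I would invoke the standard asymptotics for the inner sphere integral: for $c>0$ one has $\int_{\Sn}|1-\langle a,\zeta\rangle|^{-n-c}\,d\sigma(\zeta)\asymp(1-|a|^2)^{-c}$, uniformly in $a\in\Bn$. Applying this with $a=\rho z$ and $c=1+t+s$ (which is positive since $t>-1$ and $s>0$) gives that the inner integral is comparable to $(1-\rho^2|z|^2)^{-(1+t+s)}$. Using $\rho^{2n-1}\le 1$ and substituting $x=\rho^2$ then reduces the whole claim to the one-variable inequality
\[
\int_0^1\frac{(1-x)^t\,dx}{(1-|z|^2x)^{1+t+s}}\lesssim(1-|z|^2)^{-s},
\]
which is a special case of the elementary estimate that, for $b>-1$ and $c>0$,
\[
\int_0^1\frac{(1-x)^b\,dx}{(1-\lambda x)^{1+b+c}}\asymp(1-\lambda)^{-c}\qquad(0\le\lambda<1);
\]
this last estimate I would prove by the change of variable $y=1-x$, writing $1-\lambda x=(1-\lambda)+\lambda y$ and splitting the integral at the point $y\asymp 1-\lambda$, each piece being a direct power computation.

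An alternative route, yielding only the upper bound that is actually needed, bypasses the sphere asymptotics: fixing $z$ and setting $\delta=1-|z|^2$, decompose $\Bn$ into the tent $E_\delta=\{w\in\Bn:|1-\langle z,w\rangle|<\delta\}$ together with the dyadic annuli $E_{2^{k+1}\delta}\setminus E_{2^k\delta}$, $k\ge 0$. On each such piece $|1-\langle z,w\rangle|\gtrsim 2^k\delta$, while $\int_{E_{2^{k+1}\delta}}(1-|w|^2)^t\,dv(w)\lesssim(2^k\delta)^{\,n+1+t}$; hence the contribution of the $k$-th piece is $\lesssim(2^k\delta)^{-s}$, and summing the geometric series (legitimate because $s>0$) yields the bound $\lesssim\delta^{-s}\asymp(1-|z|^2)^{-s}$. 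In both routes the one genuinely non-trivial ingredient is the Carleson-box measure estimate $\sigma\{\zeta\in\Sn:|1-\langle z,\zeta\rangle|<\delta\}\asymp\delta^n$ --- equivalently $\int_{\{|1-\langle z,w\rangle|<\delta\}}(1-|w|^2)^t\,dv(w)\asymp\delta^{n+1+t}$ for $t>-1$ --- from which the sphere asymptotics themselves are usually derived via a dyadic splitting of $\Sn$; I expect setting up and exploiting that geometric/dyadic structure to be the main obstacle, whereas everything else is summation of geometric series and elementary beta integrals. Since the estimate is entirely standard, in practice one simply refers to \cite[Theorem 1.12]{ZhuBn}.
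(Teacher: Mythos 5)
The paper offers no proof of this lemma at all: it is stated as a quoted known result and the text simply points to \cite[Theorem 1.12]{ZhuBn}, exactly as your closing sentence anticipates. Both of the arguments you sketch are correct and standard. Your first route (polar coordinates reducing to the sphere integral $\int_{\Sn}|1-\langle a,\zeta\rangle|^{-n-c}\,d\sigma(\zeta)\asymp(1-|a|^2)^{-c}$, then a one-variable beta-type estimate) is essentially the architecture of Zhu's own proof, except that Zhu establishes the sphere asymptotics by expanding the kernel in a power series, integrating term by term against the orthogonal monomials, and applying Stirling's formula, whereas you propose to get them from the Carleson-box measure estimate via a dyadic splitting; both derivations are legitimate. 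Your second route (tents $E_{2^k\delta}$ around $z$ with $\delta=1-|z|^2$, the bound $\int_{E_r}(1-|w|^2)^t\,dv(w)\lesssim r^{n+1+t}$ for $t>-1$, and a geometric series using $s>0$) is a clean self-contained alternative that avoids the sphere integral entirely and makes transparent why each hypothesis is needed. One cosmetic point: after discarding the Jacobian factor you should bound $\rho^{2n-1}\le\rho$ (valid for $n\ge1$) rather than $\rho^{2n-1}\le 1$, so that the substitution $x=\rho^2$ produces exactly $\tfrac12\int_0^1(1-x)^t(1-|z|^2x)^{-(1+t+s)}\,dx$ without a spurious $x^{-1/2}$; as written the extra factor is still integrable, so nothing breaks, but the cleaner bound saves a line. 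No genuine gaps.
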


We also need the following well known discrete version of the previous lemma.

\begin{otherl}\label{l2}
Let $\{z_ k\}$ be a separated sequence in $\Bn$, and let $n<t<s$.
Then
$$
\sum_{k}\frac{(1-|z_ k|^2)^t}{|1-\langle z,z_ k \rangle |^s}\le C\,
(1-|z|^2)^{t-s},\qquad z\in \Bn.
$$
\end{otherl}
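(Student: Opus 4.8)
The plan is to reduce the discrete sum to the continuous Forelli--Rudin estimate of Lemma~\ref{IctBn} by comparing each term with its average over a Bergman metric ball. Fix $r>0$ and recall the standard facts that, for $w\in D(z_ k,r)$, one has $1-|w|^2\asymp 1-|z_ k|^2$ and $|1-\langle z,w\rangle|\asymp |1-\langle z,z_ k\rangle|$ uniformly in $z\in\Bn$ (with constants depending only on $n$ and $r$), together with the volume estimate $v(D(z_ k,r))\asymp (1-|z_ k|^2)^{n+1}$. These give, for every $k$,
\[
\frac{(1-|z_ k|^2)^t}{|1-\langle z,z_ k\rangle|^s}\asymp \int_{D(z_ k,r)}\frac{(1-|w|^2)^{t-n-1}}{|1-\langle z,w\rangle|^s}\,dv(w).
\]

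Next I would use the bounded overlap of a separated sequence: since $\{z_ k\}$ is separated there is $\delta>0$ with the balls $D(z_ k,\delta/4)$ pairwise disjoint, and a comparison of volumes (using $v(D(z_ k,\delta/4))\asymp v(D(w,r))$ whenever $z_ k\in D(w,r)$, since then $1-|z_k|^2\asymp 1-|w|^2$) shows that each point of $\Bn$ lies in at most $N=N(n,r,\delta)$ of the balls $D(z_ k,r)$. Summing the previous display over $k$ and invoking this finite overlap yields
\[
\sum_ k \frac{(1-|z_ k|^2)^t}{|1-\langle z,z_ k\rangle|^s}\lesssim \int_{\Bn}\frac{(1-|w|^2)^{t-n-1}}{|1-\langle z,w\rangle|^s}\,dv(w).
\]

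Finally I would apply Lemma~\ref{IctBn} with weight exponent $t-n-1$ and decay exponent $s-t$; these are admissible precisely because $t>n$ (so $t-n-1>-1$) and $s>t$ (so $s-t>0$), and since $n+1+(t-n-1)+(s-t)=s$ the lemma gives the bound $C(1-|z|^2)^{-(s-t)}=C(1-|z|^2)^{t-s}$, which is exactly what we want. The only point requiring a little care is the bounded overlap claim; the rest is a routine dictionary between sums over a separated sequence and integrals over $\Bn$, and the hypothesis $n<t<s$ is what makes the passage to Lemma~\ref{IctBn} legitimate, so I do not anticipate a genuine obstacle.
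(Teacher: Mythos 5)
Your proof is correct. The paper states this lemma without proof, citing it as well known, and your argument is precisely the standard one: the term-by-term comparison with averages over Bergman metric balls uses the correct equivalences ($1-|w|^2\asymp 1-|z_k|^2$, $|1-\langle z,w\rangle|\asymp|1-\langle z,z_k\rangle|$ uniformly in $z$, and $v(D(z_k,r))\asymp(1-|z_k|^2)^{n+1}$), the bounded-overlap argument for a separated sequence is sound, and the exponent bookkeeping in the final application of Lemma~\ref{IctBn} is exactly where the hypothesis $n<t<s$ enters. There is nothing to add.
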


The following more general version of Lemma \ref{IctBn} will also be needed. The proof can be found in \cite{OF}.

\begin{otherl}\label{FRgeneral}
Let $s>-1$, $s+n+1>r,t>0$, and $r+t-s>n+1$. For $a \in \Bn$ and $z\in \overline{\mathbb{B}}_ n$, one has
$$\int_{\Bn} \frac{(1-|w|^2)^s dv(w)}{|1-\langle z,w\rangle|^r |1-\langle a,w\rangle|^t} \leq C \frac{1}{|1-\langle z,a\rangle|^{r+t-s-n-1}}.$$
\end{otherl}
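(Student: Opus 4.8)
The plan is to run the classical argument for Forelli--Rudin type estimates: combine the quasi-triangle inequality for $|1-\langle z,w\rangle|$ with two elementary ``shell'' bounds for single-kernel integrals. Recall (see \cite{Rud} or \cite{ZhuBn}) that there is a constant $C_0\ge 1$ with
\[
|1-\langle z,a\rangle|\le C_0\big(|1-\langle z,w\rangle|+|1-\langle a,w\rangle|\big),\qquad z,a,w\in\overline{\mathbb{B}}_n.
\]
I would also record, via a routine shell decomposition of $\Bn$ into the sets $\{2^{-j-1}\le|1-\langle z,w\rangle|<2^{-j}\}$ (cf. Lemma \ref{IctBn}), that for $\tau>0$, uniformly in $z$,
\[
\int_{\{|1-\langle z,w\rangle|<\tau\}}\frac{(1-|w|^2)^s\,dv(w)}{|1-\langle z,w\rangle|^c}\lesssim \tau^{n+1+s-c}\ \ (0<c<n+1+s),
\]
\[
\int_{\{|1-\langle z,w\rangle|\ge\tau\}}\frac{(1-|w|^2)^s\,dv(w)}{|1-\langle z,w\rangle|^c}\lesssim \tau^{n+1+s-c}\ \ (c>n+1+s),
\]
and that $\int_{\Bn}(1-|w|^2)^s\,dv(w)<\infty$ because $s>-1$. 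I use the hypotheses in the form $s>-1$, $0<r<n+1+s$, $0<t<n+1+s$, $r+t>n+1+s$, and set $A:=r+t-s-n-1>0$.

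Fix $z,a$, write $\rho=|1-\langle z,a\rangle|$, pick $c_0\in(0,1/(2C_0))$, and split $\Bn=\Omega_2\cup\Omega_{1b}\cup\Omega_{1a}$, where $\Omega_2=\{|1-\langle z,w\rangle|<c_0\rho\}$, $\Omega_{1b}=\{|1-\langle z,w\rangle|\ge c_0\rho\}\cap\{|1-\langle a,w\rangle|<c_0\rho\}$, and $\Omega_{1a}=\{|1-\langle z,w\rangle|\ge c_0\rho\}\cap\{|1-\langle a,w\rangle|\ge c_0\rho\}$. On $\Omega_2$ the quasi-triangle inequality forces $|1-\langle a,w\rangle|\gtrsim\rho$, so bounding $|1-\langle a,w\rangle|^{-t}\lesssim\rho^{-t}$ and using the first shell bound with $c=r$ gives $\int_{\Omega_2}\lesssim\rho^{-t}\rho^{n+1+s-r}=\rho^{-A}$. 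On $\Omega_{1b}$ one bounds $|1-\langle z,w\rangle|^{-r}\le(c_0\rho)^{-r}$ and uses the first shell bound in the variable $a$ with $c=t$, again getting $\rho^{-r}\rho^{n+1+s-t}=\rho^{-A}$; both cases use $r,t<n+1+s$.

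The one delicate piece is $\Omega_{1a}$, where each kernel is only bounded below by $c_0\rho$. Here I would fix $\delta\in(0,A)$ and write
\[
|1-\langle z,w\rangle|^{-r}=|1-\langle z,w\rangle|^{-(A-\delta)}\,|1-\langle z,w\rangle|^{-(n+1+s-t+\delta)},
\]
bound the first factor by $(c_0\rho)^{-(A-\delta)}$ on $\Omega_{1a}$ (valid since $A-\delta\ge0$), and then split the remaining integral over $\Omega_{1a}$ according to whether $|1-\langle z,w\rangle|\le|1-\langle a,w\rangle|$ or not, replacing in each part the larger kernel by the smaller. The exponents then add up to $(n+1+s-t+\delta)+t=n+1+s+\delta>n+1+s$, and the surviving kernel is $\ge c_0\rho$ throughout $\Omega_{1a}$, so the second shell bound controls each part by a constant times $\rho^{-\delta}$; hence $\int_{\Omega_{1a}}\lesssim\rho^{-(A-\delta)}\rho^{-\delta}=\rho^{-A}$. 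Summing the three contributions completes the argument.

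I expect $\Omega_{1a}$ to be the only real obstacle. The naive choice $\delta=0$ would leave a residual integral whose kernel exponents sum to exactly $n+1+s$; that integral is logarithmically divergent and yields an unabsorbable factor $\log(1/\rho)$. The fix is to perturb by $\delta>0$ and, crucially, to keep the integration confined to the region where both kernels exceed $c_0\rho$, which replaces the otherwise uncontrolled $(1-|z|^2)^{-\delta}$ by the harmless $\rho^{-\delta}$. Tracking which exponents fall on which side of the critical value $n+1+s$ is precisely where the conditions $0<r,t<n+1+s$ and $r+t>n+1+s$ get used.
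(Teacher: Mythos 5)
Your argument is correct, and it is worth noting that the paper does not actually prove Lemma~\ref{FRgeneral} at all --- it only cites Ortega--F\'abrega \cite{OF}, where the estimate is obtained by the more traditional route of reducing to integrals over spheres and invoking the one--kernel Forelli--Rudin asymptotics. Your proof replaces that machinery by a single input, the uniform Carleson--box bound $\int_{\{|1-\langle z,w\rangle|<\tau\}}(1-|w|^2)^s\,dv(w)\lesssim \tau^{n+1+s}$ for $z\in\overline{\mathbb{B}}_n$ and $s>-1$, from which both dyadic shell estimates follow; everything else is the quasi--triangle inequality for $|1-\langle z,\cdot\rangle|$ and bookkeeping. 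The decomposition into $\Omega_2,\Omega_{1b},\Omega_{1a}$ is sound: on $\Omega_2$ the quasi--triangle inequality does force $|1-\langle a,w\rangle|\gtrsim\rho$ (this is where $c_0<1/(2C_0)$ is used), and the exponent counts $\rho^{-t}\rho^{n+1+s-r}=\rho^{-r}\rho^{n+1+s-t}=\rho^{-A}$ are right. You correctly identified $\Omega_{1a}$ as the delicate region, and your $\delta$--perturbation is exactly what is needed: the factorization $|1-\langle z,w\rangle|^{-r}=|1-\langle z,w\rangle|^{-(A-\delta)}|1-\langle z,w\rangle|^{-(n+1+s-t+\delta)}$ is legitimate ($A-\delta\geq 0$ and $n+1+s-t+\delta>0$ follow from $\delta\in(0,A)$ and $t<n+1+s$), and restricting to the set where \emph{both} kernels exceed $c_0\rho$ before applying the supercritical shell bound is precisely what converts the would--be factor $(1-|z|^2)^{-\delta}$ into the harmless $\rho^{-\delta}$. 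What your approach buys is a self--contained, elementary proof that makes visible exactly where each hypothesis ($r,t>0$, $r,t<n+1+s$, $r+t>n+1+s$) enters; what the citation buys is brevity. The only step you should not leave implicit in a written version is the uniformity in $z\in\overline{\mathbb{B}}_n$ of the Carleson--box measure bound (it is standard --- reduce to $|1-\langle z/|z|,w\rangle|\lesssim\tau$ using $1-|z|\leq|1-\langle z,w\rangle|$ --- but it is not literally Lemma~\ref{IctBn}), together with the observation that $|1-\langle z,a\rangle|\geq 1-|a|>0$ since $a\in\mathbb{B}_n$, so $\rho>0$ and the three--way splitting is always meaningful.
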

\mbox{}
\\

\subsection{Differential type operators}
We need to use the differential and integral type operators $R^{\alpha,t}$ and $R_{\alpha,t}$ for $\alpha\ge -1$ and $t\ge 0$ (see \cite[Section 1.4]{ZhuBn}). Recall that $R^{\alpha,t}$  is the unique continuous linear operator on $H(\Bn)$ satisfying
\begin{displaymath}
R^{\alpha,t}\left (\frac{1}{(1-\langle z,w \rangle )^{n+1+\alpha}}\right )=\frac{1}{(1-\langle z,w \rangle )^{n+1+\alpha+t}}
\end{displaymath}
for all $w\in \Bn$. Similarly, $R_{\alpha,t}$  is the unique continuous linear operator on $H(\Bn)$ satisfying
\begin{displaymath}
R_{\alpha,t}\left (\frac{1}{(1-\langle z,w \rangle)^{n+1+\alpha+t}}\right )=\frac{1}{(1-\langle z,w \rangle )^{n+1+\alpha}}
\end{displaymath}
for all $w\in \Bn$. It is well-known that $$R_{\alpha,t}R^{\alpha,t}=R^{\alpha,t}R_{\alpha,t}=I_d.$$
Most of the time we use these operators as follows. If a holomorphic function $f$ in $\Bn$ has an integral representation
\[
f(z)=\int_{\Bn} \frac{d\nu(w)}{(1-\langle z,w \rangle)^{n+1+\alpha}},
\]
then
\[
R^{\alpha,t} f(z)=\int_{\Bn} \frac{d\nu(w)}{(1-\langle z,w \rangle)^{n+1+\alpha+t}}.
\]


\subsection{Khinchine and Kahane inequalities}  Consider a sequence of Rademacher functions $r_ k(t)$ (see \cite[Appendix A]{duren1}). For almost every $t\in (0,1)$ the sequence $\{r_ k(t)\}$ consists of signs $\pm 1$. We state first the classical Khinchine's inequality (see \cite[Appendix A]{duren1} for example).\\
\mbox{}
\\
\textbf{Khinchines's inequality:} Let $0<p<\infty$. Then for any sequence $\{c_k\}$ of complex numbers, we have
\begin{displaymath}
\left(\sum_k |c_k|^2\right)^{p/2}\asymp \int_0^1 \left|\sum_k c_kr_k(t)\right|^pdt.
\end{displaymath}
\mbox{}
\\
The next result is known as Kahane's inequality; see for instance Lemma 5 of Luecking \cite{Lue2} or the paper of Kalton \cite{Kal}.\\
\mbox{}
\\
\textbf{Kahane's inequality:} Let $X$ be a Banach space,  and $0<p,q<\infty$. For any sequence $\{x_ k\}\subset X$, one has
\begin{displaymath}
\left ( \int_{0}^1 \Big \|\sum_ k r_ k(t)\, x_ k \Big \|_{X}^q dt\right )^{1/q} \asymp \left ( \int_{0}^1 \Big \|\sum_ k r_ k(t)\, x_ k \Big \|_{X}^p dt\right )^{1/p}.
\end{displaymath}

\section{Proof of Theorem \ref{T2}}\label{S3}


\subsection{\textbf{Necessity}}
If $Q_{\mu}:H^p\rightarrow H^q$ is bounded, by the pointwise estimate for $H^p$-functions (see \cite[Theorem 4.17]{ZhuBn}) we get
\[
\big |Q_{\mu} f(z) \big  | \le \frac{1}{(1-|z|^2)^{n/q}} \,\big \| Q_{\mu} f \big \|_{H^q}.
\]
Taking the function $f$ to be the reproducing kernel $K_ z$ of $H^2$, that is,
\[
f (w)= K_ z (w)=(1-\langle w,z \rangle )^{-n}
\]
and taking into account that (an immediate application of \cite[Theorem 1.12]{ZhuBn})
\[
\|K_ z\|_{H^p}\lesssim (1-|z|^2)^{-n(p-1)/p},
\]
we obtain
\[
\begin{split}
\int_{\Bn} \frac{d\mu(w)}{|1-\langle z,w \rangle |^{2n}}&=\big |Q_{\mu} K_ z(z) \big  |
\\
&\le \frac{1}{(1-|z|^2)^{n/q}} \,\,\big \| Q_{\mu}\|_{H^p\rightarrow H^q} \cdot \|K_ z\|_{H^p}
\\
&\le \frac{C}{(1-|z|^2)^{n \big(1/q+(p-1)/p \big)}} \,\,\big \| Q_{\mu}\|_{H^p\rightarrow H^q}.
\end{split}
\]
Hence, from \eqref{sCM} with $t=n (\frac{1}{q}+\frac{(p-1)}{p} )$ we see that $\mu$ is an $s$-Carleson measure with $s=1+\frac{1}{p}-\frac{1}{q}$, and moreover
\[
\|\mu\|_{CM_ s}\le C \big \| Q_{\mu}\|_{H^p\rightarrow H^q}.
\]

\subsection{\textbf{Sufficiency}}

Let $1<p\le q<\infty$ and let $\mu$ be an $s$-Carleson measure with $s=1+\frac{1}{p}-\frac{1}{q}$. Take $t> 0$ satisfying
$n+t>ns$. By the density of the holomorphic polynomials and duality, it suffices to show that
\begin{equation}\label{E-B1}
I_ t(Q_{\mu}f,g):=\left |\int_{\Bn} R^{-1,t}(Q_{\mu}f)(z)\,\overline{R^{t-1,t}g(z)}\,dv_ {2t-1}(z) \right | \le C \|f\|_{H^p}\cdot \|g\|_{H^{q'}}
\end{equation}
for holomorphic polynomials $f$ and $g$ (it is easy to see that, for holomorphic polynomials $P$ and $Q$, one has  $\langle P,Q \rangle _{H^2}=\int_{\Bn} R^{-1,t} P(z)\overline{R^{t-1,t} Q(z)}\, dv_{2t-1}(z)$). Observe that $R^{t-1,t}g$ is also a holomorphic polynomial (this follows from the expression of $R^{t-1,t}g$ in terms of the homogeneous expansion of $g$. See \cite[Chapter 1] {ZhuBn}). This together with \eqref{sCM}, gives

\[
\begin{split}
\int _{\Bn}\left ( \int_{\Bn} \frac{|f(w)|\,d\mu(w)}{|1-\langle w,z\rangle |^{n+t}}\right ) &|R^{t-1,t}g(z)|\,dv_{2t-1}(z)\\
&\le \|f\|_{\infty}\cdot \|R^{t-1,t} g\|_{\infty} \int_{\Bn} \left (\int_{\Bn}\frac{d\mu(w)}{|1-\langle w,z \rangle |^{n+t}}\right ) dv_{2t-1}(z)\\
\\
& \lesssim \|f\|_{\infty}\cdot \|R^{t-1,t}g\|_{\infty} \cdot\|\mu\|_{CM_ s} \int_{\Bn} dv_{t-1+ns-n}(z)<\infty,
\end{split}
\]
 and we can use Fubini's theorem and the properties of the operators $R^{\beta,t}$ to get
\[
\begin{split}
\int_{\Bn} R^{-1,t}(Q_{\mu}f)(z)\,\overline{R^{t-1,t}g(z)}\,dv_{2t-1}(z)&=\int_{\Bn} \left ( \int_{\Bn} \frac{f(w)\,d\mu(w)}{(1-\langle z,w \rangle )^{n+t}}\right ) \overline{R^{t-1,t}g(z)}\,dv_{2t-1}(z)
\\
&=\int_{\Bn} f(w) \left (\int_{\Bn} \frac{\overline{R^{t-1,t}g(z)}\,dv_{2t-1}(z)}{(1-\langle z,w \rangle )^{n+t}} \right ) \,d\mu(w)
\\
& =\int_{\Bn} f(w)\,\overline{R_{t-1,t}R^{t-1,t}g(w)}d\mu(w)
\\
& =\int_{\Bn} f(w)\,\overline{g(w)} \, d\mu(w).
\end{split}
\]
This and H\"{o}lder's inequality give
\begin{equation}\label{E-B2}
I_ t(Q_{\mu}f,g) \le \int_{\Bn} |f(w)\,g(w)|\,d\mu(w)\le \|f\|_{L^{\sigma}(\mu)}\cdot \|g\|_{L^{\sigma'}(\mu)}
\end{equation}
with $\sigma=ps\ge p$.
As $\mu$ is a $(\sigma/p)$-Carleson measure, by Carleson-Duren's theorem we have
\[
\|f\|_{L^{\sigma}(\mu)} \le C_ 1 \big \|\mu \big \|^{1/\sigma}_{CM_ s}\cdot \|f\|_{H^p}.
\] Also,
\[
\frac{\sigma '}{q'}=\frac{ps(q-1)}{q(ps-1)}=\frac{ps(q-1)}{q(p-p/q)}=s.
\]
Therefore, we also have
\[
\|g\|_{L^{\sigma'}(\mu)}\le C_ 2 \big \|\mu \big \|^{1/\sigma'}_{CM_ s}\cdot \|g\|_{H^{q'}}.
\]
 Bearing in mind \eqref{E-B2}, we see that \eqref{E-B1} holds, proving that $Q_{\mu}:H^{p}\rightarrow H^q$ is bounded, and moreover
\[
\big \|Q_{\mu}\big \|_{H^p\rightarrow H^q} \lesssim \|\mu\|_{CM_ s}.
\]

\section{Proof of Theorem \ref{T3}}\label{s4}

\subsection{\textbf{Sufficiency}}

Suppose first that $\widetilde{\mu}$ belongs to $L^{r}(\Sn)$. Observe that $r>1$ so that by Luecking's theorem it follows that
$$\int_{\Bn} |f(z)|^s \,d\mu(z)\le C \|f\|_{H^p}^s$$
whenever $f$ is in $H^p$, with $s=p+1-\frac{p}{q}$. Testing this inequality on the functions
\[
f_ a(z)=\frac{1}{(1-\langle z,a \rangle )^{\sigma}},\qquad a\in \Bn,
\]
with $\sigma$ big enough, we see that $\mu$ is an $s/p$-Carleson measure. Note that, as $r>1$ and $q<p$, one has $0<s/p<1$. Let $t>0$ with $t+ns/p>n$ (observe that this implies $t+n>ns/p$ because $s/p<1$). As in the previous proof, and with the same notation as in \eqref{E-B1}, we need to show that $$|I_ t(Q_{\mu} f,g) | \le  C\, \|\widetilde{\mu}\|_{L^r(\Sn)}\cdot\|f\|_{H^p}\cdot \|g\|_{H^{q'}}$$
for holomorphic polynomials $f$ and $g$. Because $\mu$ is an $(s/p)$-Carleson measure, proceeding as before we can justify the use of Fubini's theorem that gives
\[
\begin{split}
\big | I_ t(Q_{\mu} f,g) \big | & \le \int_{\Bn} |f(z)| \,|g(z)|\,d\mu(z)\asymp \int_{\Sn} \int_{\Gamma(\zeta)}\frac{|fg(z)|}{(1-|z|^2)^{n}} d\mu(z)\,d\sigma(\zeta)\\
\\
&\le \int_{\Sn} |(fg)^*(\zeta)|\,\widetilde{\mu} (\zeta)\,d\sigma(\zeta) \le C \,\|\widetilde{\mu}\|_{L^r(\Sn)} \cdot \|fg \|_{H^{r'}}
\end{split}
\]
with $r'$ being the conjugate exponent of $r$ (the last inequality follows from H\"{o}lder's inequality and Theorem \ref{NTMT}). Finally,  H\"{o}lder's inequality gives $\|fg\|_{H^{r'}}\le \|f\|_{H^p}\cdot \|g\|_{H^{q'}}$ finishing the proof of the sufficiency.\\


\subsection{\textbf{Preliminaries for the necessity}}

Set $Q(0)=\Bn$, and given $w \in \Bn\setminus \{0\}$, we write $\zeta_w =w/|w|$ and denote
$$Q(w)=\big \{ z\in \Bn: |1-\langle z, \zeta_w \rangle |<1-|w| \big \}.$$

\begin{lemma}\label{LN1}
Let $w\in \Bn$. Then $1-|w|\asymp |1-\langle z,w \rangle |$ for $z\in Q(w)$.
\end{lemma}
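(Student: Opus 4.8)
\emph{Proof plan.} The plan is to reduce everything to elementary estimates on the inner product. If $w=0$ the statement is trivial, since then $Q(0)=\Bn$ and both sides equal $1$; so assume $w\neq 0$, write $\zeta_w=w/|w|$ and set $u=\langle z,\zeta_w\rangle$. Because $|w|$ is a positive real scalar, $\langle z,w\rangle=|w|\,u$, and the hypothesis $z\in Q(w)$ is precisely the statement $|1-u|<1-|w|$.

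For the upper estimate I would use the algebraic identity $1-\langle z,w\rangle=1-|w|u=(1-|w|)+|w|(1-u)$ together with the triangle inequality; this gives $|1-\langle z,w\rangle|\le(1-|w|)+|w|\,|1-u|<(1-|w|)+|w|(1-|w|)=(1+|w|)(1-|w|)\le 2(1-|w|)$, where in the middle step we used $z\in Q(w)$ and $w\neq 0$, and in the last step $|w|<1$. This is the only place the defining property of $Q(w)$ enters.

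For the reverse estimate no hypothesis beyond $z,w\in\Bn$ is needed: by Cauchy--Schwarz $|\langle z,w\rangle|\le|z|\,|w|\le|w|$, hence $|1-\langle z,w\rangle|\ge 1-|\langle z,w\rangle|\ge 1-|w|$ (equivalently, $\Re(1-\langle z,w\rangle)=1-|w|\,\Re u\ge 1-|w|$ since $\Re u\le|u|\le 1$). Combining the two bounds yields $1-|w|\le|1-\langle z,w\rangle|<2(1-|w|)$, which is the asserted comparability $1-|w|\asymp|1-\langle z,w\rangle|$ on $Q(w)$. There is no genuine obstacle here; the only points requiring a little care are pulling the scalar $|w|$ out of the inner product and the separate (immediate) treatment of $w=0$.
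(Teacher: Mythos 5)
Your proof is correct. It differs from the paper's in the mechanism used for the upper bound: the paper passes through the intermediate point $\zeta_w$ and invokes the quasi-triangle inequality $|1-\langle z,w\rangle|\lesssim |1-\langle z,\zeta_w\rangle|+|1-\langle \zeta_w,w\rangle|$ (a consequence of the fact that $d(a,b)=|1-\langle a,b\rangle|^{1/2}$ is a metric on $\overline{\Bn}$), then notes $|1-\langle \zeta_w,w\rangle|=1-|w|$; this yields the conclusion only up to an unspecified implicit constant. You instead pull the real scalar $|w|$ out of the inner product and use the exact algebraic identity $1-\langle z,w\rangle=(1-|w|)+|w|\bigl(1-\langle z,\zeta_w\rangle\bigr)$, so that the ordinary triangle inequality in $\mathbb{C}$ plus the defining condition of $Q(w)$ gives the clean bound $|1-\langle z,w\rangle|<(1+|w|)(1-|w|)\le 2(1-|w|)$. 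Your route is more elementary (it avoids the non-isotropic metric fact entirely) and produces an explicit constant; the paper's route is the one that generalizes when the intermediate point is not a scalar multiple of $w$, which is why that quasi-triangle inequality reappears elsewhere in the paper (e.g.\ in the proof of Lemma \ref{discrete}). The lower bound is handled identically (and is indeed trivial) in both arguments.
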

\begin{proof}
The result is trivial for $w=0$. Hence, assume that $w\neq 0$ and $z\in Q(w)$. Then
\begin{displaymath}
\begin{split}
|1-\langle z,w \rangle | &\lesssim |1-\langle z,\zeta_ w \rangle |+|1-\langle \zeta_ w,w \rangle |\\
\\
& \le (1-|w|)+|1-\langle \zeta_ w,w \rangle |=2(1-|w|).
\end{split}
\end{displaymath}
Since the other inequality is trivial, we are done.
\end{proof}

\begin{lemma}\label{discrete}
Let $1<p<\infty$, $\nu$ be a positive Borel measure, finite on compact sets, and consider the general area operator
$$A_\nu g(\zeta)=\left(\int_{\Gamma(\zeta)}|g(z)|^2 d\nu(z)\right)^{1/2}$$
and the discrete maximal operator
$$C_\nu^*(g)(\zeta)=\left(\sup_{a_k \in \Gamma(\zeta)}\frac{1}{(1-|a_k|)^n}\int_{Q(a_k)}(1-|z|^2)^n |g(z)|^2 d\nu(z)\right)^{1/2}
,$$
where $\{a_k\}$ is an $r$-lattice. Suppose that $g\in L^2(\Bn,d\nu_ n)$ with $d\nu_ n(z)=(1-|z|^2)^n d\nu(z)$. If $r$ is small enough and $C_\nu^*g \in L^p(\Sn)$, then $A_\nu g \in L^p(\Sn)$. Moreover,  $r>0$ can be chosen to guarantee that
$$\|A_\nu g\|_{L^p(\Sn)}\lesssim \|C_\nu^*g\|_{L^p(\Sn)}+\|g\|_{L^2(\nu_ n)}.$$
\end{lemma}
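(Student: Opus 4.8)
The plan is to argue by the standard good-$\lambda$ method; the single lattice point $a_{k_0}$ produced below is what links the area integral to $C_\nu^*g$. First, by \eqref{EqG} applied (with $\varphi\equiv1$) to the finite measure $|g|^2\,d\nu_n$, one has $\|A_\nu g\|_{L^2(\Sn)}\asymp\|g\|_{L^2(\nu_n)}$; in particular $A_\nu g<\infty$ $\sigma$-a.e., and since $\sigma(\Sn)=1$ the asserted inequality is trivial when $1<p\le2$ (H\"older). So we may assume $p>2$. If $g=0$ in $L^2(\nu_n)$ there is nothing to prove; otherwise set $\lambda_0:=C_1\|g\|_{L^2(\nu_n)}$ with $C_1$ so large that, by Chebyshev and the comparison above, $\sigma(\{A_\nu g>\lambda\})<\tfrac12$ for all $\lambda\ge\lambda_0$.

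The core is the good-$\lambda$ inequality: there exist $A>1$ and $C>0$, depending only on $n$ and the aperture, so that for all $\lambda\ge\lambda_0$ and all $0<\eta<1$,
\begin{equation}\label{goodlambda}
\sigma\big(\{A_\nu g>A\lambda\}\cap\{C_\nu^* g\le\eta\lambda\}\big)\le C\eta^2\,\sigma\big(\{A_\nu g>\lambda\}\big).
\end{equation}
To prove it, fix $\lambda\ge\lambda_0$ and Whitney-decompose the open proper subset $O_\lambda=\{A_\nu g>\lambda\}$ of $\Sn$ into non-isotropic balls $B_i=B_{\delta_i}(\zeta_i)\cap\Sn$ of bounded overlap, each with a fixed dilate $\kappa_0B_i$ meeting $\Sn\setminus O_\lambda$; fix $\zeta_i^*\in\kappa_0B_i$ with $A_\nu g(\zeta_i^*)\le\lambda$. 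For $\zeta\in B_i$ split $A_\nu g(\zeta)^2=S_1(\zeta)+S_2(\zeta)$, where $S_1$ integrates $|g|^2\,d\nu$ over the ``top'' $\Gamma(\zeta)\cap\{1-|z|^2<\kappa\delta_i\}$ and $S_2$ over the ``deep'' complement, $\kappa$ a large constant. The quasi-triangle inequality $|1-\langle z,w\rangle|\lesssim|1-\langle z,u\rangle|+|1-\langle u,w\rangle|$ gives $\Gamma(\zeta)\cap\{1-|z|^2\ge\kappa\delta_i\}\subset\Gamma'(\zeta_i^*)$ for a suitably enlarged aperture $\Gamma'$, whence $S_2(\zeta)\le\int_{\Gamma'(\zeta_i^*)}|g|^2\,d\nu\lesssim A_\nu g(\zeta_i^*)^2\le\lambda^2$ by the aperture-independence of the $L^p$-theory of the area function (of the same nature as Theorem \ref{AreaT}). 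Choosing $A$ so large that $(A\lambda)^2$ exceeds twice this bound, we get $\{\zeta\in B_i:A_\nu g(\zeta)>A\lambda\}\subset\{\zeta\in B_i:S_1(\zeta)>\tfrac12(A\lambda)^2\}$.

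It remains to control $S_1$ on $B_i$, and here $C_\nu^*g$ enters. By Fubini and $\sigma(\{\zeta:z\in\Gamma(\zeta)\})\asymp(1-|z|^2)^n$,
\[\int_{B_i}S_1(\zeta)\,d\sigma(\zeta)\ \lesssim\ \int_{\widehat{T}_i}|g(z)|^2(1-|z|^2)^n\,d\nu(z)\ =\ \int_{\widehat{T}_i}|g|^2\,d\nu_n,\]
where $\widehat{T}_i=\{z:1-|z|^2<\kappa\delta_i,\ |1-\langle z,\zeta_i\rangle|<C_0\delta_i\}$ is the tent over $B_i$. The key geometric fact — and this is exactly where the smallness of $r$ is used — is that there is a point $a_{k_0}$ of the $r$-lattice with $1-|a_{k_0}|\asymp\delta_i$, with $\widehat{T}_i\subset Q(a_{k_0})$, and with $a_{k_0}\in\Gamma(\zeta)$ for every $\zeta\in B_i$: one takes $a_{k_0}$ close to $(1-C'\delta_i)\zeta_i$ for a suitable large $C'$ and checks the three conditions using Lemma \ref{LN1} and the quasi-triangle inequality, the inclusion $\widehat{T}_i\subset Q(a_{k_0})$ forcing $r$ small. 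Writing $b_k=(1-|a_k|)^{-n}\int_{Q(a_k)}(1-|z|^2)^n|g|^2\,d\nu$, so that $b_k\le C_\nu^*g(\zeta)^2$ whenever $a_k\in\Gamma(\zeta)$, we get for any $\zeta\in B_i$
\[\int_{\widehat{T}_i}|g|^2\,d\nu_n\ \le\ \int_{Q(a_{k_0})}(1-|z|^2)^n|g|^2\,d\nu\ =\ (1-|a_{k_0}|)^n\,b_{k_0}\ \lesssim\ \sigma(B_i)\,C_\nu^*g(\zeta)^2.\]
If $B_i\cap\{C_\nu^*g\le\eta\lambda\}=\emptyset$ then $B_i$ contributes nothing to the left side of \eqref{goodlambda}; otherwise, taking $\zeta$ in this intersection gives $\int_{B_i}S_1\,d\sigma\lesssim\eta^2\lambda^2\sigma(B_i)$, and Chebyshev yields $\sigma\big(\{\zeta\in B_i:S_1(\zeta)>\tfrac12(A\lambda)^2\}\big)\lesssim(\eta^2/A^2)\sigma(B_i)$. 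Summing over $i$ with bounded overlap proves \eqref{goodlambda}. Finally, the passage from \eqref{goodlambda} to the statement is the textbook one: fix $A$, choose $\eta$ small with $CA^p\eta^2\le\tfrac12$, set $F_N=\min(A_\nu g,N)\in L^p(\Sn)$, use $\{F_N>A\lambda\}\subset\{A_\nu g>A\lambda\}$ and \eqref{goodlambda} to obtain $\sigma(\{F_N>A\lambda\})\le C\eta^2\sigma(\{F_N>\lambda\})+\sigma(\{C_\nu^*g>\eta\lambda\})$ for $\lambda\ge\lambda_0$, multiply by $p\lambda^{p-1}$, integrate, and add $\int_0^{A\lambda_0}p\lambda^{p-1}\sigma(F_N>\lambda)\,d\lambda\le(A\lambda_0)^p$, arriving at
\[\int_{\Sn}F_N^p\,d\sigma\ \le\ (A\lambda_0)^p+CA^p\eta^2\!\int_{\Sn}F_N^p\,d\sigma+A^p\eta^{-p}\|C_\nu^*g\|_{L^p(\Sn)}^p;\]
absorbing the middle term (legitimate since $\int F_N^p<\infty$) and letting $N\to\infty$ gives $\|A_\nu g\|_{L^p(\Sn)}\lesssim\lambda_0+\|C_\nu^*g\|_{L^p(\Sn)}\asymp\|g\|_{L^2(\nu_n)}+\|C_\nu^*g\|_{L^p(\Sn)}$.

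The genuinely delicate step is the good-$\lambda$ inequality \eqref{goodlambda}, and inside it the geometric claim that the tent $\widehat{T}_i$ over a Whitney ball lies in $Q(a_{k_0})$ for a lattice point $a_{k_0}$ belonging to all the cones $\Gamma(\zeta)$, $\zeta\in B_i$; this is what forces $r$ to be small and requires careful bookkeeping of the comparison constants among the balls $B_\delta(\zeta)$, the regions $Q(\cdot)$ and the cones $\Gamma(\cdot)$ (via Lemma \ref{LN1}). The two routine complications — that $O_\lambda$ may be almost all of $\Sn$ for small $\lambda$ (circumvented by the threshold $\lambda_0$, whose finiteness is precisely the hypothesis $g\in L^2(\nu_n)$ and which is responsible for the additive term $\|g\|_{L^2(\nu_n)}$), and that apertures get enlarged a bounded number of times (harmless by aperture-independence) — cause no trouble.
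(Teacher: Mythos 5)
Your strategy --- a good-$\lambda$ inequality relating $A_\nu g$ to $C_\nu^* g$ --- is genuinely different from the paper's proof, which instead establishes the bilinear estimate
$\big|\langle f,g\rangle_{T^2_2(\nu)}\big|\lesssim\int_{\Sn}A_\nu f\,C^*_\nu g\,d\sigma+\|A_\nu f\|_{L^1(\Sn)}\|g\|_{L^2(\nu_n)}$
by a stopping-time construction $h(\zeta)$ together with the density estimate $\sigma(I(z)\cap H(z))\ge\sigma(I(z))/2$, and then concludes via tent-space duality $\big(T^{p'}_2\big)^*=T^p_2$. Both are classical real-variable routes, and both hinge on the same geometric point: producing a lattice point $a_{k_0}$ (the paper's $z'$) lying in every cone $\Gamma(\zeta)$ for $\zeta$ in the relevant boundary ball, while $Q(a_{k_0})$ captures the corresponding tent --- this is where $r$ must be small. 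Your handling of that geometric step, of the reduction to $p>2$, and of the threshold $\lambda_0$ responsible for the additive term $\|g\|_{L^2(\nu_n)}$ is sound.

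There is, however, a genuine gap in your estimate of the deep part $S_2$. From $z\in\Gamma(\zeta)$, $\zeta\in B_i$, $1-|z|^2\ge\kappa\delta_i$ you correctly deduce $z\in\Gamma'(\zeta_i^*)$ for an \emph{enlarged} aperture $\Gamma'$; the enlargement is unavoidable, since however large $\kappa$ is the quasi-triangle inequality only yields $|1-\langle z,\zeta_i^*\rangle|<\big(\sqrt{\gamma/2}+\sqrt{c/\kappa}\big)^2(1-|z|^2)$, which is strictly worse than $\tfrac{\gamma}{2}(1-|z|^2)$. You then bound $\int_{\Gamma'(\zeta_i^*)}|g|^2\,d\nu\lesssim A_\nu g(\zeta_i^*)^2$ ``by aperture-independence.'' But aperture-independence is an $L^p$-norm statement, not a pointwise one: for a general measure $\nu$ (e.g.\ a point mass located in $\Gamma'(\zeta_i^*)\setminus\Gamma(\zeta_i^*)$) the wide-aperture square function at a single point is not controlled by the narrow-aperture one, so knowing $A_\nu g(\zeta_i^*)\le\lambda$ gives no bound on $S_2(\zeta)$. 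The standard repair is to run the argument with two apertures: define $O_\lambda=\{A'_\nu g>\lambda\}$ using the wide aperture, so that $\zeta_i^*\notin O_\lambda$ gives $S_2(\zeta)\le A'_\nu g(\zeta_i^*)^2\le\lambda^2$ directly; prove the good-$\lambda$ inequality in the form $\sigma(\{A_\nu g>A\lambda\}\cap\{C^*_\nu g\le\eta\lambda\})\le C\eta^2\,\sigma(\{A'_\nu g>\lambda\})$; and at the end invoke the change-of-aperture theorem for tent spaces over arbitrary positive measures to replace $\|A'_\nu g\|_{L^p(\Sn)}$ by $\|A_\nu g\|_{L^p(\Sn)}$ before absorbing. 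This in turn requires an a priori finiteness to legitimize the absorption --- your truncation $F_N=\min(A_\nu g,N)$ no longer suffices, because the right-hand side now involves $A'_\nu g$ rather than $F_N$; one can, for instance, first treat $\nu$ supported in $\overline{D(0,s)}$, where $A'_\nu g$ is bounded by a constant times $\|g\|_{L^2(\nu_n)}$, and then let $s\to1$ by monotone convergence. With these modifications your proof goes through, but as written the key pointwise bound on $S_2$ fails.
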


\begin{proof}
The proof uses some terminology and concepts related to tent spaces, which are only covered in this paper for discrete measures. Moreover, a substantial part of the proof is remarkably similar to a known standard proof, see for instance \cite{Pel}. For completeness, we will prove this lemma by using notation analogous to that of the mentioned reference, and the reader should have no difficulties comparing the two proofs.

We want to show
\[
\big |\langle f,g \rangle_{T^2_2(\nu)}\big |\lesssim \int_{\Sn} A_\nu(f)(\zeta)C^*_\nu(g)(\zeta)d\sigma(\zeta)+ \|A_{\nu} f\|_{L^1(\Sn)}\cdot\|g\|_{L^2(\nu_ n)}.
\]
By duality of tent spaces, it then follows that if $C^*_\nu g \in L^p(\Sn)$, then $g$ belongs to the dual of $T^{p'}_2(\nu)$, which is $T^p_2(\nu)$. This implies that  $A_\nu g \in L^p(\Sn)$ with the desired estimate.\\

Fix $K>0$ large enough to be specified later, and set $M_K=\{|z|^2 >1-1/K^2\}$. Split the measure $\nu$ as $\nu=\nu_{|_{M_ K}} +\nu'$ with $\nu'=\nu_{|_{\Bn\setminus M_ K}}$. Then
\[
\langle f,g \rangle_{T^2_2(\nu)}=\langle f,g \rangle_{T^2_2(\nu_{|_{M_ K}})}+\langle f,g \rangle_{T^2_2(\nu')}.
\]
Moreover, we have
\[
\begin{split}
\big |\langle f,g \rangle_{T^2_2(\nu')}\big | & \le \int_{\Bn} |f(z)|\,|g(z)|\,(1-|z|^2)^n\,d\nu'(z)
\\
&\asymp \int_{\Sn} \left (\int_{\Gamma(\zeta)} |f(z)|\,|g(z)|\,d\nu'(z)\right )\,d\sigma(\zeta)
\\
&\le \|g\|_{L^2(\nu_ n)}\int_{\Sn} \left (\int_{\Gamma(\zeta)} |f(z)|^2\,d\nu'(z)\right )^{1/2}\,d\sigma(\zeta)
\\
&\le K^n\,\|A_{\nu} f\|_{L^1(\Sn)}\cdot\|g\|_{L^2(\nu_ n)}.
\end{split}
\]
Therefore, it is enough to show that, for $\nu$ supported on $M_ K$ one has
\[
\big |\langle f,g \rangle_{T^2_2(\nu)}\big |\lesssim \int_{\Sn} A_\nu(f)(\zeta)\,C^*_\nu(g)(\zeta)\,d\sigma(\zeta).
\]

\mbox{}
\\
We note first that if $\gamma>1$, $\zeta \in \Sn$, and $z \in \Gamma(\zeta)$, we have for $R \in (0,1)$ the estimate
\begin{equation}\label{R}
|1-\langle Rz,\zeta\rangle|<\left(1-\frac{\gamma}{2}\right)(1-R)+\frac{\gamma}{2}(1-R|z|^2).
\end{equation}
If $1>|z|^2 > \alpha$ with $\alpha>(2/\gamma-1)$, then for $R$ close enough to $1$, we have $Rz \in \Gamma(\zeta)$. (Note that if $\gamma\geq 2$, any $z\neq 0$ and $R\in (0,1)$ will work.)\\

We will use a variation of a well-known argument, which can be found with details in \cite{Pel}, for instance. Define
$$\Gamma^h(\zeta)=\Gamma(\zeta)\setminus \overline{D(0,1/(h+1))},$$
and
$$A_\nu(g|h)(\zeta)=\left(\int_{\Gamma^h(\zeta)}|f(z)|^2d\nu(z)\right)^{1/2}.$$
Now set
$$h(\zeta)=\sup \big \{h:A_\nu(g|h)(\zeta) \leq C_1 C^*_\nu(g)(\zeta)\big \}.$$

The claim is proven, once we show
\[
\int_{\Bn} k(z)(1-|z|^2)^nd\nu(z)\leq 2\, \int_{\Sn}\left(\int_{\Gamma^{h(\zeta)}(\zeta)}k(z)d\nu(z)\right)d\sigma(\zeta)
\]
for every positive $\nu$-measurable function $k$.
By Fubini's theorem, the integral on the right-hand-side equals
\[
\int_{\Bn}\sigma(I(z)\cap H(z))\,k(z)\,d\nu(z),
\]
with $H(z)=\{\zeta \in \Sn: 1/(1+h(\zeta))\leq |z|\}$.  We therefore want to show that
\[
\frac{\sigma(I(z)\cap H(z))}{\sigma(I(z))}\geq \frac{1}{2}
\]
holds for $z\in M_K$. To this end, take $z \in M_K$ and set $z^*$ be the unique point in $\Bn$ satisfying $\zeta_{z^*}=\zeta_z$ and $(1-|z^*|^2)=K(1-|z|^2)$. Suppose, for now, that $K$ is chosen to be large enough so that the conditions on \eqref{R} are met. Then, there exists $r_K>0$ with the following properties.
\begin{itemize}
\item[(i)] If $0<r<r_K$, then $D(z^*,r)\subset \bigcap_{v \in I(z)} \Gamma(v)$;
\item[(ii)] If $z' \in D(z^*,r)$, then $2(1-|z'|^2)\geq (1-|z^*|^2)$;
\item[(iii)] If $z'\in D(z^*,r)$, and $z'_*$ is the unique point in $\Bn$ satisfying $\zeta_{z'}=\zeta_{z'_*}$ and $K(1-|z'_*|^2)=(1-|z'|^2)$, then $|1-\langle z,z'_*\rangle|\leq 2(1-|z|^2)$.
\end{itemize}
Now, suppose that $r>0$ above is the density of the lattice and set $z'$ to be a point of the lattice contained in $D(z^*,r)$. Notice that if $u$ satisfying $|u|\geq |z|$ (so that $1-|u|^2\leq 1-|z|^2$) does not belong to $Q(z')$, then by the property (ii) above (recall that $d(a,b)=|1-\langle a,b\rangle|^{1/2}$ satisfies the triangle inequality on $\overline{\Bn}$)
\begin{align*}
|1-\langle u,\zeta_z\rangle|^{1/2}&\geq |1-\langle u,\zeta_{z'}\rangle|^{1/2}-|1-\langle \zeta_z,\zeta_{z'}\rangle|^{1/2}\\
&\geq (K/2)^{1/2}(1-|z|^2)^{1/2}-|1-\langle \zeta_{z^*},\zeta_{z'}\rangle|^{1/2}.
\end{align*}
An application of Exercise 1.25 from \cite{ZhuBn} together with property (iii) above shows us that
$$|1-\langle \zeta_{z^*},\zeta_{z'}\rangle|\leq 8(1-|z|^2).$$
Let us now set $K$ to be a number big enough so that $((K/2)^{1/2}-8^{1/2})^2>K/3$. Then
 $u \notin Q(z')$ implies that $$|1-\langle u,\zeta_z\rangle|\geq (K/3)(1-|z|^2).$$
If now $\zeta \in I(z)$, then
$$|1-\langle u,\zeta\rangle|^{1/2}\geq |1-\langle u,\zeta_z\rangle|^{1/2}-|1-\langle z,\zeta_z\rangle|^{1/2}-|1-\langle z,\zeta\rangle|^{1/2},$$
so
$$|1-\langle u,\zeta\rangle|\geq \left((K/3)^{1/2}-1-(\gamma/2)^{1/2}\right)^2 (1-|z|^2)\geq (\gamma/2)(1-|u|^2),$$
when $K$ is large enough compared to $\gamma$. We now fix $K$ big enough to carry us through all the calculations above.

From this point onwards, we could follow the argument of \cite{Pel}, as the only real difference was that we had to choose the point $z'$ from the lattice. We will present the remaining details for the convenience of the reader.

By our choice of $K$ and $r$, if $|u|\geq |z|$ does not belong to $Q(z')$, then $I(u)\cap I(z)=\emptyset$. Thus, if $x=1/|z|-1$ (so that $\Gamma^x(\zeta)=\Gamma(\zeta)\setminus \overline{D(0,|z|)}$), then
\[
\begin{split}
\frac{1}{\sigma(I(z))}\int_{I(z)} &\left(\int_{\Gamma^x(\zeta)}|g(u)|^{2}d\nu(u)\right)d\sigma(\zeta)\\
&=\frac{1}{\sigma(I(z))}\int_{\{|z|<|u|<1\}}\sigma(I(z)\cap I(u))|g(u)|^2d\nu(u) \\
&\leq\frac{1}{\sigma(I(z))}\int_{Q(z')}\sigma(I(z)\cap I(u))|g(u)|^2d\nu(u)\\
&\leq\frac{C_3}{\sigma(I(z'))}\int_{Q(z')}(1-|u|^2)^n |g(u)|^2 d\nu(u)\\
&\leq C_3 \inf_{v \in I(z)}C^*_\nu(g)(v)^2.
\end{split}
\]
The last inequality holds, because by property (i), we have $z'\in D(z^*,r)\subset \Gamma(v)$ for all $v \in I(z)$.
Now, let us chooce $C_1$ so that $C_1^2>2C_3$. If $E(z)=\Sn\setminus H(z)$, then
\begin{align*}
\sigma(E(z)\cap I(z))&\leq \int_{I(z)}\frac{A_\nu(g|x)(\zeta)^2}{C_1^2 C_\nu^*(g)(\zeta)^2}d\sigma(\zeta)\\
&\leq \frac{1}{C_1^2 \inf_{v \in I(z)}C^*_\nu(g)(v)^2}\int_{I(z)}A_\nu(g|x)(\zeta)^2d\sigma(\zeta)\\
&<\sigma(I(z))/2.
\end{align*}
It follows that $\sigma(I(z)\cap H(z)) \geq \sigma(I(z))/2$ for $z \in M_K$.  Note also that the implicit constants in the estimate can be chosen to be independent of the measure $\nu$. This finishes the proof.
\end{proof}
\mbox{}
\\
We also need the following more general area function description of the Hardy spaces. The result is probably known to experts, but we were unable to find a proof in the literature.

\begin{proposition}\label{FracArea}
Let $f$ be holomorphic on $\Bn$, $0<p<\infty$ and $d\lambda_n(z)=dv_{-1-n}(z)$. If $s\geq -1$ and $t>0$, then the following are equivalent.
\begin{enumerate}
\item[(a)] $f \in H^p(\Bn)$;
\item[(b)] $\left(\int_{\Gamma(\zeta)}|Rf(z)|^2 (1-|z|^2)^2 d\lambda_n(z)\right)^{1/2} \in L^p(\Sn);$
\item[(c)] $\left(\int_{\Gamma(\zeta)}|R^{s,t}f(z)|^2 (1-|z|^2)^{2t} d\lambda_n(z)\right)^{1/2} \in L^p(\Sn).$
\end{enumerate}
Moreover, if $f(0)=0$, then the $L^p$ norms involved in all the items above are equivalent.
\end{proposition}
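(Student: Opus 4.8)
The plan is to dispose of (a)$\Leftrightarrow$(b) by quoting Theorem~\ref{AreaT}, and then to prove (b)$\Leftrightarrow$(c) by a two-sided comparison between the area function built from $R^{s,t}f$ and the one built from $Rf$. Since $d\lambda_n(z)=dv_{-1-n}(z)$ is, up to a fixed constant, $(1-|z|^2)^{-1-n}\,dv(z)$, we have that $(1-|z|^2)^{2}\,d\lambda_n(z)$ is a constant multiple of $(1-|z|^2)^{1-n}\,dv(z)$, so the function in (b) is a fixed multiple of the admissible area function $Af$; hence (a)$\Leftrightarrow$(b), together with the norm equivalence when $f(0)=0$, is exactly Theorem~\ref{AreaT}. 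Likewise $(1-|z|^2)^{2t}\,d\lambda_n(z)$ is a constant multiple of $(1-|z|^2)^{2t-1-n}\,dv(z)$, so (c) asserts that $\zeta\mapsto\int_{\Gamma(\zeta)}|R^{s,t}f(z)|^2(1-|z|^2)^{2t-1-n}\,dv(z)$ lies in $L^{p/2}(\Sn)$; this is what must be compared with $Af(\zeta)^2\asymp\int_{\Gamma(\zeta)}|Rf(z)|^2(1-|z|^2)^{1-n}\,dv(z)$.

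The comparison rests on an integral representation linking $R^{s,t}f$ and $Rf$. It suffices to treat holomorphic polynomials $f$ with $f(0)=0$ (a dense class; the general case follows by a routine approximation, applying the estimates to Taylor polynomials of the dilates $f_\rho(z)=f(\rho z)$ and then letting $\rho\to1^-$ with Fatou's lemma). Writing $f=\sum_{k\geq1}f_k$ for the homogeneous expansion, we have $R^{s,t}f=T_{t-1}(Rf)$, where $T_{t-1}$ multiplies the $k$-th homogeneous term by a scalar of size $\asymp k^{t-1}$; modulo composition with invertible operators that act boundedly on all the function spaces occurring here (and hence do not affect the area-function norms), $T_{t-1}$ is an honest fractional radial derivative $R^{s',t-1}$ when $t\geq1$, and a fractional integral when $t<1$. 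In either case, for every sufficiently large $M$,
\[
R^{s,t}f(z)=c\int_{\Bn}\frac{(1-|w|^2)^{M}\,Rf(w)}{(1-\langle z,w\rangle)^{\,n+1+M+(t-1)}}\,dv(w)+(\text{lower order terms}),
\]
and, symmetrically, $Rf$ is recovered from $R^{s,t}f$ by an analogous Forelli--Rudin type integral.

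From this representation, applying the Cauchy--Schwarz inequality inside the integral, estimating one of the two resulting kernel factors by Lemma~\ref{IctBn} (or Lemma~\ref{FRgeneral}), and then using a standard estimate for integrals over the admissible regions $\Gamma(\zeta)$ (where $1-|z|^2\asymp|1-\langle z,\zeta\rangle|$, so that it is essential to keep the integration over the cone rather than over all of $\Bn$), one arrives, for $\lambda$ arbitrarily large, at the pointwise bound
\[
\int_{\Gamma(\zeta)}|R^{s,t}f(z)|^2(1-|z|^2)^{2t-1-n}\,dv(z)\lesssim\int_{\Bn}\Big(\frac{1-|w|^2}{|1-\langle w,\zeta\rangle|}\Big)^{\lambda}\,d\mu_f(w)+(\cdots),
\]
where $d\mu_f(w)=|Rf(w)|^2(1-|w|^2)^{1-n}\,dv(w)$, so that $\mu_f(\Gamma(\zeta))\asymp Af(\zeta)^2$. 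Raising both sides to the power $p/2$, integrating over $\Sn$, and invoking Lemma~\ref{Gamma} with $s=p/2$ (having taken $\lambda>n\max(1,2/p)$) gives $\int_{\Sn}(\,\cdot\,)^{p/2}\,d\sigma\lesssim\int_{\Sn}\mu_f(\Gamma(\zeta))^{p/2}\,d\sigma\asymp\|Af\|_{L^p(\Sn)}^p$, which is (b)$\Rightarrow$(c). Running the identical argument with the roles of $R^{s,t}f$ and $Rf$ interchanged (using the symmetric representation) gives (c)$\Rightarrow$(b), and then $f\in H^p$ by Theorem~\ref{AreaT}. The independence of the whole statement on the choice of $s\geq-1$ is automatic, as $s$ never enters the estimates in an essential way.

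I expect the main obstacle to be the exponent bookkeeping in the pointwise bound and in the subsequent Forelli--Rudin step: the powers of $1-|z|^2$ and $1-|w|^2$ must be balanced so that the $z$-integral converges over $\Gamma(\zeta)$ while, at the same time, the exponent $\lambda$ can be pushed above the threshold $n\max(1,2/p)$ required by Lemma~\ref{Gamma}, uniformly over the whole range $0<p<\infty$. In particular, since tent-space duality is available only for $p>1$ (Theorem~\ref{TTD1}), for $p\leq1$ the estimate must be obtained directly from the positive-kernel bound above, not by testing against a dual element. A minor additional point is the case $t<1$, where $T_{t-1}$ has negative order and the Forelli--Rudin estimate is applied in a slightly different (and easier) form, after which the argument proceeds unchanged.
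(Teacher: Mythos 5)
Your proposal is correct and follows essentially the same route as the paper: (a)$\Leftrightarrow$(b) is the area-function theorem, and (b)$\Leftrightarrow$(c) is obtained from a Forelli--Rudin type integral representation of $R^{s,t}f$ in terms of $Rf$ (and a symmetric one for the converse), followed by Cauchy--Schwarz, an application of Lemma~\ref{FRgeneral}, and finally Lemma~\ref{Gamma} with exponent $p/2$ and a kernel power above $n\max(1,2/p)$. The only slip is the parenthetical claim that one must ``keep the integration over the cone rather than over all of $\Bn$'': the mechanism is the opposite --- the cone integral is majorized by the weighted full-ball integral $\int_{\Bn}(\cdot)\bigl(\tfrac{1-|w|^2}{|1-\langle \zeta,w\rangle|}\bigr)^{\theta}\,d\lambda_n(w)$ precisely so that Fubini and Lemma~\ref{FRgeneral} can be applied --- but since your displayed pointwise bound already has the correct full-ball form on the right-hand side, this does not affect the argument.
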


\begin{proof}
The equivalence of (a) and (b) can be found in \cite{P1}. So, we will prove that (b) and (c) are equivalent. To this end, we may clearly assume that $f(0)=0$.

Let us assume (b). Since $f(0)=0$, we have the estimate
$$f(w)=\int_{\Bn}Rf(u)L_\beta(w,u) dv_\beta(u),$$
where
$$L_\beta(w,u)=\int_0^1 \left(\frac{1}{(1-\langle w,\rho u\rangle)^{n+1+\beta}}-1\right)\frac{d\rho}{\rho}$$
valid for large enough $\beta$, see page 51 of \cite{ZhuBn}. Moreover, if $\beta=s+N$ for some positive integer $N$, we have by Proposition 5 of \cite{ZZ},
$$R^{s,t}\frac{1}{(1-\langle w,\rho u\rangle)^{n+1+\beta}}=\frac{\phi(\langle w,\rho u\rangle)}{(1-\langle w,\rho u\rangle)^{n+1+\beta+t}},$$
where $\phi$ is a one variable polynomial of degree $N$. Note that $R^{s,t}1=1$, so putting $\rho=0$ gives $1$ in the above identity. Now, it is straightforward to obtain
$$|R^{s,t}L_\beta (w,u)|\lesssim \frac{1}{|1-\langle w,u\rangle|^{n+\beta+t}}.$$
This allows us to obtain the bound
$$|R^{s,t}f(w)|\lesssim \int_{\Bn} \frac{|Rf(u)|}{|1-\langle w,u\rangle|^{n+\beta+t}}dv_\beta(u).$$
We may assume that $\beta=s+N>n\max(1,2/p)-n+1$. By standard estimates, this leads to
$$(1-|w|^2)^{2t}|R^{s,t}f(w)|^2 \lesssim \int_{\Bn} \frac{|Rf(u)|^2dv_{\beta}(u)}{|1-\langle w,u\rangle|^{n+\beta+t-1}}(1-|w|^2)^{t}.$$

Now, by Lemma \ref{FRgeneral}, if $\theta>n+\beta-1$, we have
\begin{align*}
&\int_{\Gamma(\zeta)}\left[(1-|w|^2)^t |R^{s,t}f(w)|\right]^2d\lambda_n(w)\\
\lesssim&\int_{\Bn}\left[(1-|w|^2)^t |R^{s,t}f(w)|\right]^2\left(\frac{1-|w|^2}{|1-\langle \zeta,w\rangle|}\right)^\theta d\lambda_n(w)\\
\lesssim&\int_{\Bn} \left(\frac{1-|w|^2}{|1-\langle \zeta,w\rangle|}\right)^\theta \left[ \int_{\Bn} \frac{|Rf(u)|^2dv_{\beta}(u)}{|1-\langle w,u\rangle|^{n+\beta+t-1}}\right](1-|w|^2)^{t} d\lambda_n(w)\\
\lesssim& \int_{\Bn} |Rf(u)|^2\left(\int_{\Bn}\frac{(1-|w|^2)^{\theta+t-n-1} dv(w)}{|1-\langle \zeta,w\rangle|^\theta |1-\langle w,u\rangle|^{n+\beta+t-1}}\right) dv_{\beta}(u)\\
\lesssim& \int_{\Bn} |Rf(u)|^2 (1-|u|^2)^2 \left(\frac{1-|u|^2}{|1-\langle \zeta,u\rangle|}\right)^{n+\beta-1} d\lambda_n(u)
\end{align*}
Now, since $n+\beta-1>n\max(1,2/p)$, we can use Lemma \ref{Gamma} to get
\begin{align*}
&\int_{\Sn} \left(\int_{\Gamma(\zeta)}\left[(1-|w|^2)^t |R^{s,t}f(w)|\right]^2d\lambda_n(w)\right)^{p/2} d\sigma(\zeta)\\
\lesssim &\int_{\Sn} \left(\int_{\Bn} |Rf(u)|^2 (1-|u|^2)^2 \left(\frac{1-|u|^2}{|1-\langle \zeta,u\rangle|}\right)^{n+\beta-1} d\lambda_n(u)\right)^{p/2}d\sigma(\zeta)\\
\lesssim &\int_{\Sn} \left(\int_{\Gamma(\zeta)} |Rf(u)|^2 (1-|u|^2)^2 d\lambda_n(u)\right)^{p/2}d\sigma(\zeta),
\end{align*}
so (c) is obtained.\\

Suppose now that (c) holds. By an estimate from  \cite[page 20]{ZZ}, for large enough $\beta$ we have
$$(1-|z|^2)|Rf(z)|\lesssim (1-|z|^2)\int_{\Bn}\frac{(1-|w|^2)^t|R^{s,t}f(w)|dv_\beta(w)}{|1-\langle z,w\rangle|^{n+2+\beta}}.$$
We may assume that $\beta>n\max(1,2/p)-n-1$ and let $0<\varepsilon<2$ so that $\beta-\varepsilon>-1$. Then use Cauchy-Schwarz and standard integral estimates to deduce
$$\left[(1-|z|^2)|Rf(z)|\right]^2 \lesssim (1-|z|^2)^2 \int_{\Bn}\frac{(1-|w|^2)^{2t}|R^{s,t}f(w)|^2 dv_{\beta+\varepsilon}(w)}{|1-\langle z,w\rangle|^{n+3+\beta}} (1-|z|^2)^{-\varepsilon}.$$
Now, take $\theta>n\max(1,2/p)+\varepsilon+1+\beta$, and estimate as before by using Lemma \ref{FRgeneral} to obtain
\begin{align*}
&\int_{\Gamma(\zeta)}\left[(1-|z|^2)|Rf(z)|\right]^2 d\lambda_n(z)\\
&\lesssim \int_{\Bn} \left[(1-|z|^2)|Rf(z)|\right]^2 \left(\frac{1-|z|^2}{|1-\langle z,\zeta\rangle|}\right)^{\theta} d\lambda_n(z)\\
&\lesssim \int_{\Bn} (1-|w|^2)^{2t}|R^{s,t}f(w)|^2 \left(\int_{\Bn} \frac{(1-|z|^2)^{2+\theta-\varepsilon-n-1}dv(z)}{|1-\langle z,\zeta\rangle|^\theta \,|1-\langle z,w\rangle|^{n+3+\beta}}\right)dv_{\beta+\varepsilon}(w)\\
&\lesssim \int_{\Bn} (1-|w|^2)^{2t}|R^{s,t}f(w)|^2 \left(\frac{1-|w|^2}{|1-\langle w,\zeta\rangle|}\right)^{n+1+\beta+\varepsilon} d\lambda_n(w).
\end{align*}
Since $n+1+\beta+\varepsilon>n\max(1,2/p)+\varepsilon$, from Lemma \ref{Gamma}  we conclude that (c) implies (b). This finishes the proof.
\end{proof}

\subsection{\textbf{Necessity}}
Throughout this proof, we set $\|Q_{\mu}\|=\|Q_{\mu}\|_{H^p\rightarrow H^q}$. Since $\mu(\Bn)=(Q_{\mu}1) (0)$, the measure $\mu$ is finite with $\mu(\Bn)\lesssim \|Q_{\mu}\|$. We split the proof in several cases.

\subsubsection{\textbf{The case $q=2$}}
It is well known that the boundedness is equivalent to
\[
\int_{\Bn}  | R^{-1,1}(Q_{\mu} f)(z)|^2 \,dv_ 1(z) \le C \,\|Q_{\mu}\|^2  \cdot \|f\|^2_{H^p}.
\]
For example, this can be deduced from Proposition \ref{FracArea} and the estimate \eqref{EqG}.\\

Let $\{a_ k\}\subset \Bn$ be a separated sequence and define
\[
f_ k(z)=\left (\frac{1-|a_ k|^2}{1-\langle z,a_ k \rangle}\right  )^{n+1}, \qquad z\in \Bn.
 \]
 Apply the previous inequality with the function $$F_ t(z)=\sum_ k \lambda_ k\,r_ k(t)\, f_ k(z)$$ with $\lambda=\{\lambda_ k\}\in T^p_ 2$, and use Proposition \ref{TKL} to get
\[
\int_{\Bn} \Big |\sum_ k \lambda_ k \,r_ k(t)\, R^{-1,1}(Q_{\mu} f_ k)(z)\Big |^2 \,dv_ 1(z) \le C \,\|Q_{\mu}\|^2 \cdot \|\lambda\|^2_{T^p_ 2}.
\]
Integrate respect to $t$ between $0$ and $1$, interchange the order of integration and use  Khinchine's inequality to obtain
\[
\sum_ k |\lambda_ k|^2\, \int_{\Bn} \big | R^{-1,1}(Q_{\mu} f_ k)(z)\big |^2 \,dv_ 1(z) \le C \,\|Q_{\mu}\|^2\cdot \|\lambda\|^2_{T^p_ 2}.
\]
By subharmonicity (see \cite[Lemma 2.24]{ZhuBn} for example) this implies
\[
\sum_ k |\lambda_ k|^2\,  \big | R^{-1,1}(Q_{\mu} f_ k)(a_ k)\big |^2 \,(1-|a_ k|)^{n+2} \le C \|Q_{\mu}\|^2 \cdot \|\lambda\|^2_{T^p_ 2}.
\]
and therefore
\[
\sum_ k |\lambda_ k|^2\,  \big | R^{-1,1}(Q_{\mu} f_ k)(a_ k)\big |^2 \,(1-|a_ k|)^{n+2} \le C \,\|Q_{\mu}\|^2\cdot \|\lambda^2\|_{T^{p/2}_ 1}.
\]
By the duality of tent spaces in Theorem \ref{TTD1}, this is equivalent to
\[
\left \{ \big | R^{-1,1}(Q_{\mu} f_ k)(a_ k)\big |^2 \,(1-|a_ k|)^2\right \} \in T_{\infty}^{p/(p-2)},
\]
with the corresponding $T_{\infty}^{p/(p-2)}$-norm dominated by $\|Q_{\mu}\|^2$. That is,
\[
\sup_{a_ k\in \Gamma (\zeta)} \big | R^{-1,1}(Q_{\mu} f_ k)(a_ k)\,(1-|a_ k|)\big |^2 \in L^{p/(p-2)}(\Sn)
\]
or
\begin{equation}\label{Eqq21}
\sup_{a_ k\in \Gamma (\zeta)} \big | R^{-1,1}(Q_{\mu} f_ k)(a_ k)\big |\,(1-|a_ k|) \in L^{2p/(p-2)}(\Sn)
\end{equation}
with the corresponding $L^{2p/(p-2)}$-norm dominated by $\|Q_{\mu}\|$.
However, since $(1-|a_ k|)\asymp |1-\langle a_ k,w \rangle |$ for $w\in Q(a_ k)$,
\[
\begin{split}
\big | R^{-1,1}(Q_{\mu} f_ k)(a_ k)| \,(1-|a_ k|)&= (1-|a_ k|^2)^{n+2} \left |\int_{\Bn} \!\!\frac{d\mu(w)}{|1-\langle a_ k,w \rangle |^{2(n+1)}} \right | \\
\\
& \ge (1-|a_ k|^2)^{n+2} \int_{Q(a_ k)} \frac{d\mu(w)}{|1-\langle a_ k,w \rangle |^{2(n+1)}} \\
\\
& \gtrsim \frac{1}{(1-|a_ k|)^n}  \int_{Q(a_ k)} (1-|w|)^n\frac{d\mu(w)}{(1-|w|)^n}.
\end{split}
\]

Let us restate this estimate for later reference.
\begin{equation}\label{estim}
\frac{\mu(Q(a_k))}{(1-|a_k|)^n} \le C\,\big |R^{-1,1}(Q_{\mu} f_ k)(a_ k)\big |\,(1-|a_ k|) .
\end{equation}
Finally, the result follows from \eqref{Eqq21}, \eqref{estim} and Lemma \ref{discrete} applied to the measure $d\nu(z)=d\mu(z)(1-|z|)^{-n}$ and $g=1$.

\subsubsection{\textbf{The case $q>2$}}
Let $\{a_k\}$ be a separated sequence in $\Bn$. Using the general area function description of $H^q$ given in Proposition \ref{FracArea}, and the same argument (applying Khinchine's inequality) and test functions as in the previous case, we arrive at
\begin{align*}
&\int_{\Sn} \left ( \sum_ k |\lambda_ k|^2 \int_{\Gamma (\zeta)} |R^{-1,1}(Q_{\mu} f_ k)(z)|^ 2\,dv_{1-n}(z)\right )^{q/2} d\sigma(\zeta)\\
\leq & C'\int_{\Sn}\left(\int_0^1 \int_{\Gamma(\zeta)} \left|\sum_k\lambda_k r_k(t)R^{-1,1}(Q_{\mu}f_k)(z)\right|^2dv_{1-n}(z)dt\right)^{q/2}d\sigma(\zeta)\\
\leq& C'\int_{0}^{1} \,\int_{\Sn}\left(\int_{\Gamma(\zeta)}\left|R^{-1,1}(Q_{\mu}F_ t)(z)\right|^2dv_{1-n}(z)\right)^{q/2}d\sigma(\zeta)\,dt \le C \,\|Q_{\mu}\|^q\cdot \|\lambda \|^q_{T^p_ 2}.
\end{align*}
Applying Lemma \ref{Gamma} with $\theta$ big enough we have
$$
\int_{\Sn} \left[\sum_{k}|\lambda_k|^2 \left(\frac{1-|a_k|^2}{|1-\langle a_k,\zeta\rangle|}\right)^\theta\int_{D_k}   |R^{-1,1}(Q_\mu f_k)(z)|^2 dv_{1-n}(z)\right]^{q/2}d\sigma(\zeta)\leq C\|Q_\mu\|^q\cdot \|\lambda \|^q_{T^p_ 2},
$$
where $D_ k$ denotes the Bergman metric ball $D(a_ k,r)$. Now, because $|1-\langle a_ k,\zeta \rangle | \asymp 1-|a_ k|$ for $a_ k\in \Gamma(\zeta)$, summing only over indices $k$ so that $a_k \in \Gamma(\zeta)$, we obtain
\[
\int_{\Sn} \left[\sum_{k:a_ k\in \Gamma (\zeta)}|\lambda_k|^2 \int_{D_k}   |R^{-1,1}(Q_\mu f_k)(z)|^2 dv_{1-n}(z)\right]^{q/2}d\sigma(\zeta)\leq C\|Q_\mu\|^q\cdot \|\lambda \|^q_{T^p_ 2}.
\]
By subharmonicity and the estimate \eqref{estim}, this gives
\begin{equation}\label{EqQ2b}
\int_{\Sn} \left ( \sum_ {a_ k \in \Gamma (\zeta)} |\lambda_ k|^2  \,\left (\frac{\mu(Q(a_ k))}{(1-|a_ k|)^n}\right )^2 \right )^{q/2} d\sigma(\zeta) \lesssim \|Q_\mu\|^q\|\cdot \|\lambda \|^q_{T^p_ 2}.
\end{equation}

Now, let $\beta=(pq-p-q)/(p-q)$ so that $q(\beta-1)/(q-2)=r$. Since $p>q>2$ we see that $\beta>1$.  By \eqref{EqG} we have
\begin{equation}\label{Eq-Q>2}
\begin{split}
\sum_ k |\lambda_ k|^2 &\left ( \frac{\mu(Q(a_ k))}{(1-|a_ k|)^n}\right )^{\beta+1} (1-|a_ k|)^n \asymp \int_{\Sn} \left (\sum_ { a_ k\in \Gamma(\zeta)} |\lambda_ k|^2 \left ( \frac{\mu(Q(a_ k))}{(1-|a_ k|)^n}\right )^{\beta+1}\right )d\sigma(\zeta)\\
\\
& \le \int_{\Sn} \left (\sup_{a_ k\in \Gamma(\zeta)} \frac{\mu(Q(a_ k))}{(1-|a_ k|)^n}\right )^{\beta-1}\left (\sum_ {k: a_ k\in \Gamma(\zeta)} |\lambda_ k|^2 \left (\frac{\mu(Q(a_ k))}{(1-|a_ k|)^n}\right )^2\right )d\sigma(\zeta).
\end{split}
\end{equation}

Hence, applying H\"{o}lder's inequality and \eqref{EqQ2b} we get
\[
\begin{split}
\sum_ k |\lambda_ k|^2 &\left ( \frac{\mu(Q(a_ k))}{(1-|a_ k|)^n}\right )^{\beta+1} (1-|a_ k|)^n   \lesssim \|\nu \|_{T^r_{\infty}}^{\beta-1} \cdot \big \|Q_{\mu}\big \|^2 \cdot \|\lambda \|^2_{T^p_ 2},
\end{split}
\]

with $\nu=\{\nu_ k\}$ and $\nu_  k =\mu(Q(a_ k))/(1-|a_ k|)^n$. That is, we have

\[
\sum_ k |\alpha_ k|\,\nu_ k ^{\beta+1} \le C  \|\nu \|_{T^r_{\infty}}^{\beta-1} \cdot \big \|Q_{\mu}\big \|^2 \cdot \|\alpha \|^2_{T^{p/2}_ 1}.
\]
By the duality between the tent spaces $T^{p/2}_ 1$ and $T_{\infty}^{(p/2)'}$ (see Theorem \ref{TTD1}), we obtain
\[
\big \|\nu^{\beta+1}\big \|_{T^{p/(p-2)}_{\infty}}\lesssim \|\nu \|_{T^r_{\infty}}^{\beta-1} \cdot \big \|Q_{\mu}\big \|^2 .
\]
It is straightforward to check that
\[
(\beta+1)p/(p-2)=r
\]
so that
\[
\big \|\nu \big \|^{\beta+1}_{T^r_{\infty}}\le \big \|\nu^{\beta+1}\big \|_{T^{p/(p-2)}_{\infty}}\lesssim \big \|\nu \big \|_{T^r_{\infty}}^{\beta-1} \cdot \big \|Q_{\mu}\big \|^2 .
\]
Now, if $\mu$ is compactly supported, then $\|\nu \big \|_{T^r_{\infty}}<\infty$, and therefore we deduce that
\[
\|\nu \big \|_{T^r_{\infty}} \lesssim \big \|Q_{\mu}\big \|
\]
An application of Lemma \ref{discrete} gives $\|\widetilde{\mu}\|_{L^r}\lesssim \|Q_{\mu}\|$ when $\mu$ is compactly supported. The general case follows by an approximation argument. Indeed, let $r_ k \in (0,1)$ with $r_ k\rightarrow 1$, and consider the measures $\mu_ k$ and $\mu_ k^*$ defined on Borel sets $E$ by $\mu_ k(E)= \mu(E\cap D(0,r_ k))$ and $\mu_ k^* (E)=\mu \big (r_ k^{-1}E \cap D(0,r_ k)\big )$. Note that $\widetilde{\mu_ k}\le \widetilde{\mu_ k^*}$ because $$D(0,r_ k)\cap \Gamma (\zeta) \subset D(0,r_ k)\cap r_ k^{-1}\Gamma (\zeta).$$ To be honest, if the aperture of $\Gamma(\zeta)=\Gamma_\gamma(\zeta)$ satisfies $\gamma\geq 2$, then the above inclusion holds as stated, whereas if $1<\gamma<2$, then it will hold modulo a compact set, see the beginning of the proof of Lemma \ref{discrete}. The argument can be completed either way, and we can always just change the aperture. We arrive at
\[
\big \|\widetilde{\mu}\big \|_{L^r} \le \liminf _ k\big \|\widetilde{\mu_ k}\big \|_{L^r} \le \liminf_ k \big \|\widetilde{\mu^*_ k}\big \|_{L^r}
\lesssim \liminf _ k \big \| Q_{\mu^*_ k}\big \|.
\]
Now, if $f$ is a unit vector in $H^p$, we have (by a change of variables) $|Q_{\mu^*_ k}f(z)| \asymp |(Q_{\mu} f_ k)_ k (z)|$, where $g_ k(z)=g(r_ k z)$, which easily gives $\|Q_{\mu_ k^*}\|\lesssim \|Q_{\mu}\|$ because dilatation by $r_ k$ can only decrease the norm. This gives
\[
\big \|\widetilde{\mu}\big \|_{L^r} \lesssim \big \| Q_{\mu}\big \|
\]
finishing the proof in this case.
\subsubsection{\textbf{The case $q<2$}} We will start with the following simple lemma.

\begin{lemma}\label{L-Qmu1}
Let $1<p<\infty$. If $Q_{\mu} f$ is in $H^p$ and  $g\in H^{p'}$, then
\begin{equation*}
\int_{\Sn} Q_\mu f(\zeta)\overline{g(\zeta)}d\sigma(\zeta)=\int_{\Bn} \!\!f(w)\,\overline{g(w)}\,d\mu(w).
\end{equation*}
\end{lemma}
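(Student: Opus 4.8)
I want to prove the duality identity
\[
\int_{\Sn} Q_\mu f(\zeta)\,\overline{g(\zeta)}\,d\sigma(\zeta)=\int_{\Bn} f(w)\,\overline{g(w)}\,d\mu(w)
\]
under the hypothesis that $Q_\mu f\in H^p$ and $g\in H^{p'}$. The natural first move is to reduce to a situation where I can freely interchange integrals. Since holomorphic polynomials are dense in $H^{p'}$ and $Q_\mu f\in H^p$, I would first verify the identity when $g$ is a holomorphic polynomial; then I obtain the general case by approximating $g$ in $H^{p'}$ by polynomials $g_m$ and passing to the limit using that $Q_\mu f\in H^p$ and $g_m\to g$ in $H^{p'}$ (so the left side converges by H\"older on $\Sn$). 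The only subtlety is that the right-hand side must also converge, i.e. one needs $\int_{\Bn}|f(w)|\,|g_m(w)-g(w)|\,d\mu(w)\to 0$; I would handle this by first recording that the boundedness of $Q_\mu$ forces $\mu$ to be finite and in fact (from the necessity part of the relevant theorem, or a direct kernel test) a sufficiently nice Carleson measure, so that the embedding $H^{p'}\to L^1(|f|\,d\mu)$ is continuous once $f$ is fixed with $Q_\mu f\in H^p$. Actually, a cleaner route: for fixed $f$, the functional $g\mapsto\int_{\Bn}f\,\overline g\,d\mu$ is controlled via Carleson-type estimates, so both sides are continuous in $g\in H^{p'}$ and it suffices to check polynomials.

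**The polynomial case.** For $g$ a holomorphic polynomial I would use the reproducing property of the Cauchy--Szeg\H o kernel: for $h\in H^1$,
\[
h(w)=\int_{\Sn}\frac{h(\zeta)}{(1-\langle w,\zeta\rangle)^n}\,d\sigma(\zeta).
\]
Starting from $\int_{\Sn}Q_\mu f(\zeta)\overline{g(\zeta)}\,d\sigma(\zeta)$ and writing $Q_\mu f(\zeta)=\int_{\Bn}\frac{f(w)}{(1-\langle\zeta,w\rangle)^n}\,d\mu(w)$, I would like to apply Fubini to move the $d\mu(w)$ integral outside, obtaining $\int_{\Bn}f(w)\bigl(\int_{\Sn}\frac{\overline{g(\zeta)}}{(1-\langle\zeta,w\rangle)^n}\,d\sigma(\zeta)\bigr)\,d\mu(w)$; since $g$ is a polynomial (in particular $g\in H^1$ with bounded conjugate data), the inner integral is exactly $\overline{g(w)}$ by the reproducing formula applied to $\overline{g}$ — more precisely, $\overline{\int_{\Sn}\frac{g(\zeta)}{(1-\langle w,\zeta\rangle)^n}\,d\sigma(\zeta)}=\overline{g(w)}$. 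This gives the desired equality. The Fubini step is legitimate because $g$ is bounded on $\overline{\Bn}$ and, approximating $Q_\mu f$ on the boundary via $r\to1^-$ (or using that $\mu$ is finite and the kernel is bounded on $\mathbb{B}_n\times r\Sn$ uniformly), the double integral $\int_{\Sn}\int_{\Bn}\frac{|f(w)|}{|1-\langle\zeta,w\rangle|^n}\,d\mu(w)\,d\sigma(\zeta)$ — while not obviously absolutely convergent — can be made so after the radial-limit passage, since $\int_{\Sn}|1-\langle\zeta,w\rangle|^{-n}\,d\sigma(\zeta)\lesssim 1+\log\frac1{1-|w|}$ is $\sigma$-integrable and $|f|\,d\mu$ is finite.

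**Alternative, cleaner route via the $R^{\alpha,t}$ identity.** Rather than fight the mild non-absolute convergence on the sphere, I would probably present it exactly as in the proof of Theorem \ref{T2}: pick $t>0$ large, write $\langle Q_\mu f,g\rangle_{H^2}$-type pairing as $\int_{\Bn}R^{-1,t}(Q_\mu f)(z)\,\overline{R^{t-1,t}g(z)}\,dv_{2t-1}(z)$ — valid since $Q_\mu f\in H^p$, $g\in H^{p'}$ and this is just the reproducing pairing on $H^2\times H^2$ extended by density/duality to $H^p\times H^{p'}$ — and then repeat verbatim the Fubini computation already carried out on page with the excerpt: $R^{-1,t}(Q_\mu f)(z)=\int_{\Bn}\frac{f(w)\,d\mu(w)}{(1-\langle z,w\rangle)^{n+t}}$, swap the order (justified because for polynomial-approximated $g$ the integrand is dominated, using $\mu(\Bn)<\infty$), use $\int_{\Bn}\frac{\overline{R^{t-1,t}g(z)}\,dv_{2t-1}(z)}{(1-\langle z,w\rangle)^{n+t}}=\overline{R_{t-1,t}R^{t-1,t}g(w)}=\overline{g(w)}$, and land on $\int_{\Bn}f(w)\,\overline{g(w)}\,d\mu(w)$.

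**Main obstacle.** The one genuinely delicate point is justifying the interchange of integration: $Q_\mu f$ is only known to lie in $H^p$ (not to be bounded), so one cannot naively pair its boundary values against $g$ and Fubini through the defining integral of $Q_\mu$ without an absolute-convergence argument. I expect to spend the bulk of the proof establishing, via finiteness of $\mu$ together with a Carleson/Forelli--Rudin estimate (Lemma \ref{IctBn}), that $\int_{\Bn}|f(w)|\,|R^{t-1,t}g(w)_{\text{aux}}|\,d\mu(w)$-type quantities are finite and that the relevant double integral converges absolutely after either the $R^{\alpha,t}$ reformulation or a routine dominated-convergence passage in the radial parameter; everything else is the reproducing-kernel identity and a density argument.
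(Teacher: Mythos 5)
Your first route is essentially the paper's own proof: for a holomorphic polynomial $g$ the paper writes the left-hand side as $\Lambda_g(Q_\mu f)$, where $\Lambda_g(h)=\int_{\Sn}h(\zeta)\overline{g(\zeta)}\,d\sigma(\zeta)$ is a bounded functional on $H^p$, pulls $\Lambda_g$ inside the $d\mu(w)$-integral, uses $\Lambda_g(K_w)=\langle K_w,g\rangle_{H^2}=\overline{g(w)}$, and finishes by density of the polynomials in $H^{p'}$ --- which is exactly your reproducing-kernel-plus-Fubini computation. The two points you labour over (absolute convergence in the interchange, and convergence of the right-hand side under polynomial approximation of $g$) are passed over without comment in the paper, so your write-up is, if anything, the more careful one.
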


\begin{proof}
If $g$ is a holomorphic polynomial, then
$$\Lambda_ g(h)=\int_{\Sn}h(\zeta)\overline{g(\zeta)}d\sigma(\zeta)$$ defines a bounded linear functional on $H^p$. Let $K_ w$ be the reproducing kernel of $H^2$ at the point $w$. Since $Q_{\mu} f\in H^p$, we have
\[
\begin{split}
\int_{\Sn} Q_\mu f(\zeta)\overline{g(\zeta)}d\sigma(\zeta)&=\Lambda_ g(Q_{\mu} f)=\Lambda_ g \left ( \int_{\Bn}\!\frac{f(w)}{(1-\langle \cdot ,w \rangle )^n}\,d\mu(w)  \right )\\
\\
&= \int_{\Bn} f(w) \,\Lambda_ g \left ( \frac{1}{(1-\langle \cdot ,w \rangle )^n}\right )\,d\mu(w)=\int_{\Bn} f(w) \,\Lambda_ g (K_ w)\,d\mu(w)\\
\\
&=\int_{\Bn} f(w) \,\langle K_ w,g \rangle _{H^2}\,d\mu(w)=\int_{\Bn} \!\!f(w)\,\overline{g(w)}\,d\mu(w).
\end{split}
\]
As the holomorphic polynomials are dense on $H^{p'}$, the result follows.
\end{proof}

It follows from Lemma \ref{L-Qmu1} that if $Q_\mu:H^p\to H^q$ is bounded, then its adjoint is $Q_\mu:H^{q'}\to H^{p'}$. If $p\le 2$, then  $p'\ge 2$ and from the previous cases (as $\frac{p'q'}{q'-p'}=\frac{pq}{p-q}=r$) we obtain that $\widetilde{\mu}$ is in $L^{r}(\Sn)$.

Hence we may assume that $1<q<2<p$. Let $\{a_k\}$ be a separated sequence in $\Bn$. Using the area function description of Hardy spaces obtained in Proposition \ref{FracArea}, Fubini's theorem, and the same test functions as before, we obtain

$$\int_{\Sn}\int_0^1 \left(\int_{\Gamma(\zeta)}\left|\sum_{k} \lambda_k r_k(t)R^{-1,1}(Q_\mu f_k)(z)\right|^2dv_{1-n}(z)\right)^{q/2}dt d\sigma(\zeta) \le C \|Q_\mu\|^q\|\lambda \|^q_{T^p_ 2},$$
where $r_k$ are the Rademacher functions. Proceeding with Kahane's inequality, we get
$$\int_{\Sn}\left(\int_0^1 \int_{\Gamma(\zeta)}\left|\sum_{k} \lambda_k r_k(t)R^{-1,1}(Q_\mu f_k)(z)\right|^2dv_{1-n}(z)dt\right)^{q/2} d\sigma(\zeta) \le C \|Q_\mu\|^q\|\lambda \|^q_{T^p_ 2}.$$

Next, Fubini's theorem together with Khinchine's inequality leads to

$$\int_{\Sn}\left( \int_{\Gamma(\zeta)}\sum_{k} |\lambda_k|^2 |R^{-1,1}(Q_\mu f_k)(z)|^2 dv_{1-n}(z)\right)^{q/2} d\sigma(\zeta) \le C \|Q_\mu\|^q\|\lambda \|^q_{T^p_ 2}.$$

Now, applying Lemma \ref{Gamma}, this gives for $\beta>2n/q$:

$$\int_{\Sn} \left[\sum_{k}|\lambda_k|^2 \int_{\Bn}  \left(\frac{1-|z|^2}{|1-\langle z,\zeta\rangle|}\right)^\beta |R^{-1,1}(Q_\mu f_k)(z)|^2 dv_{1-n}(z)\right]^{q/2}d\sigma(\zeta)\leq C\|Q_\mu\|^q\|\lambda \|^q_{T^p_ 2},$$
which implies
$$\int_{\Sn} \left[\sum_{k}|\lambda_k|^2 \left(\frac{1-|a_k|^2}{|1-\langle a_k,\zeta\rangle|}\right)^\beta\int_{D_k}   |R^{-1,1}(Q_\mu f_k)(z)|^2 dv_{1-n}(z)\right]^{q/2}d\sigma(\zeta)\leq C\|Q_\mu\|^q\|\lambda \|^q_{T^p_ 2},$$
where $D_ k$ denotes the Bergman metric ball $D(a_ k,r)$. Now, because $|1-\langle a_ k,\zeta \rangle | \asymp 1-|a_ k|$ for $a_ k\in \Gamma(\zeta)$, summing only over indices $k$ so that $a_k \in \Gamma(\zeta)$, we arrive at

\[
\int_{\Sn} \left ( \sum_ {a_ k \in \Gamma (\zeta)} |\lambda_ k|^2 \int_{D_k } |R^{-1,1}(Q_{\mu} f_ k)(z)|^ 2\,dv_{1-n}(z)\right )^{q/2} d\sigma(\zeta) \le C \|Q_\mu\|^q\|\lambda \|^q_{T^p_ 2}.
\]
By subharmonicity, we get
\begin{equation}\label{compare}
\int_{\Sn} \left ( \sum_ {a_ k \in \Gamma (\zeta)} |\lambda_ k|^2  |R^{-1,1}(Q_{\mu} f_ k)(a_ k)|^ 2\,(1-|a_ k|^2)^2\right )^{q/2} d\sigma(\zeta) \le C \|Q_\mu\|^q\|\lambda \|^q_{T^p_ 2}.
\end{equation}

Let us now denote $\nu=(\nu_k)$, where $\nu_k=\mu(Q(a_k))(1-|a_k|^2)^{-n}$ and set
$$s=\frac{p-q}{q(p-2)}.$$
By our choices $p>2>q$, it follows that $2s>qs>1$. Using \eqref{EqG} and two H\"olders (first with $2s$ and $(2s)'$, and then with $qs$ and $(qs)'$), we obtain
\begin{align*}
\sum_k |\lambda_k|^2 \nu_k^{1/s}(1-|a_k|)^n &\asymp \int_{\Sn} \left[ \sum_{a_k \in \Gamma(\zeta)} |\lambda_k|^2 \nu_k^{1/s}\right]d\sigma(\zeta)\\
&\leq \int_{\Sn} \left[ \sum_{a_k \in \Gamma(\zeta)} |\lambda_k|^2\right]^{(2s-1)/2s}\left[ \sum_{a_k \in \Gamma(\zeta)} |\lambda_k|^2 \nu_k^2 \right]^{1/2s}d\sigma(\zeta)\\
&\leq \|\lambda \|^{2-\frac{1}{s}}_{T^p_ 2}\cdot \left\lbrace\int_{\Sn}\left[ \sum_{a_k \in \Gamma(\zeta)}|\lambda_k|^2 \nu_k^2\right]^{q/2}d\sigma(\zeta)\right\rbrace^{1/qs}.
\end{align*}
Here we have used that
$$\frac{(2s-1)}{2s}\frac{qs}{(qs-1)}=\frac{p}{2}.$$

Since, by \eqref{estim}, we have  $\nu_k \lesssim |R^{-1,1}(Q_{\mu} f_ k)(a_ k)|\,(1-|a_ k|)$, we can combine this estimate with \eqref{compare}, and arrive at
$$\sum_k |\lambda_k|^2 \nu_k^{1/s}(1-|a_k|)^n \leq C\|Q_\mu\|^{1/s}\|\lambda^2 \|_{T^{p/2}_1}.$$

So, by the duality of tent spaces, we obtain $\|\nu^{1/s}\|_{T^{p/(p-2)}_\infty} \leq C\|Q_\mu\|^{1/s}$. Note that $(1/s)p/(p-2)=r$, so we get
$$\|\nu\|_{T^r_\infty}\leq C \|Q_\mu\|.$$

An application of Lemma \ref{discrete} finally shows that $\widetilde{\mu} \in L^r(\Sn)$ with $\|\widetilde{\mu}\|_{L^r}\lesssim \|Q_{\mu}\|$ as claimed.

\section{Compactness}\label{sComp}
For $1<p,q<\infty$, a linear operator $T:H^p\rightarrow H^q$ is compact if $\|Tf_ n\|_{H^q}\rightarrow 0$ whenever $\{f_ n\}$ is a bounded sequence in $H^p$ converging to zero uniformly on compact subsets of $\Bn$.\\

Recall also that a finite Borel measure on $\Bn$ is called a vanishing $s$-Carleson measure if for every $\zeta \in \Sn$
$$\mu(B_{\delta}(\zeta))\delta^{-ns}\to 0$$ as $\delta \to 0$. Equivalently, one may require
that for each (some) $t>0$ one has
\begin{equation}\label{sCM-1}
\lim_{|a|\rightarrow 1^-}\int_{\Bn} \!\!\frac{(1-|a|^2)^t}{|1-\langle a,w \rangle |^{ns+t}} \,d\mu(w)=0.
\end{equation}
Now we are ready for the description of the compactness of the Toeplitz type operator $Q_{\mu}$ acting between Hardy spaces.

\begin{theorem}\label{mtC-1}
Let $1<p\le q<\infty$ and $\mu$ be a positive Borel measure on $\Bn$. Then $Q_{\mu}: H^p\rightarrow H^q$ is compact if and only if $\mu$ is a vanishing $(1+\frac{1}{p}-\frac{1}{q})$-Carleson measure.
\end{theorem}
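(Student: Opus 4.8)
The plan is to mirror the structure of the proof of Theorem \ref{T2}, replacing the $s$-Carleson condition by its vanishing counterpart \eqref{sCM-1} throughout, and exploiting the fact that compactness of $Q_\mu$ can be tested on sequences tending weakly to zero. Set $s=1+\frac1p-\frac1q$ as before.

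For the \textbf{necessity} direction, I would suppose $Q_\mu:H^p\to H^q$ is compact and run the same test-function argument as in the necessity part of Theorem \ref{T2}, but now with a \emph{normalized} family. Fix a sequence $\{z_m\}\subset\Bn$ with $|z_m|\to 1$ and consider $f_m(w)=(1-|z_m|^2)^{n(p-1)/p}(1-\langle w,z_m\rangle)^{-n}$, so that $\|f_m\|_{H^p}\lesssim 1$ (by the estimate on $\|K_z\|_{H^p}$ quoted in the necessity proof of Theorem \ref{T2}) and $f_m\to 0$ uniformly on compact subsets of $\Bn$. Compactness then forces $\|Q_\mu f_m\|_{H^q}\to 0$, and combining this with the pointwise estimate $|Q_\mu f_m(z_m)|\le (1-|z_m|^2)^{-n/q}\|Q_\mu f_m\|_{H^q}$ exactly as in Theorem \ref{T2} yields
\[
(1-|z_m|^2)^{n(p-1)/p}\int_{\Bn}\frac{d\mu(w)}{|1-\langle z_m,w\rangle|^{2n}}=(1-|z_m|^2)^{-n/q+n(p-1)/p}\,|Q_\mu f_m(z_m)|\cdot(1-|z_m|^2)^{n/q}\longrightarrow 0.
\]
Since $\{z_m\}$ with $|z_m|\to 1$ was arbitrary, this says precisely that the integral in \eqref{sCM-1} (with $t=n(\frac1q+\frac{p-1}{p})$ and $ns+t=2n$) tends to $0$ as $|a|\to 1^-$, i.e.\ $\mu$ is a vanishing $s$-Carleson measure.

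For the \textbf{sufficiency} direction, assume $\mu$ is a vanishing $s$-Carleson measure and take a bounded sequence $\{f_m\}\subset H^p$ with $f_m\to 0$ uniformly on compacta. Write $\mu=\mu\chi_{\{|z|\le \rho\}}+\mu\chi_{\{|z|>\rho\}}=:\mu_\rho+\mu^\rho$. The piece $Q_{\mu_\rho}$ has a compactly supported symbol; for it I would check directly that $\|Q_{\mu_\rho}f_m\|_{H^q}\to 0$, using that $f_m\to 0$ uniformly on $\{|z|\le\rho\}$ together with the kernel bound $|1-\langle z,w\rangle|^{-n}$ being bounded when $w$ stays in a compact set and the $H^q$-norm being controlled by, e.g., an $L^\infty$-type estimate on the resulting holomorphic function (or simply by dominating $\|Q_{\mu_\rho}f_m\|_{H^q}\lesssim \mu(\Bn)\sup_{|w|\le\rho}|f_m(w)|$ after noting $\sup_{z\in\Bn}\int_{|w|\le\rho}|1-\langle z,w\rangle|^{-nq}\,dv<\infty$ and applying the boundedness machinery with the finite measure $\mu_\rho$). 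For the tail piece, the key point is that $\|\mu^\rho\|_{CM_s}\to 0$ as $\rho\to 1^-$ — this is the standard equivalence between the vanishing Carleson condition and smallness of the Carleson norms of the tails, which follows from \eqref{sCM-1} and \eqref{sCM} by splitting the supremum in \eqref{sCM} over $|a|$ close to and away from the boundary. Then Theorem \ref{T2} applied to $\mu^\rho$ gives $\|Q_{\mu^\rho}\|_{H^p\to H^q}\lesssim\|\mu^\rho\|_{CM_s}$, so
\[
\limsup_{m\to\infty}\|Q_\mu f_m\|_{H^q}\le \limsup_{m\to\infty}\|Q_{\mu_\rho}f_m\|_{H^q}+\sup_m\|f_m\|_{H^p}\cdot\|Q_{\mu^\rho}\|_{H^p\to H^q}\lesssim \|\mu^\rho\|_{CM_s},
\]
and letting $\rho\to 1^-$ shows $\|Q_\mu f_m\|_{H^q}\to 0$, proving compactness.

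The main obstacle I anticipate is the clean handling of the compactly supported piece $Q_{\mu_\rho}$: one must argue that uniform-on-compacta convergence of $\{f_m\}$ actually produces $H^q$-norm convergence of $Q_{\mu_\rho}f_m$, which requires a little care because the kernel $(1-\langle z,w\rangle)^{-n}$ is not bounded as $z\to\partial\Bn$ even for $w$ in a compact set. The resolution is that, with $w$ restricted to $\{|w|\le\rho\}$, the function $z\mapsto\int_{|w|\le\rho}f_m(w)(1-\langle z,w\rangle)^{-n}d\mu(w)$ is still controlled in $H^q$ by $\mu(\Bn)\sup_{|w|\le\rho}|f_m(w)|$ up to a constant depending only on $\rho$ and $q$ (this is a Forelli--Rudin type bound, Lemma \ref{IctBn}), so compact-set convergence of $f_m$ suffices. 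Everything else is a routine adaptation of the bounded case, so I would keep the write-up brief, pointing to Theorem \ref{T2} for the quantitative estimates and to the standard vanishing-Carleson tail estimate.
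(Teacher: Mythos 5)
Your necessity argument is exactly the paper's: test on the normalized kernels $(1-|z|^2)^{n/p'}K_z$, which are bounded in $H^p$ and tend to zero on compacta, and combine compactness with the pointwise estimate to read off \eqref{sCM-1} with $t=n(\tfrac1q+\tfrac{p-1}{p})$. (One cosmetic slip: in your displayed identity the left-hand side should carry the factor $(1-|z_m|^2)^{\,n/q+n(p-1)/p}$ rather than $(1-|z_m|^2)^{\,n(p-1)/p}$, since $|Q_\mu f_m(z_m)|$ already contains one power of the normalization; the conclusion you then state, with the correct $t$ and $ns+t=2n$, is the right one.) For sufficiency your route genuinely differs from the paper's. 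The paper does not decompose $\mu$ at all: it writes $\|Q_\mu f_m\|_{H^q}=\sup_{\|g\|_{H^{q'}}=1}|\langle Q_\mu f_m,g\rangle|\le \sup_g\int_{\Bn}|f_m g|\,d\mu\lesssim \|f_m\|_{L^{ps}(\mu)}$ using Lemma \ref{L-Qmu1}, H\"older and Carleson--Duren applied to $g$, and then invokes the compactness of the embedding $H^p\to L^{ps}(\mu)$ for vanishing $s$-Carleson measures. You instead split $\mu=\mu_\rho+\mu^\rho$, control the tail by the standard estimate $\|\mu^\rho\|_{CM_s}\to 0$ (which does need the short argument you indicate, splitting the supremum in \eqref{sCM} according to whether $|a|$ is near the boundary) combined with the norm bound of Theorem \ref{T2}, and handle the compactly supported piece by hand. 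Both arguments are correct; the paper's is shorter but outsources the work to the known compact-embedding theorem, while yours is self-contained modulo the tail estimate. One remark on your ``main obstacle'': it is not actually there. For $|w|\le\rho$ and $z\in\Bn$ one has $|1-\langle z,w\rangle|\ge 1-|z|\,|w|\ge 1-\rho$, so the kernel is bounded by $(1-\rho)^{-n}$ uniformly in $z\in\Bn$; hence $\|Q_{\mu_\rho}f_m\|_{H^q}\le \|Q_{\mu_\rho}f_m\|_{H^\infty}\le (1-\rho)^{-n}\mu(\Bn)\sup_{|w|\le\rho}|f_m(w)|\to 0$ directly, with no Forelli--Rudin estimate required (finiteness of $\mu$ being automatic for a Carleson measure).
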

\begin{proof}
Assume first that $Q_{\mu}$ is compact. In the proof of the boundedness, we have seen that
\[
\int_{\Bn}\frac{(1-|z|^2)^{n/p'}d\mu(w)}{|1-\langle w,z \rangle |^{2n}} =|Q_{\mu}k_ z (z)| \le (1-|z|^2)^{-n/q}\big \|Q_{\mu}k_ z  \big \|_{H^q}.
\]
Assuming that $Q_ {\mu}$ compact, we know that $\|Q_{\mu}k_ z\|_{H^q}\rightarrow 0$ as $|z|\rightarrow 1^{-}$, and the result follows by \eqref{sCM-1}.

Conversely, suppose that $\mu$ is a vanishing $s$-Carleson measure with $s=1+1/p-1/q$. To prove the compactness of $Q_{\mu}$ we must show that $\|Q_{\mu} f_ n\|_{H^q}\rightarrow 0$ if $\{f_ n\}$ is a bounded sequence in $H^p$ that converges to zero uniformly on compact subsets of $\Bn$. As $\mu$ is an $s$-Carleson measure, by Theorem \ref{T2}, the Toeplitz operator  $Q_{\mu}:H^p\rightarrow H^q$ is bounded. Hence, by duality, Lemma \ref{L-Qmu1} and Carleson-Duren's theorem (argue as in the proof of Theorem \ref{T2}), we get
\[
\|Q_{\mu} f_ n\|_{H^q}=\sup_{\|g\|_{H^{q'}}=1} |\langle Q_{\mu} f_ n,g \rangle |\le \sup_{\|g\|_{H^{q'}}=1} \int_{\Bn} |f_ n(z)|\,|g(z)| \,d\mu(z)
\lesssim \|f_ n \|_{L^{ps}(\mu)}
\]
Since $\mu$ is a vanishing $s$-Carleson measure, then the embedding $i_{\mu}: H^p\rightarrow L^{ps} (\mu)$ is compact, and therefore $\|f_ n\|_{L^{ps}(\mu)} \rightarrow 0$ which proves that $\|Q_{\mu} f_ n\|_{H^q} \rightarrow 0$ finishing the proof.
\end{proof}

We also present the following, seemingly stronger version of Theorem \ref{T3}.

\begin{theorem}
Let $1<q<p<\infty$ and $\mu$ be a positive Borel measure on $\Bn$. Then $Q_{\mu}: H^p\rightarrow H^q$ is compact if and only if it is bounded, if and only if $\widetilde{\mu}\in L^r(\Sn)$, where $r=pq/(p-q)$.
\end{theorem}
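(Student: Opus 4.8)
The plan is to run the cycle of implications
\[
Q_{\mu}:H^p\to H^q \text{ bounded}\ \Longrightarrow\ \widetilde{\mu}\in L^r(\Sn)\ \Longrightarrow\ Q_{\mu}:H^p\to H^q \text{ compact}\ \Longrightarrow\ Q_{\mu} \text{ bounded}.
\]
The first implication is exactly Theorem \ref{T3}, and the third is immediate. Hence the only genuinely new statement is that $\widetilde{\mu}\in L^r(\Sn)$ forces compactness — the standard upgrade available when the target exponent is strictly smaller than the source one. So from now on I assume $\widetilde{\mu}\in L^r(\Sn)$; in particular, by Theorem \ref{T3}, $Q_{\mu}:H^p\to H^q$ is bounded.

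First I would reduce to a decay estimate for truncations of the symbol. For $0<\rho<1$ set $\mu_{\rho}=\mu|_{\{|z|\le\rho\}}$ and $\mu^{\rho}=\mu|_{\{|z|>\rho\}}$, so that $Q_{\mu}=Q_{\mu_{\rho}}+Q_{\mu^{\rho}}$. For each fixed $\rho$, the operator $Q_{\mu_{\rho}}$ is compact: if $\{f_n\}$ is bounded in $H^p$ and converges to $0$ uniformly on compact subsets of $\Bn$, then the pointwise bound
\[
|Q_{\mu_{\rho}}f_n(z)|\le\Big(\sup_{|w|\le\rho}|f_n(w)|\Big)\int_{|w|\le\rho}\frac{d\mu(w)}{|1-\langle z,w\rangle|^n}\le\frac{\mu(\Bn)}{(1-\rho)^n}\,\sup_{|w|\le\rho}|f_n(w)|
\]
gives $\|Q_{\mu_{\rho}}f_n\|_{H^q}\to0$. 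Since a norm limit of compact operators is compact (which is checked directly from the definition of compactness used here), it suffices to show that $\|Q_{\mu}-Q_{\mu_{\rho}}\|_{H^p\to H^q}=\|Q_{\mu^{\rho}}\|_{H^p\to H^q}\to0$ as $\rho\to1^{-}$.

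This last limit is the crux of the argument, and I would obtain it by applying Theorem \ref{T3} once more, now to the measure $\mu^{\rho}$: one has $\|Q_{\mu^{\rho}}\|_{H^p\to H^q}\asymp\|\widetilde{\mu^{\rho}}\|_{L^r(\Sn)}$, where
\[
\widetilde{\mu^{\rho}}(\zeta)=\int_{\Gamma(\zeta)\cap\{|z|>\rho\}}(1-|z|^2)^{-n}\,d\mu(z)\le\widetilde{\mu}(\zeta).
\]
Since $\widetilde{\mu}\in L^r(\Sn)$, it is finite a.e.\ on $\Sn$; and because $\bigcap_{\rho<1}\{z\in\Bn:|z|>\rho\}=\emptyset$, for a.e.\ $\zeta$ the numbers $\widetilde{\mu^{\rho}}(\zeta)$ decrease to $0$ as $\rho\to1^{-}$. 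With $\widetilde{\mu}\in L^r(\Sn)$ serving as an integrable majorant, the dominated convergence theorem yields $\|\widetilde{\mu^{\rho}}\|_{L^r(\Sn)}\to0$, hence $\|Q_{\mu^{\rho}}\|_{H^p\to H^q}\to0$, and the proof is complete.

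The argument is essentially routine once Theorem \ref{T3} is available; the only step needing a little care — and the one I would regard as the main (if mild) obstacle — is the dominated convergence step, where one must use that $\mu$ is carried by the open ball (so every point is eventually excluded from $\{|z|>\rho\}$) and that the \emph{integrability} of $\widetilde{\mu}$, not just its a.e.\ finiteness, supplies the majorant. As an alternative, one could avoid truncating the operator and argue by duality and Lemma \ref{L-Qmu1}, reducing compactness of $Q_{\mu}$ to compactness of the Carleson embedding $H^p\hookrightarrow L^s(\mu)$ with $s=p+1-p/q$ (note that $p/(p-s)=r$), and then apply the same splitting of $\mu$ together with Luecking's theorem (Theorem \ref{LT}); this is the same idea in different clothing.
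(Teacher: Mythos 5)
Your proposal is correct and follows essentially the same route as the paper: truncate $\mu$ near the origin, observe that the truncated operator is compact, and control the tail via the norm estimate of Theorem \ref{T3} applied to $\mu^{\rho}$ together with dominated convergence using $\widetilde{\mu}\in L^r(\Sn)$ as majorant. The only cosmetic difference is that you verify compactness of $Q_{\mu_{\rho}}$ by a direct pointwise bound, whereas the paper uses duality and Lemma \ref{L-Qmu1}; both work.
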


\begin{proof} In view of Theorem \ref{T3}, it suffices to show that the condition  $\widetilde{\mu}\in L^r(\Sn)$ implies the compactness of $Q_{\mu}$.

For any compact $K\subset \Bn$, we set $\mu_K=\mu\chi_K$. Suppose that $\{f_n\}$ is a bounded sequence in $H^p$ converging to zero uniformly on compact subsets of $\Bn$. If $g$ is an arbitrary unit vector in $H^{q'}$, then by the standard pointwise estimate we have $|g(z)|\leq C_K$ on $K$ (uniformly on $g$). Hence, by duality and Lemma \ref{L-Qmu1},
\begin{align*}
\big \|Q_{\mu_ K} f_ n \big \|_{H^q}&=\sup_{\|g\|_{H^{q'}}=1}|\langle Q_{\mu_K}f_n,g\rangle|
\leq \sup_{\|g\|_{H^{q'}}=1} \int_K |f_n(z)g(z)|d\mu(z)\\
&\leq \,C_K\int_K|f_n(z)|d\mu(z)\to 0.
\end{align*}
So $Q_{\mu_K}$ is compact. Next, define $\mu_s=\mu\chi_{\overline{D(0,s)}}$, so that $Q_{\mu_s}$ is compact as just shown. Now,
$$\|\widetilde{\mu}-\widetilde{\mu_s}\|_{L^r}^r=\int_{\mathbb{S}^n} \left|\int_{\Gamma(\zeta)}\frac{(1-\chi_{\overline{D(0,s)}})(z)d\mu(z)}{(1-|z|^2)^n}\right|^rd\sigma(\zeta).$$
Write
$$\Phi_s(\zeta)=\left|\int_{\Gamma(\zeta)}\frac{\phi_s(z)d\mu(z)}{(1-|z|^2)^n}\right|^r,$$
where $\phi_s(z)=(1-\chi_{\overline{D(0,s)}})(z)$.
Because $\widetilde{\mu}\in L^r(\Sn)$, the function $\Phi_0$ is defined almost everywhere. Therefore, for almost every $\zeta \in \Sn$, the function
$$\frac{\phi_s(z)d\mu(z)}{(1-|z|^2)^n}$$ has an integrable majorant. Therefore, $\Phi_s(\zeta)\to 0$ as $s\to 1$ for almost every $\zeta \in \Sn$, by the Lebesgue dominated convergence theorem. Remembering that $\widetilde{\mu}\in L^r(\Sn)$, we see that $\Phi_s$ has also an integrable majorant, namely $\Phi_0$. Thus $\widetilde{\mu_s}\to \widetilde{\mu}$ in $L^r(\Sn)$, by another application of the dominated convergence theorem. By the norm estimate for $Q_\mu$ we have
\[
\big \|Q_{\mu}-Q_{\mu_ s} \big \| \lesssim \big \| \widetilde{\mu}-\widetilde{\mu_ s} \big \|_{L^r} \rightarrow 0
\]
as $s\rightarrow 1$, proving that $Q_{\mu}$ is compact.
\end{proof}

\section{Schatten class membership}\label{SSC}
In this section we are going to describe the membership of $Q_{\mu}$ in the Schatten ideals $S_ p(H^2)$. \\

Let $t>0$ and define
\[
S_ t \mu(w)=(1-|w|^2)^{n+t}\int_{\Bn}  \frac{d\mu(z)}{|1-\langle z,w \rangle |^{2n+t}}.
\]

We will denote by $d\lambda_n(z):=dv_{-1-n}(z)$ the invariant measure. For technical reasons, it will be convenient to denote

$$t_p=\max(n/p-n,0).
$$

\begin{proposition}\label{S-P1}
Let $\mu$ be a positive Borel measure on $\Bn$, and let $0<p<\infty$. The following conditions are equivalent:
\begin{enumerate}
\item[(a)] $S_ t\mu\in L^p(\Bn,d\lambda_ n)$ for all (some) $t>t_p$.

\item[(b)] The sequence $\Big\{\frac{\mu(D(a_ k,r))}{(1-|a_ k|^2)^n}\Big\}$ is in $\ell^p$, for any $r$-lattice $\{a_ k\}$.
\end{enumerate}
\end{proposition}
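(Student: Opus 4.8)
The plan is to prove the equivalence (a) $\Leftrightarrow$ (b) in Proposition \ref{S-P1} by passing through the discrete quantity $\nu_k := \mu(D(a_k,r))/(1-|a_k|^2)^n$ and comparing the ``continuous'' expression $S_t\mu$ with its values on a lattice. First I would observe that $\lambda_n$ is invariant and that $\lambda_n(D(a_k,r)) \asymp 1$ for any fixed $r$, so that $\|S_t\mu\|_{L^p(\lambda_n)}^p \asymp \sum_k \big(\sup_{w\in D(a_k,r)} S_t\mu(w)\big)^p$ when $p\leq 1$ (just summing over the lattice balls, using the finite-overlap property (iii)), and $\asymp \sum_k \big(\inf_{w\in D(a_k,r)} S_t\mu(w)\big)^p$ from below. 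To make this useful I need the key fact that $S_t\mu$ is (up to constants) comparable at points of the same Bergman ball; this follows because $1-|w|^2 \asymp 1-|w'|^2$ and $|1-\langle z,w\rangle| \asymp |1-\langle z,w'\rangle|$ whenever $\beta(w,w')$ is bounded — the latter uniformly in $z\in\Bn$, which is a standard consequence of the estimates for the Bergman metric (e.g. Lemma 2.20 / Lemma 1.24-type estimates in \cite{ZhuBn}). Hence $\|S_t\mu\|_{L^p(\lambda_n)}^p \asymp \sum_k S_t\mu(a_k)^p$, reducing everything to comparing $\{S_t\mu(a_k)\}$ with $\{\nu_k\}$ in $\ell^p$.

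Next I would prove the two-sided estimate $S_t\mu(a_k) \asymp \sum_j \nu_j \Big(\frac{(1-|a_k|^2)(1-|a_j|^2)}{|1-\langle a_k,a_j\rangle|^2}\Big)^{(2n+t)/2} \cdot \frac{(1-|a_j|^2)^{?}}{\cdots}$ — more precisely, writing $d\mu$ restricted to $D(a_j,r)$ and using $1-|z|^2\asymp 1-|a_j|^2$, $|1-\langle z,a_k\rangle|\asymp |1-\langle a_j,a_k\rangle|$ on that ball, one gets
\[
S_t\mu(a_k) \asymp (1-|a_k|^2)^{n+t}\sum_j \frac{\mu(D(a_j,r))}{|1-\langle a_j,a_k\rangle|^{2n+t}} = \sum_j \nu_j \,\frac{(1-|a_k|^2)^{n+t}(1-|a_j|^2)^n}{|1-\langle a_j,a_k\rangle|^{2n+t}}.
\]
(One must cover $\Bn$ by the balls $D(a_j,r)$, which inflates overlap by at most $N$; for the lower bound one just keeps the single term $j=k$, giving $S_t\mu(a_k) \gtrsim \nu_k$, which already yields (a) $\Rightarrow$ (b).) For the converse, the matrix $T_{kj} = \frac{(1-|a_k|^2)^{n+t}(1-|a_j|^2)^n}{|1-\langle a_j,a_k\rangle|^{2n+t}}$ must be shown to act boundedly on $\ell^p$ (all $0<p<\infty$); the case $t > t_p$ is exactly what guarantees the Schur-test / Young-inequality exponents work out. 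For $p\geq 1$ I would use Schur's test with weights $(1-|a_k|^2)^{-\epsilon}$, invoking the discrete Forelli–Rudin estimate Lemma \ref{l2} (valid since $n < n+t$ once the weight exponents are tuned), together with the continuous Lemma \ref{IctBn} comparison $\sum_j (1-|a_j|^2)^{n+c}|1-\langle a_j,a_k\rangle|^{-s} \asymp \int_{\Bn}(1-|z|^2)^{c-1}|1-\langle z,a_k\rangle|^{-s}dv(z)$. For $0<p<1$ I would instead use the elementary inequality $\big(\sum_j x_j\big)^p \leq \sum_j x_j^p$ and then sum $\sum_k T_{kj}^p$ in $k$, which converges precisely when $p(2n+t) - pn > n$, i.e. $t > n/p - n = t_p$ (and the condition is vacuous, $t_p=0$, when $p\geq 1$).

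The main obstacle I expect is getting the exponent bookkeeping in the $\ell^p$-boundedness of $T$ exactly right and uniform across the ranges $p<1$, $p=1$, $p>1$ — in particular verifying that $t>t_p$ is both necessary and sufficient for the relevant sums/integrals to converge, and that the Schur weights can be chosen in the open range left by $t>t_p$. A secondary technical point is the ``for all (some) $t$'' clause: I would handle it by noting that for $t' > t > t_p$ one has $S_{t'}\mu(w) \lesssim S_t\mu(w)$ pointwise is \emph{false} in general, so instead I would prove each of (a)-for-some-$t$ and (a)-for-all-$t$ separately equivalent to (b); since (b) does not mention $t$, this closes the loop. The comparison $S_t\mu(a_k)\asymp \sum_j \nu_j T_{kj}$ is independent of which $t>t_p$ we pick, so once (b) $\Leftrightarrow \{S_t\mu(a_k)\}\in\ell^p$ is established for one such $t$, it holds for all, completing the proof.
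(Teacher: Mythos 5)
Your proposal is correct and follows essentially the same route as the paper: the lower bound comes from restricting both integrals to a single lattice ball where $1-|z|^2\asymp 1-|a_k|^2\asymp|1-\langle z,w\rangle|$, and the upper bound comes from covering $\Bn$ by the balls $D(a_j,r)$ and then running a Schur-test/H\"older argument with small $\varepsilon$-weights for $p>1$ and the inequality $(\sum x_j)^p\le\sum x_j^p$ for $p\le 1$, with $t>t_p$ entering exactly where you say; your full discretization of $\|S_t\mu\|_{L^p(\lambda_n)}$ to $\sum_k S_t\mu(a_k)^p$ versus the paper's retention of the $z$-integral (finished with Lemma \ref{IctBn} rather than only Lemma \ref{l2}) is a cosmetic difference. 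One small side remark of yours is wrong, though harmlessly so: since $(1-|w|^2)/|1-\langle z,w\rangle|\le 2$ for all $z,w\in\Bn$, one does have $S_{t'}\mu(w)\le 2^{t'-t}S_t\mu(w)$ pointwise for $t'>t$, but your workaround of proving each version of (a) equivalent to the $t$-free condition (b) is valid and is in fact how the paper handles the ``all/some'' clause.
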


\begin{proof}
Denote by $D_k=D(a_k,r)$. By the properties of the lattice we have
\[
\begin{split}
\|S_ t \mu \|^p_{L^p(\lambda_ n)}&=\int_{\Bn} \left ( (1-|z|^2)^{n+t}\int_{\Bn} \frac{d\mu(w)}{|1-\langle z,w \rangle |^{2n+t}}\right )^p d\lambda_ n(z)\\
& \gtrsim \sum_ k \int_{D_ k} \left ( (1-|z|^2)^{n+t}\int_{D_ k} \frac{d\mu(w)}{|1-\langle z,w \rangle |^{2n+t}}\right )^p d\lambda_ n(z)\\
& \asymp \sum_ k \left (\frac{\mu(D_ k)}{(1-|a_ k|^2 )^{n}}\right )^p,
\end{split}
\]
because $|1-\langle z,w \rangle | \asymp (1-|z|^2)\asymp (1-|a_ k|^2)$ for $z,w\in D_ k$ (see \cite[Lemma 2.20]{ZhuBn}). Thus (a) implies (b) when $t>0$.

Assume that (b) holds. As the sets $D_ k$ cover $\Bn$ and $|1-\langle z,w \rangle |\asymp |1-\langle a_ k,w \rangle |$ for $z\in D_ k$ (by the estimate (2.20) in p.63 of \cite{ZhuBn}), we have
\[
S_ t \mu (w) \le (1-|w|^2)^{n+t} \sum_ k \int_{D_ k}\frac{d\mu(z)}{|1-\langle z,w \rangle |^{2n+t}}\lesssim (1-|w|^2)^{n+t}\sum_ k \frac{\mu(D_ k)}{|1-\langle a_ k,w \rangle|^{2n+t}}.
\]
This gives
\begin{equation}\label{EqP1}
S_ t \mu(w)^p \lesssim (1-|w|^2)^{np+tp}\left (\sum_ k \frac{\mu(D_ k)}{|1-\langle a_ k,w \rangle|^{2n+t}}\right )^{p}.
\end{equation}

For $p>1$, we take $0<\varepsilon<n\min (\frac{1}{p},\frac{1}{p'})$, and use H\"older's inequality and Lemma \ref{l2} to get
\begin{displaymath}
\begin{split}
S_ t \mu(w)^p
&\lesssim (1-|w|^2)^{(n+t)p} \left (\sum_ k \frac{\mu(D_ k)^p\,(1-|a_ k|^2)^{-n(p-1)-\varepsilon p}}{|1-\langle a_ k,w \rangle|^{2n+tp}}\right )
\left (\sum_ k \frac{(1-|a_ k|^2)^{n+\varepsilon p'}}{|1-\langle a_ k,w \rangle|^{2n}}\right )^{p/p'}
\\
&\lesssim (1-|w|^2)^{\varepsilon p+tp+n} \left (\sum_ k \frac{\mu(D_ k)^p\,(1-|a_ k|^2)^{-n(p-1)-\varepsilon p}}{|1-\langle a_ k,w \rangle|^{2n+tp}}\right ).
\end{split}
\end{displaymath}
Therefore, by the integral type estimate in Lemma \ref{IctBn},
\begin{displaymath}
\begin{split}
\int_{\Bn} S_ t\mu(w)^p \,d\lambda_ n (w)&\lesssim \sum_ k \left (\frac{\mu(D_ k)}{(1-|a_ k|^2)^n} \right )^p (1-|a_ k|^2)^{n-\varepsilon p}
\int_{\Bn} \frac{(1-|w|^2)^{(\varepsilon+t)p-1} dv(w)}{|1-\langle a_ k,w \rangle|^{2n+tp}}
\\
&\lesssim \sum_ k \left (\frac{\mu(D_ k)}{(1-|a_ k|^2)^n} \right )^p,
\end{split}
\end{displaymath}
and we get the result in this case.\\
\mbox{}
\\
If $0<p\le 1$, starting from \eqref{EqP1} we get
\[
S_ t \mu(w)^p \lesssim (1-|w|^2)^{np+tp}\sum_ k \frac{\mu(D_ k)^p}{|1-\langle a_ k,w \rangle|^{2np+tp}}.
\]
Then
\begin{displaymath}
\begin{split}
\int_{\Bn}  S_ t \mu(w)^p\,d\lambda_ n(w)
\lesssim \sum_ k \mu(D_ k)^p \int_{\Bn} \frac{(1-|w|^2)^{np+tp}}{|1-\langle w,a_ k \rangle |^{2np+tp}}\,d\lambda_ n(w)
\end{split}
\end{displaymath}
and the result follows from Lemma \ref{IctBn} because $t>t_ p$.
\end{proof}

Now we state the main result of this section, that characterizes the membership in $S_ p(H^2)$ of the Toeplitz type operator $Q_{\mu}$.

\begin{theorem}\label{t-SC}
Let $0<p<\infty$ and $\mu$ be a positive Borel measure on $\Bn$. The following are equivalent:
\begin{enumerate}
\item[(a)]  $Q_{\mu}$ belongs to $S_ p(H^2)$;

\item[(b)] $S_ t\mu \in L^p(\Bn,d\lambda_ n)$ for all (some) $t>t_p$;
\item[(c)] for any $r$-lattice $\{a_ k\}$, we have
$$\sum _ k \left ( \frac{\mu(D(a_ k,r))}{(1-|a_ k|^2)^n}\right )^p <\infty.$$
\end{enumerate}
\end{theorem}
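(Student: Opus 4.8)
The plan is to establish the chain $(a)\Leftrightarrow(c)$, since $(b)\Leftrightarrow(c)$ is exactly Proposition~\ref{S-P1}. I fix once and for all an $r$-lattice $\{a_k\}$ with $r$ small, write $D_k=D(a_k,r)$ and $\gamma_k=\mu(D_k)/(1-|a_k|^2)^n$, and use that by Lemma~\ref{L-Qmu1} (with $p=2$) the operator $Q_\mu$ is positive on $H^2$, with $\langle Q_\mu f,f\rangle_{H^2}=\int_{\Bn}|f|^2\,d\mu$; equivalently $Q_\mu=J^*J$ for the inclusion $J\colon H^2\to L^2(\mu)$. Testing the quadratic form on the normalized reproducing kernels $k_a(z)=(1-|a|^2)^{n/2}(1-\langle z,a\rangle)^{-n}$ and restricting the integral to $D(a,r)$ shows that $\langle Q_\mu k_a,k_a\rangle_{H^2}$ is exactly the Berezin transform $S_0\mu(a)$ of $Q_\mu$ and satisfies $\langle Q_\mu k_a,k_a\rangle\gtrsim\mu(D(a,r))/(1-|a|^2)^n$; in particular $\langle Q_\mu k_{a_k},k_{a_k}\rangle\gtrsim\gamma_k$. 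Note also that $Q_\mu\in S_p$ forces $Q_\mu$ bounded, hence $\sup_k\gamma_k<\infty$ (i.e.\ $\mu$ is a $1$-Carleson measure) by Theorem~\ref{T2}, a fact that is free in the necessity direction.

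For the sufficiency $(c)\Rightarrow(a)$, decompose $\Bn=\bigsqcup_k E_k$ with $E_k=D_k\setminus\bigcup_{j<k}D_j$, set $\mu_k=\mu\chi_{E_k}$, so $Q_\mu=\sum_k Q_{\mu_k}$. Each $Q_{\mu_k}$ is positive with $\mathrm{tr}\,Q_{\mu_k}=\int_{E_k}(1-|w|^2)^{-n}d\mu\asymp(1-|a_k|^2)^{-n}\mu(E_k)=:\delta_k\le\gamma_k$; moreover, a M\"obius change of variables identifies $Q_{\mu_k}$, up to uniformly bounded factors, with an operator built from the restriction of $H^2$ to a \emph{fixed} relatively compact Bergman ball, whose singular numbers decay faster than any power, whence $\|Q_{\mu_k}\|_{S_s}\lesssim_s\delta_k$ uniformly in $k$ for every $s>0$. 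For $0<p\le1$ the quasi-triangle inequality for $S_p$ then gives $\|Q_\mu\|_{S_p}^p\le\sum_k\|Q_{\mu_k}\|_{S_p}^p\lesssim\sum_k\delta_k^p\le\sum_k\gamma_k^p<\infty$. For $p>1$ the triangle inequality is too lossy, and instead I would interpolate: the linear map $\Psi(\{c_k\})=\sum_k c_k\,\delta_k^{-1}Q_{\mu_k}$ is bounded $\ell^1\to S_1(H^2)$ by the trace bound, and bounded $\ell^\infty\to B(H^2)$ because $\langle\Psi(c)f,f\rangle\lesssim\|c\|_\infty\sum_k(1-|a_k|^2)^n\sup_{D_k}|f|^2\lesssim\|c\|_\infty\|f\|_{H^2}^2$ (the last step using that $\sum_k(1-|a_k|^2)^n\delta_{a_k}$ is a Carleson measure for a separated sequence); complex interpolation between $S_1$ and $B(H^2)$ gives $\Psi\colon\ell^p\to S_p$, and $Q_\mu=\Psi(\{\delta_k\})$ with $\{\delta_k\}\in\ell^p$ by $(c)$.

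For the necessity $(a)\Rightarrow(c)$ when $p\ge1$: since $\{a_k\}$ is separated, $\{k_{a_k}\}$ is a Bessel sequence in $H^2$ (the estimate $\sum_k(1-|a_k|^2)^n|f(a_k)|^2\lesssim\|f\|_{H^2}^2$ is standard), so the synthesis operator $U\colon\ell^2\to H^2$, $Ue_k=k_{a_k}$, is bounded. As $S_p$ is an ideal, $U^*Q_\mu U$ is a positive operator in $S_p(\ell^2)$ with $\|U^*Q_\mu U\|_{S_p}\le\|U\|^2\|Q_\mu\|_{S_p}$, and the diagonal of a positive $S_p$-operator with $p\ge1$ lies in $\ell^p$ with $\ell^p$-norm at most its $S_p$-norm (Schur--Horn majorization and convexity of $t\mapsto t^p$). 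Hence $\{\langle Q_\mu k_{a_k},k_{a_k}\rangle\}_k\in\ell^p$, and since $\langle Q_\mu k_{a_k},k_{a_k}\rangle\gtrsim\gamma_k$ we obtain $(c)$, with $\|\{\gamma_k\}\|_{\ell^p}\lesssim\|Q_\mu\|_{S_p}$.

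The remaining case, necessity for $0<p<1$, is the main obstacle, and I expect to follow Luecking \cite{Lue2}. The diagonal argument above fails outright: for a positive operator in $S_p$ with $p<1$ the diagonal entries \emph{dominate} the $S_p$ quasi-norm (Schur--Horn again, now with the concave $t\mapsto t^p$), so a qualitatively different argument is needed. The approach mirrors the necessity part of the proof of Theorem~\ref{T3}: pass to the lattice, use the test functions $f_k(z)=\big((1-|a_k|^2)/(1-\langle z,a_k\rangle)\big)^{n+1}$ and their Rademacher combinations $F_t=\sum_k\lambda_k r_k(t)f_k$, apply Kahane's and Khinchine's inequalities together with the fractional area-function description of $H^2$ from Proposition~\ref{FracArea}, subharmonicity, and the pointwise comparison \eqref{estim}, to pass from $Q_\mu\in S_p$ to a summed estimate whose dual formulation --- via the duality of the sequence tent spaces $T^{p/2}_1$ and $T^{(p/2)'}_\infty$ of Theorem~\ref{TTD1} --- forces $\{\gamma_k\}\in\ell^p$; Proposition~\ref{S-P1} then delivers $(b)$. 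The genuinely delicate point is extracting that first quantitative estimate: since membership in $S_p$ is not an integral norm, one must find the right way to feed Rademacher sums of reproducing kernels into it, exactly as in Luecking's original argument.
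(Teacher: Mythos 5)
Your proposal has a genuine and recurring gap: at two crucial points you use that, for a separated sequence (in particular an $r$-lattice) $\{a_k\}$, the measure $\sum_k(1-|a_k|^2)^n\delta_{a_k}$ is a Carleson measure, equivalently that $\sum_k(1-|a_k|^2)^n|f(a_k)|^2\lesssim\|f\|_{H^2}^2$. This is false in the Hardy space setting: testing with $f\equiv 1$ gives $\sum_k(1-|a_k|^2)^n$, and for an $r$-lattice the points with $1-|a_k|\asymp 2^{-j}$ number about $2^{jn}$, so each dyadic level contributes a fixed amount and the sum diverges. (Separation gives this kind of estimate for Bergman spaces, where the exponent is $n+1+\alpha$; for $H^2$ one needs the sequence to satisfy the Carleson condition, which a full lattice never does.) This breaks your necessity argument for $p\ge 1$, since the synthesis operator $U\colon\ell^2\to H^2$, $Ue_k=k_{a_k}$, is unbounded, so $U^*Q_\mu U$ need not be in $S_p(\ell^2)$. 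It also breaks your sufficiency argument for $p>1$: the $\ell^\infty\to B(H^2)$ endpoint of your interpolation fails, because $\Psi(\mathbf{1})=\sum_k\delta_k^{-1}Q_{\mu_k}=Q_{\tilde\mu}$ with $\tilde\mu(E_k)=(1-|a_k|^2)^n$, i.e.\ $\tilde\mu$ is essentially $(1-|z|^2)^{-1}dv(z)$, which gives infinite mass to every Carleson box; so $\Psi(\mathbf{1})$ is not even a bounded operator. Your bound $\langle\Psi(c)f,f\rangle\le\|c\|_\infty\sum_k(1-|a_k|^2)^n\sup_{D_k}|f|^2$ is correct but the right-hand side is $+\infty$ for $f\equiv 1$. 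This is exactly why the paper never works with $\ell^2$ frames of reproducing kernels but with the tent spaces of sequences $T^p_q(Z)$ and the operators of Proposition~\ref{TKL}.

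The remaining pieces fare unevenly. Your sufficiency for $0<p\le 1$ (cell decomposition, M\"obius transport of each $Q_{\mu_k}$ to a fixed compact Bergman ball, super-polynomial decay of the localized singular numbers, then the $S_p$ quasi-triangle inequality) is a legitimate alternative to the paper's route through the Berezin-type transform (Lemma~\ref{KLS}(a) plus Proposition~\ref{S-P1}), provided you actually prove the uniform bound $\|Q_{\mu_k}\|_{S_s}\lesssim_s\delta_k$; that is doable but not free. For the necessity when $0<p<1$ you only offer a sketch and you point at the wrong tool: in this paper (and in Luecking's original argument) the Rademacher/Khinchine machinery is used for the $H^p\to H^q$ boundedness results, not for the Schatten membership; the actual proof (Proposition~\ref{FPSC}) splits the lattice into $M$ well-separated subsequences, conjugates $Q_\nu$ by the operator $Ae_j=h_j^t$, and compares the diagonal of $A^*Q_\nu A$ against its off-diagonal part via $\|S\|_{S_p}^p\ge\|D\|_{S_p}^p-\|E\|_{S_p}^p$, with the off-diagonal term made small by taking the separation constant $R$ large. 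For $p\ge 1$ the paper avoids your frame argument entirely by using Lemma~\ref{KLS}(b): $Q_\mu\in S_p$ implies $\langle Q_\mu k_z^{t/2},k_z^{t/2}\rangle\asymp S_t\mu(z)$ lies in $L^p(\lambda_n)$, and Proposition~\ref{S-P1} converts this to the lattice condition. As written, your proof establishes neither implication between (a) and (c) for $p>1$, and only one direction for $p\le 1$.
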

From Proposition \ref{S-P1} we already know that (b) and (c) are equivalent. In order to obtain the equivalence with condition (a) we need to introduce some concepts and notation.\\

Recall that $H^2$ is a reproducing kernel Hilbert space with the reproducing kernel function given by
\[K_ z(w)=\frac{1}{(1-\langle w,z\rangle )^{n}},\quad z,w\in \Bn\]
with norm $\|K_ z\|_{H^2}=\sqrt{K_ z(z)}=(1-|z|^2)^{-n/2}$.   The normalized kernel functions are denoted by $k_ z=K_ z/\|K_ z\|_{H^2}$.  We also need to introduce some ``derivatives" of the kernel functions. For $z,w\in \Bn$ and $t>0$, define
\[K_ z^{t}(w)=\overline{R^{-1,t}K_ w (z)}=\frac{1}{(1-\langle w,z\rangle )^{n+t}}\]
and let $k^{t}_ z$ denote its normalization, that is, $k^{t}_ z=K^{t}_ z/\|K^{t}_ z\|_{H^2}$.  In particular, since $f
(z)=\langle f,K_ z\rangle_{H^2}$ whenever $f\in H^2$, one has
\begin{equation}\label{eq1}
 R^{-1,t}f (z)=\langle f,K^{t}_ z\rangle_{H^2},\qquad f\in H^2(\Bn).
\end{equation}
\mbox{}
\\
For $\alpha>-1$, the Bergman space $A^2_{\alpha}$ consists of those holomorphic functions $f$ in $\Bn$ with $\|f\|_{A^2_{\alpha}}^2 =\int_{\Bn} |f(z)|^2 dv_{\alpha}(z)<\infty$. It is well known that, for $f\in H^2$ with $f(0)=0$ and $s\geq -1$, one has
\[
\|f\|_{H^2}\asymp \|Rf\|_{A^2_ 1}\asymp \|R^{s,1}f\|_{A^2_ 1}.
\]
\mbox{}
\\
We need first the following lemma that can be found in \cite{P1}.

\begin{lemma}\label{KLS}
Let $T:H^2(\Bn)\rightarrow H^2(\Bn)$ be a positive operator. For $t> 0$ set
\[\widetilde{T^t}(z)=\langle Tk^{t}_ z,k^{t}_ z\rangle_{H^2},\quad z\in \Bn.\]
\begin{enumerate}
\item[(a)]  Let $0<p\le 1$. If $\widetilde{T^t} \in L^p(\Bn,d\lambda_ n)$ then $T$ is in $S_ p(H^2)$.

\item[(b)] Let $p\ge 1$. If $T$ is in $S_ p(H^2)$ then $\widetilde{T^t} \in L^p(\Bn,d\lambda_ n)$.
\end{enumerate}
\end{lemma}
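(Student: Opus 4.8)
The plan is to prove the two parts by separate devices: part (b) by complex interpolation between $p=1$ and $p=\infty$, and part (a) by discretising along an $r$-lattice and running a Luecking-type argument. The common input is the ``reproducing frame'' structure of the kernels $k^t_z$. From $\|K^t_z\|_{H^2}^2=\int_{\Sn}|1-\langle\zeta,z\rangle|^{-2(n+t)}\,d\sigma(\zeta)\asymp(1-|z|^2)^{-n-2t}$ and \eqref{eq1} we have $\langle f,k^t_z\rangle=R^{-1,t}f(z)/\|K^t_z\|_{H^2}$, so by Proposition \ref{FracArea} (with $s=-1$) and \eqref{EqG},
\[
\int_{\Bn}\bigl|\langle f,k^t_z\rangle\bigr|^2\,d\lambda_n(z)\asymp\int_{\Bn}|R^{-1,t}f(z)|^2\,(1-|z|^2)^{2t-1}\,dv(z)\asymp\|f\|_{H^2}^2 .
\]
Hence the positive operator $W f=\int_{\Bn}\langle f,k^t_z\rangle\,k^t_z\,d\lambda_n(z)$ is bounded and invertible on $H^2$, and for a sufficiently fine $r$-lattice $\{a_k\}$ the discretised system $\{k^t_{a_k}\}$ is a frame for $H^2$; this sampling fact is standard (see \cite{P1}).

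For part (b): the map $T\mapsto\widetilde{T^t}$ is linear and $\|\widetilde{T^t}\|_{L^\infty}\le\|T\|$, since $|\langle Tk^t_z,k^t_z\rangle|\le\|T\|\,\|k^t_z\|_{H^2}^2=\|T\|$. If $T\in S_1$, Fubini and the definition of the trace give
\[
\int_{\Bn}\widetilde{T^t}(z)\,d\lambda_n(z)=\mathrm{tr}(TW)\le\|W\|\cdot\|T\|_{S_1},
\]
so $T\mapsto\widetilde{T^t}$ is bounded $S_1\to L^1(d\lambda_n)$. Since $[S_1,S_\infty]_\theta=S_p$ and $[L^1(d\lambda_n),L^\infty(d\lambda_n)]_\theta=L^p(d\lambda_n)$ for $1/p=1-\theta$, complex interpolation gives $\widetilde{T^t}\in L^p(d\lambda_n)$ with the asserted bound for all $T\in S_p$, $1\le p<\infty$. (Positivity of $T$ is not used here.)

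For part (a) I would argue in two steps. First, discretise the hypothesis: since $z\mapsto K^t_z$ is an $H^2$-valued antiholomorphic map and $T\ge0$, the function $z\mapsto\|T^{1/2}K^t_z\|_{H^2}^{2p}$ is subharmonic on $\Bn$; as $\|T^{1/2}K^t_z\|_{H^2}^2=\widetilde{T^t}(z)\,\|K^t_z\|_{H^2}^2\asymp\widetilde{T^t}(z)\,(1-|z|^2)^{-n-2t}$, the sub-mean value inequality over a Euclidean ball centred at $a_k$ of radius comparable to $1-|a_k|$ (which lies in a fixed dilate of $D(a_k,r)$), together with $(1-|z|^2)\asymp(1-|a_k|^2)$ there, leads to $\widetilde{T^t}(a_k)^p\lesssim\int_{D(a_k,2r)}\widetilde{T^t}(z)^p\,d\lambda_n(z)$; summing over $k$ and using the bounded overlap of the lattice yields $\sum_k\widetilde{T^t}(a_k)^p\lesssim\|\widetilde{T^t}\|_{L^p(d\lambda_n)}^p$. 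Second, pass from the diagonal to $S_p$: via the frame $\{k^t_{a_k}\}$ and its dual, membership of $T$ in $S_p$ is governed by a matrix on $\ell^2$ with diagonal comparable to $\{\widetilde{T^t}(a_k)\}$, and one estimates its $S_p$ quasi-norm by interpolating the Cauchy--Schwarz diagonal bound $|\langle Tk^t_{a_k},k^t_{a_j}\rangle|\le\widetilde{T^t}(a_k)^{1/2}\widetilde{T^t}(a_j)^{1/2}$ (valid since $T\ge0$) against the off-diagonal decay $|\langle k^t_{a_k},k^t_{a_j}\rangle|\lesssim\bigl((1-|a_k|^2)(1-|a_j|^2)/|1-\langle a_k,a_j\rangle|^2\bigr)^{n/2+t}$ (which, summed over a lattice, is $O(1)$ by Lemma \ref{l2} when $t$ is large), together with the quasi-triangle inequality $\|\sum_iA_i\|_{S_p}^p\le\sum_i\|A_i\|_{S_p}^p$ valid for $0<p\le1$. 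This is the argument of Luecking \cite{Lue2} (see also \cite{P1}), and it gives $\|T\|_{S_p}^p\lesssim\sum_k\widetilde{T^t}(a_k)^p$.

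The main obstacle is the second step of part (a). The soft ingredients --- subharmonicity, complex interpolation, boundedness of $W$ --- are routine; what requires real work is controlling the whole operator $T$, not just its diagonal Berezin values, which needs the frame/atomic-decomposition machinery for $H^2$ and a careful splitting of the associated $\ell^2$-matrix exploiting both the positivity of $T$ and the hyperbolic decay of the Gram matrix of the $k^t_{a_k}$. A secondary point, used throughout, is the verification of the lower frame bound for $\{k^t_{a_k}\}$, which is what forces the lattice to be taken fine (and forces $t$ large in the clean argument; the restriction on $t$ is then removed a posteriori).
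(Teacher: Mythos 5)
The paper does not actually prove this lemma --- it is quoted from \cite{P1} --- so your argument has to be judged against the standard proof of such Berezin-type Schatten criteria. Part (b) is essentially right: the endpoint bounds $S_\infty\to L^\infty$ and $S_1\to L^1(d\lambda_n)$ plus complex interpolation do the job. One small caveat: to interpolate you need the $S_1\to L^1$ bound for \emph{all} trace-class $T$, not just positive ones, and your trace identity only controls $\int\widetilde{T^t}\,d\lambda_n$ rather than $\int|\widetilde{T^t}|\,d\lambda_n$. This is easily repaired by expanding $T$ in its Schmidt decomposition and using your own estimate $\int_{\Bn}|\langle f,k^{t}_z\rangle|\,|\langle k^{t}_z,g\rangle|\,d\lambda_n(z)\lesssim\|f\|_{H^2}\|g\|_{H^2}$, so I would not count it as a gap. (For positive $T$ one can also bypass interpolation entirely via $\langle Tk^{t}_z,k^{t}_z\rangle^p\le\langle T^pk^{t}_z,k^{t}_z\rangle$ for $p\ge1$ and the trace identity applied to $T^p$.)

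Part (a) contains two genuine problems, one fixable and one structural. The fixable one: the sub-mean-value inequality over a \emph{Euclidean} ball of radius $\asymp 1-|a_k|$ compares $\widetilde{T^t}(a_k)^p$ with an average over a set of volume $\asymp(1-|a_k|)^{2n}$, whereas $d\lambda_n$ assigns mass $\asymp1$ to the Bergman ball $D(a_k,r)$, whose volume is $\asymp(1-|a_k|)^{n+1}$; what you get is $\widetilde{T^t}(a_k)^p\lesssim(1-|a_k|)^{1-n}\int_{D(a_k,2r)}\widetilde{T^t}(z)^p\,d\lambda_n(z)$, which is useless for $n\ge2$. You must use the M\"obius-invariant sub-mean-value property instead (the proof of \cite[Lemma 2.24]{ZhuBn} applies verbatim to $\|T^{1/2}K^{t}_z\|^{2p}$ since $z\mapsto T^{1/2}K^{t}_z$ is anti-holomorphic). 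The structural problem is your second step. For a \emph{general} positive operator the only information available about the off-diagonal entries $\langle Tk^{t}_{a_j},k^{t}_{a_k}\rangle$ is Cauchy--Schwarz; the hyperbolic decay you quote concerns the Gram matrix $\langle k^{t}_{a_j},k^{t}_{a_k}\rangle$, and there is no inequality of the form $|\langle Tx_j,x_k\rangle|\lesssim\|T\|\,|\langle x_j,x_k\rangle|$ for positive $T$ (consider a rank-one projection onto a vector correlated with many distant $k^{t}_{a_j}$). Cauchy--Schwarz alone gives $\sum_{j,k}|\langle Tk^{t}_{a_j},k^{t}_{a_k}\rangle|^p\le\bigl(\sum_j\widetilde{T^t}(a_j)^{p/2}\bigr)^2$, which requires the diagonal in $\ell^{p/2}$, not $\ell^p$, so the ``interpolation against off-diagonal decay'' cannot close. (It does close for Toeplitz operators, the setting of \cite{Lue2}, because there the off-diagonal entries genuinely decay; the lemma here is for arbitrary positive $T$.) The correct mechanism is different and simpler: with $Ef=(\langle f,k^{t}_{a_k}\rangle)_k$ the frame analysis operator, the lower frame bound gives $\|T\|_{S_p}=\|T^{1/2}\|_{S_{2p}}^2\lesssim\|ETE^*\|_{S_p}$, and one then invokes the majorization inequality valid for positive operators $A$ and $0<p\le1$, namely $\|A\|_{S_p}^p\le\sum_k\langle Ae_k,e_k\rangle^p$ for any orthonormal basis (the diagonal is majorized by the eigenvalue sequence and $x\mapsto x^p$ is concave); applied to $A=ETE^*$ on $\ell^2$ this yields $\|T\|_{S_p}^p\lesssim\sum_k\widetilde{T^t}(a_k)^p$ with no off-diagonal analysis at all. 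That majorization step is the missing idea.
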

\mbox{}
\\
\begin{lemma}\label{p-bound}
Let $0<p<\infty$, $\mu$ be a positive Borel measure on $\Bn$, and suppose that $S_ t\mu \in L^p(\Bn,d\lambda_ n)$ for $t>t_ p$. Then $Q_{\mu}$ is bounded on $H^2$.
\end{lemma}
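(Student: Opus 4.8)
The plan is to reduce the statement to the boundedness criterion already established in Theorem \ref{T2}. First I would apply Proposition \ref{S-P1}: the hypothesis that $S_t\mu\in L^p(\Bn,d\lambda_n)$ for some $t>t_p$ is equivalent to saying that, for any fixed $r$-lattice $\{a_k\}$, the sequence
\[
c_k:=\frac{\mu(D(a_k,r))}{(1-|a_k|^2)^n}
\]
belongs to $\ell^p$, with $\|\{c_k\}\|_{\ell^p}\asymp\|S_t\mu\|_{L^p(d\lambda_n)}$. The target is to show that $\mu$ is an ordinary Carleson measure, i.e. $\mu\in CM_1$; once this is known, Theorem \ref{T2} applied with $p=q=2$ (so that the exponent there equals $1+\tfrac12-\tfrac12=1$) gives at once that $Q_\mu\colon H^2\to H^2$ is bounded, with $\|Q_\mu\|_{H^2\to H^2}\lesssim\|\mu\|_{CM_1}$.

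So the heart of the proof is the implication $\{c_k\}\in\ell^p\ (p<\infty)\Rightarrow\mu\in CM_1$. Since $\ell^p\subseteq\ell^{\max(p,2)}$, I may assume $p\ge2$, in particular $p>1$; write $p'$ for the conjugate exponent. Fix $\zeta\in\Sn$ and $0<\delta<1$, and set $J=\{k:D(a_k,r)\cap B_\delta(\zeta)\neq\emptyset\}$. Since the balls $D(a_k,r)$ cover $\Bn$, we have $B_\delta(\zeta)\subseteq\bigcup_{k\in J}D(a_k,r)$, whence by Hölder's inequality
\[
\mu(B_\delta(\zeta))\le\sum_{k\in J}\mu(D(a_k,r))=\sum_{k\in J}c_k\,(1-|a_k|^2)^n\le\|\{c_k\}\|_{\ell^p}\Big(\sum_{k\in J}(1-|a_k|^2)^{np'}\Big)^{1/p'}.
\]
To bound the last factor I would use the standard estimate $|1-\langle a_k,\zeta\rangle|\asymp|1-\langle z,\zeta\rangle|$ for $z\in D(a_k,r)$ (see \cite[Lemma 2.20]{ZhuBn}), which shows that $k\in J$ forces $a_k\in B_{C\delta}(\zeta)$ and hence $1-|a_k|^2\lesssim\delta$; grouping the indices of $J$ according to the dyadic scale $1-|a_k|^2\asymp2^{-j}\delta$ and counting by means of the disjointness of the balls $D(a_k,r/4)$, one obtains at most $\lesssim 2^{jn}$ indices at scale $j$. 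Therefore, using $p'>1$,
\[
\sum_{k\in J}(1-|a_k|^2)^{np'}\lesssim\sum_{j\ge0}2^{jn}(2^{-j}\delta)^{np'}=\delta^{np'}\sum_{j\ge0}2^{jn(1-p')}\lesssim\delta^{np'},
\]
and consequently $\mu(B_\delta(\zeta))\lesssim\|\{c_k\}\|_{\ell^p}\,\delta^n$. This proves $\mu\in CM_1$ with $\|\mu\|_{CM_1}\lesssim\|S_t\mu\|_{L^p(d\lambda_n)}$, and the lemma follows.

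The step I expect to be the only real obstacle is precisely this passage from the $\ell^p$-lattice condition to the Carleson condition, and the subtlety is that the summability $p<\infty$ cannot be dispensed with: the exponent $n$ appearing in the definition of a (Hardy-space) Carleson measure is critical, so that $\sum_{k\in J}(1-|a_k|^2)^n$ diverges and the crude bound using only $\sup_k c_k<\infty$ fails; one must genuinely exploit the $\ell^p$ norm through Hölder's inequality, and the geometric series above converges exactly because of the strict inequality $p'>1$. The remaining ingredients --- the lattice covering and disjointness estimates, and the invocation of Theorem \ref{T2} --- are routine.
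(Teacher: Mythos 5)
Your proof is correct, and while it shares the paper's overall skeleton (Proposition \ref{S-P1} to pass to the lattice condition, then show $\mu\in CM_1$, then quote Theorem \ref{T2} with $p=q=2$), the core estimate is carried out by a genuinely different route. The paper verifies the Carleson condition in its integral form \eqref{sCM}: it bounds $\int_{\Bn}(1-|a|^2)^n|1-\langle a,z\rangle|^{-2n}d\mu(z)$ by a lattice sum and, for $p>1$, combines H\"older with the discrete Forelli--Rudin estimate of Lemma \ref{l2}, treating $0<p\le1$ separately via $|1-\langle a_k,a\rangle|^2\ge(1-|a_k|)(1-|a|)$. You instead check the geometric definition $\mu(B_\delta(\zeta))\lesssim\delta^n$ directly, replacing Lemma \ref{l2} by a dyadic count of lattice points in a Carleson box; your preliminary reduction to $p\ge2$ also eliminates the case split. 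The paper's version is shorter because Lemma \ref{l2} is already available; yours is more elementary and has the merit of making explicit why $p<\infty$ is essential --- the sum $\sum_{k\in J}(1-|a_k|^2)^n$ over a Carleson box diverges at the critical exponent, so a bound on $\sup_k c_k$ alone cannot suffice (and indeed $d\mu=(1-|z|^2)^{-1}dv$ gives a bounded sequence $\{c_k\}$ without $\mu$ being Carleson). The one place your write-up is thinner than it should be is the claim of at most $\lesssim 2^{jn}$ indices at dyadic scale $j$: this is true and standard, but deserves the one-line volume comparison (the disjoint balls $D(a_k,r/4)$ with $1-|a_k|^2\asymp 2^{-j}\delta$ each have volume $\asymp(2^{-j}\delta)^{n+1}$ and all sit inside a slab of the enlarged Carleson box of volume $\asymp 2^{-j}\delta\cdot\delta^n$).
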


\begin{proof}
By Theorem \ref{T2} it is enough to show that $\mu$ is a Carleson measure. Let $\{a_ k\}$ be an $r$-lattice on $\Bn$ and set $D_ k=D(a_ k,r)$. By \eqref{sCM} we need to prove that $\sup_{a\in \Bn} I_{\mu}(a)<\infty$, where
\[
I_{\mu}(a):=\int_{\Bn} \left (\frac{1-|a|^2}{|1-\langle a,z \rangle |^2}\right )^n \,d\mu(z).
\]
Since $|1-\langle a,z \rangle |\asymp |1-\langle a,a_ k\rangle |$ for $z\in D_ k$, we get
\[
I_{\mu}(a) \le (1-|a|^2)^n \sum_ k \int_{D_ k} \frac{d\mu(z)}{|1-\langle a,z \rangle |^{2n}}\lesssim (1-|a|^2)^n \sum _ k \frac{\mu(D_ k)}{|1-\langle a_ k, a\rangle |^{2n}}.
\]
If $0<p\le 1$, taking into account Proposition \ref{S-P1}, the result follows directly  from our assumption and the fact that $|1-\langle a_k,a\rangle|^2\geq (1-|a_k|)(1-|a|)$. If $p>1$, we use H\"{o}lder's inequality to get
\[
I_{\mu}(a) \lesssim (1-|a|^2)^n \left (\sum _ k \frac{(1-|a_ k|^2)^{np'}}{|1-\langle a_ k,a \rangle |^{2np'}}\right )^{1/p'}
\left (\sum_ k \Big (\frac{\mu(D_ k) }{(1-|a_ k|^2)^n}\Big )^p \right )^{1/p},
\]
and now the result is a consequence of the assumption, Proposition \ref{S-P1} and Lemma \ref{l2}.
\end{proof}

As a consequence of the previous lemmas, we get the following.
\begin{proposition}\label{PS1}
Let $0<p<\infty$, $t>t_ p$, and $\mu$ be a positive Borel measure on $\Bn$.
If $0 < p \leq 1$ and $S_{t}\mu$ is in $L^p(\Bn,d\lambda_{n}),$ then  $Q_\mu$ belongs to $S_p(H^2).$ Conversely, if $p\ge 1$ and $Q_\mu$ is in $S_p(H^2)$, then $S_{t}\mu \in L^p(\Bn,d\lambda_{n}).$
\end{proposition}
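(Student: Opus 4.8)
The plan is to exploit the fact that, on $H^2$, the operator $Q_\mu$ is a \emph{positive} operator, and to read off its Schatten membership from the derivative Berezin transform via Lemma \ref{KLS}. The first step: whenever $Q_\mu$ is bounded on $H^2$, Lemma \ref{L-Qmu1} with $p=2$ gives $\langle Q_\mu f,g\rangle_{H^2}=\int_{\Bn} f(w)\overline{g(w)}\,d\mu(w)$ for all $f,g\in H^2$, so $Q_\mu=Q_\mu^*$ and $\langle Q_\mu f,f\rangle_{H^2}=\int_{\Bn}|f|^2\,d\mu\ge 0$; hence $Q_\mu$ is positive. Boundedness on $H^2$ is at our disposal in both regimes: for $0<p\le 1$ it follows from the hypothesis $S_t\mu\in L^p(\Bn,d\lambda_n)$ together with Lemma \ref{p-bound}, and for $p\ge 1$ it is contained in the assumption $Q_\mu\in S_p(H^2)$.

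Next I would compute the transform $\widetilde{Q_\mu^t}(z)=\langle Q_\mu k^t_z,k^t_z\rangle_{H^2}$ appearing in Lemma \ref{KLS}. Since $k^t_z\in H^2$, the identity above with $f=g=k^t_z$ gives $\widetilde{Q_\mu^t}(z)=\int_{\Bn}|k^t_z(w)|^2\,d\mu(w)$. The normalizing factor is handled by the standard integral estimate over the sphere (see, e.g., \cite{Rud}),
\[
\|K^t_z\|_{H^2}^2=\int_{\Sn}\frac{d\sigma(\zeta)}{|1-\langle\zeta,z\rangle|^{2(n+t)}}\asymp (1-|z|^2)^{-(n+2t)},
\]
valid because $2(n+t)>n$. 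Hence $|k^t_z(w)|^2\asymp (1-|z|^2)^{n+2t}\,|1-\langle z,w\rangle|^{-(2n+2t)}$, and consequently
\[
\widetilde{Q_\mu^t}(z)\asymp (1-|z|^2)^{n+2t}\int_{\Bn}\frac{d\mu(w)}{|1-\langle z,w\rangle|^{2n+2t}}=S_{2t}\mu(z),\qquad z\in\Bn.
\]

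With this identification the statement is immediate from Lemma \ref{KLS} and Proposition \ref{S-P1}. Since $t>t_p\ge 0$ we also have $2t>t_p$, so by Proposition \ref{S-P1} the condition $S_t\mu\in L^p(\Bn,d\lambda_n)$ is equivalent to $S_{2t}\mu\in L^p(\Bn,d\lambda_n)$ — both being equivalent to the $\ell^p$ lattice condition (b) of that proposition, which does not involve the parameter — and therefore to $\widetilde{Q_\mu^t}\in L^p(\Bn,d\lambda_n)$ by the previous step. For $0<p\le 1$ the hypothesis then yields $\widetilde{Q_\mu^t}\in L^p(\Bn,d\lambda_n)$, and Lemma \ref{KLS}(a) gives $Q_\mu\in S_p(H^2)$. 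For $p\ge 1$, from $Q_\mu\in S_p(H^2)$ and Lemma \ref{KLS}(b) we obtain $\widetilde{Q_\mu^t}\in L^p(\Bn,d\lambda_n)$, i.e. $S_{2t}\mu\in L^p(\Bn,d\lambda_n)$, whence $S_t\mu\in L^p(\Bn,d\lambda_n)$ by Proposition \ref{S-P1}.

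The one point that needs care is the logical order of the argument: the pairing $\langle Q_\mu k^t_z,k^t_z\rangle_{H^2}$ and the use of Lemma \ref{L-Qmu1} are only legitimate once $Q_\mu$ is known to be bounded on $H^2$, which is precisely why Lemma \ref{p-bound} must be invoked in the range $0<p\le 1$. Everything else — the sphere estimate for $\|K^t_z\|_{H^2}$ and keeping track of the exponents $t$ versus $2t$ — is routine.
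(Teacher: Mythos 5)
Your proof is correct and follows essentially the same route as the paper: reduce to Lemma \ref{KLS} by identifying the Berezin-type transform of $Q_\mu$ along the derivative kernels with $S_\bullet\mu$, after securing boundedness via Lemma \ref{p-bound} in the range $0<p\le 1$. The only difference is cosmetic: the paper tests against $k_z^{t/2}$ so that the transform is directly comparable to $S_t\mu$, whereas you test against $k_z^t$ and obtain $S_{2t}\mu$, then pass back to $S_t\mu$ through the parameter-independence in Proposition \ref{S-P1} — both steps are valid.
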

\begin{proof}
By Lemma  \ref{p-bound}, the condition $S_ t\mu \in L^p(\Bn,d\lambda_ n)$ implies the boundedness of $Q_{\mu}$  on $H^2$. Hence, in both cases, we have
\[
\langle Q_{\mu} k_ z^{t/2},k_ z^{t/2}\rangle=\int_{\Bn} |k^{t/2}_ z(w)|^2\,d\mu(w).
\]
As
\[
\|K_ z^{t/2} \|^2_{H^2}\asymp (1-|z|^2)^{-n-t},
\]
we obtain
\[
\langle Q_{\mu} k_ z^{t/2},k_ z^{t/2}\rangle \asymp S_{t} \mu(z).
\]
Therefore, the result is a direct consequence of Lemma \ref{KLS}.
\end{proof}

The next result, together with Proposition \ref{PS1}, gives that condition (b) implies (a) in Theorem \ref{t-SC}.

\begin{proposition}\label{P-SC2}
Let $p>1$, $\mu$ be a positive Borel measure on $\Bn$, and suppose that $S_ t \mu \in L^p(\Bn,d\lambda_ n)$ for $t>t_ p$. Then $Q_{\mu}$ belongs to $S_ p(H^2)$.
\end{proposition}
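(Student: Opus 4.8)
The plan is to prove the sufficiency by complex (Stein) interpolation between the endpoint $p=1$, which is already supplied by Proposition \ref{PS1}, and the endpoint ``$p=\infty$'', i.e.\ boundedness on $H^2$, supplied by Theorem \ref{T2} (equivalently Lemma \ref{p-bound}). First I would pass from the condition $S_t\mu\in L^p(\Bn,d\lambda_n)$ to the equivalent discrete condition of Proposition \ref{S-P1}: fix an $r$-lattice $\{a_k\}$, set $\nu_k=\mu(D(a_k,r))/(1-|a_k|^2)^n$, so that $\{\nu_k\}\in\ell^p$, and fix a Borel partition $\{E_k\}$ of $\Bn$ with $D(a_k,r/4)\subset E_k\subset D(a_k,r)$; thus $\mu=\sum_k\mu_k$ with $\mu_k=\mu|_{E_k}$. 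After normalising we may assume $\|\{\nu_k\}\|_{\ell^p}=1$, so $0<\nu_k\le1$. Since $\langle Q_\mu f,g\rangle_{H^2}=\int_{\Bn}f\overline g\,d\mu$ by Lemma \ref{L-Qmu1}, the operator $Q_\mu$ is positive and, by Lemma \ref{p-bound}, bounded on $H^2$; hence by trace duality ($S_p=(S_{p'})^*$, $1<p<\infty$) it suffices to bound $|\operatorname{tr}(Q_\mu X)|\lesssim 1$ for every finite-rank operator $X$ with $\|X\|_{S_{p'}}\le1$.

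The core step is to build two analytic families. On the strip $\mathcal S=\{0\le \Re z\le 1\}$ put $\alpha(z)=(p-1)-pz$, so $\alpha(1/p')=0$, and define the complex measure $d\mu_z=\sum_k \nu_k^{\alpha(z)}\,d\mu_k$ and the operator $X_z=U|X|^{p'z}$, where $X=U|X|$ is the polar decomposition; then $\mu_{1/p'}=\mu$ and $X_{1/p'}=X$. On $\Re z=0$ the total variation $|\mu_{it}|=\sum_k \nu_k^{p-1}\mu_k$ is independent of $t$, and its associated sequence is $\{\nu_k^{p-1}\nu_k\}=\{\nu_k^{p}\}\in\ell^1$ of unit norm; splitting $\mu_{it}$ into four positive pieces, each dominated by $|\mu_{it}|$, and invoking Proposition \ref{PS1} with $p=1$ together with Proposition \ref{S-P1}, we get $\|Q_{\mu_{it}}\|_{S_1}\lesssim 1$, while $\|X_{it}\|_{S_\infty}=\|U|X|^{p'it}\|\le1$. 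On $\Re z=1$ the total variation $|\mu_{1+it}|=\sum_k \nu_k^{-1}\mu_k$ has constant associated sequence, hence is a Carleson measure with $\|\,\cdot\,\|_{CM_1}\lesssim1$, so by Theorem \ref{T2} (case $p=q=2$) $\|Q_{\mu_{1+it}}\|_{H^2\to H^2}\lesssim 1$, while $\|X_{1+it}\|_{S_1}\le\||X|^{p'}\|_{S_1}=\|X\|_{S_{p'}}^{p'}\le1$. Therefore $F(z)=\operatorname{tr}(Q_{\mu_z}X_z)$ obeys $|F(it)|\le \|Q_{\mu_{it}}\|_{S_1}\|X_{it}\|_{S_\infty}\lesssim1$ and $|F(1+it)|\le\|Q_{\mu_{1+it}}\|_{S_\infty}\|X_{1+it}\|_{S_1}\lesssim1$, and the three-lines theorem yields $|\operatorname{tr}(Q_\mu X)|=|F(1/p')|\lesssim1$, as required.

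The main obstacle — and where I would be most careful — is that $z\mapsto Q_{\mu_z}$ takes values in a set of measures that is not a linear space, so one must justify that $F$ is genuinely holomorphic and of admissible growth on $\mathcal S$. I would circumvent this by first proving the bound under the extra assumption that only finitely many $\nu_k$ are nonzero (e.g.\ when $\mu$ is compactly supported): then $\mu_z$ is a finite sum, both $z\mapsto Q_{\mu_z}$ and $z\mapsto X_z$ are entire, operator-valued and bounded on $\mathcal S$, so Stein interpolation (or directly Hadamard's three-lines lemma applied to $F$) applies with no subtlety. The general case follows by exhausting $\mu$ with $\mu^{(N)}=\mu|_{D(0,R_N)}$, $R_N\uparrow1$: one gets $\|Q_{\mu^{(N)}}\|_{S_p}\lesssim \|\{\nu_k^{(N)}\}\|_{\ell^p}\le\|\{\nu_k\}\|_{\ell^p}$, and since $Q_{\mu^{(N)}}\to Q_\mu$ in the weak operator topology while staying uniformly bounded in $S_p$, lower semicontinuity of the $S_p$-norm gives $Q_\mu\in S_p$ with the same bound. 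A secondary, purely bookkeeping point is that, because the Bergman balls $D(a_k,r)$ overlap, the sequence associated to $\sum_k c_k\mu_k$ is only comparable (not equal) to $\{c_k\nu_k\}$; this is absorbed into the implied constants via the finite-overlap property of the lattice.
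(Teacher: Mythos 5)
Your argument is correct, but it is a genuinely different route from the paper's. The paper first shows $Q_\mu$ is compact and then verifies Zhu's criterion $\sum_k|\langle Q_\mu e_k,e_k\rangle|^p<\infty$ over all orthonormal sets directly: it writes $\langle Q_\mu e_k,e_k\rangle=\int|e_k|^2\,d\mu$, bounds $|e_k|^2$ by an integral of $|e_k|\,|Re_k|$ against a Forelli--Rudin kernel, applies H\"older, and exploits $\sum_k|e_k(w)|^2\le\|K_w\|_{H^2}^2$ and $\sum_k|Re_k(w)|^2\le\|RK_w\|_{H^2}^2$, with separate computations for $1<p\le 2$ and $p>2$. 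You instead discretize $\mu$ over the lattice, use trace duality $S_p=(S_{p'})^*$, and run a Stein/three-lines interpolation of the analytic family $\mu_z=\sum_k\nu_k^{(p-1)-pz}\mu_k$ against $X_z=U|X|^{p'z}$ between the $S_1$ endpoint (Proposition \ref{PS1} with $p=1$, after splitting the complex measure into four positive pieces dominated by $|\mu_{it}|$) and the operator-norm endpoint (Theorem \ref{T2} with $p=q=2$, since $|\mu_{1+it}|$ has bounded associated lattice sequence and is therefore a Carleson measure with uniformly controlled constant). The endpoint computations check out, the choice $\alpha(1/p')=0$ recovers $\mu$ at the interior point, and your finite-support reduction followed by weak-operator-limit plus lower semicontinuity of the $S_p$-norm legitimately disposes of the holomorphy and growth issues. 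What your approach buys is conceptual economy: no case split at $p=2$ and no kernel estimates on orthonormal sets. What it costs is that you must use \emph{quantitative} versions of the endpoint results --- Proposition \ref{PS1} and Lemma \ref{KLS}(a) are stated in the paper only qualitatively, so you should note that the underlying results (from \cite{P1}) do come with the norm bounds $\|Q_\rho\|_{S_1}\lesssim\|S_t\rho\|_{L^1(\lambda_n)}$ needed on the line $\Re z=0$ --- together with the extra machinery of analytic families, trace duality, and the limiting argument. Your closing remark about the finite-overlap bookkeeping for the sequence associated to $\sum_kc_k\mu_k$ is exactly the right thing to flag and is harmless for the reason you give.
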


\begin{proof}
It is easy to modify the proof of Lemma \ref{p-bound} in order to see that the condition implies that $\mu$ is a vanishing Carleson measure. Hence, by Theorem \ref{mtC-1}, the operator $Q_{\mu}$ is compact. Thus, in view of  \cite[Theorem 1.27]{Zhu}, it is enough to show that
$$\sum_ k |\langle Q_{\mu} e_ k,e_ k \rangle |^p \le C<\infty$$
for all orthonormal sets $\{e_ k\}$ of $H^2$. In order to prove that, we
follow closely the argument used in \cite{PP}. By Lemma \ref{L-Qmu1} we have
\begin{displaymath}
|\langle Q_{\mu} e_ k,e_ k \rangle |^p =\left (\int_{\Bn} |e_ k(z)|^2 d\mu(z) \right )^p.
\end{displaymath}
By perturbing $Q_\mu$ by a rank-one operator, we may assume that $e_k(0)=0$ for all $k$. Since $e_ k^2 \in H^1 \subset A^2_{n-1}\subset A^1_{n-1}$, we have (see \cite[page 51]{ZhuBn})
\[
|e_ k(z)|^2 \lesssim \int_{\Bn} \frac{|R(e_ k^2)(w)|}{|1-\langle z,w \rangle |^{2n+t}}\,dv_{n+ t}(w)\lesssim \int_{\Bn} \frac{|e_ k(w)|\,|Re_ k(w)|}{|1-\langle z,w \rangle |^{2n+t}}\,dv_{n+ t}(w).
\]
This gives
\begin{equation}\label{Eq-Sc1}
|\langle Q_{\mu} e_ k,e_ k \rangle | \lesssim \int_{\Bn} |e_ k(w)|\,|Re_ k(w)| \,S_ t \mu(w)\,dv(w).
\end{equation}
If $1<p\le 2$, we use H\"{o}lder's inequality in \eqref{Eq-Sc1} and the fact that
\[
\|Re_ k\|_{A^2_ 1}\asymp \|e_ k\|_{H^2} =1
\]
to get
\[
\begin{split}
|\langle Q_{\mu} e_ k,e_ k \rangle |^p & \lesssim \ \int_{\Bn} |e_ k(w)|^p\,|Re_ k(w)|^{2-p} \,S_ t \mu(w)^p \,dv_{1-p}(w)
\end{split}
\]
Since
\[
\begin{split}
\sum_ k |e_ k(w)|^p\,|Re_ k(w)|^{2-p} & \le \Big (\sum_ k |e_ k(w)|^2 \Big )^{p/2}\Big (\sum_ k |Re_ k(w)|^2 \Big )^{(2-p)/2}
\\
& \le \|K_ w\|_ {H^2}^p\cdot \|RK_ w\|_{H^2}^{2-p}
\\
&\lesssim (1-|w|^2)^{-np/2} \,(1-|w|^2)^{-(n+2)(2-p)/2} =(1-|w|^2)^{p-(n+2)}
\end{split}
\]
we get
\[
\begin{split}
\sum_ k |\langle Q_{\mu} e_ k,e_ k \rangle | ^p &\lesssim \int_{\Bn} \left (\sum_ k |e_ k(w)|^p\,|Re_ k(w)|^{2-p} \right ) \,S_ t \mu(w)^p \,dv_{1-p}(w)
\\
&
\lesssim \int_{\Bn}  S_ t \mu(w)^p \,d\lambda_ n(w).
\end{split}
\]
This finishes the proof of the case $1<p\leq 2$.\\

If $p>2$, we use Cauchy-Schwarz inequality in \eqref{Eq-Sc1} to obtain
\[
\begin{split}
|\langle Q_{\mu} e_ k,e_ k \rangle |^2
&\lesssim \int_{\Bn} |e_ k(w)|^2\, S_ t\mu(w)^2\, (1-|w|^2)^n\,d\lambda_ n(w).
\end{split}
\]
By the reproducing formula for Bergman spaces, for any $\beta>0$, we have
\[
R^{\beta-1,1} e_ k(w)=\int_{\Bn} \frac{R^{\beta-1,1}e_ k(\zeta)\,dv_{\beta}(\zeta)}{(1-\langle w,\zeta\rangle)^{n+1+\beta}}.
\]
Hence,
\[
e_ k(w)=R_{\beta-1,1} R^{\beta-1,1} e_ k(w)=\int_{\Bn} \frac{R^{\beta-1,1}e_ k(\zeta)\,dv_{\beta}(\zeta)}{(1-\langle w,\zeta\rangle)^{n+\beta}}.
\]
Now, fix $\beta>n$ and take $\varepsilon>0$ with $\varepsilon <\min \big (\beta,\frac{n}{p-1}\big)$. By Cauchy-Schwarz and standard integral estimates we have
\[
\begin{split}
|e_ k(w)|^2 &\lesssim \left ( \int_{\Bn} \frac{|R^{\beta-1,1}e_ k(\zeta)|\,dv_{\beta}(\zeta)}{|1-\langle w,\zeta\rangle|^{n+\beta}} \right )^2
\\
&\lesssim \left ( \int_{\Bn} \frac{|R^{\beta-1,1}e_ k (\zeta)|^2\,dv_{1+\beta+\varepsilon}}{|1-\langle w,\zeta\rangle|^{n+\beta}}\right ) \,(1-|w|^2)^{-\varepsilon}.
\end{split}
\]
This gives
\[
|\langle Q_{\mu} e_ k,e_ k \rangle |^2 \lesssim \int_{\Bn} |R^{\beta-1,1}e_k(\zeta)| ^2\left ( \int_{\Bn} \frac{S_ t\mu(w)^2\, (1-|w|^2)^{n-\varepsilon}\,d\lambda_ n(w)}{|1-\langle w,\zeta\rangle|^{n+\beta}} \right ) \,dv_{1+\beta+\varepsilon}(\zeta).
\]
Because $\|R^{\beta-1,1}e_ k\|_{A^2_ 1}\asymp \|e_ k\|_{H^2}=1$, using H\"{o}lder's inequality with exponent $p/2>1$ we obtain
\[
|\langle Q_{\mu} e_ k,e_ k \rangle |^p \lesssim \int_{\Bn} |R^{\beta-1,1}e_ k(\zeta)|^2 \,C_{\mu}(\zeta)^{p/2} \,dv_{1+(\beta+\varepsilon)p/2} (\zeta),
\]
with
\[
C_{\mu}(\zeta):=\int_{\Bn} \frac{S_ t \mu(w)^2\,dv_{-1-\varepsilon}(w)}{|1-\langle w,\zeta \rangle|^{n+\beta}}.
\]
As
\[
\sum_ k |R^{\beta-1,1}e_ k(\zeta)|^2 \lesssim \big \| R^{\beta-1,1}K_{\zeta}\big \|_{H^2}\lesssim (1-|\zeta|^2)^{-(n+2)},
\]
we get
\[
\sum_ k |\langle Q_{\mu} e_ k,e_ k \rangle |^p \lesssim \int_{\Bn}  C_{\mu}(\zeta)^{p/2} \,(1-|\zeta|^2)^{(\beta+\varepsilon)p/2}\,d\lambda_ n (\zeta).
\]

Finally, by H\"{o}lder's inequality and the typical integral estimate,
\[
\begin{split}
C_{\mu}(\zeta)^{p/2}&\lesssim
\left (\int_{\Bn} \frac{S_ t \mu(w)^p}{|1-\langle w,\zeta \rangle |^{n+\beta}} \,dv_{-1-\varepsilon p+\varepsilon}(w) \right )
\left( \int_{\Bn} \frac{dv_{-1+\varepsilon}(w)}{|1-\langle w,\zeta \rangle |^{n+\beta}}
 \right )^{\frac{p}{2}-1}\\
 \\
&\lesssim \left (\int_{\Bn} \frac{S_ t \mu(w)^p}{|1-\langle w,\zeta \rangle |^{n+\beta}} \,dv_{-1-\varepsilon p+\varepsilon}(w) \right )
(1-|\zeta|^2)^{(\varepsilon-\beta) (\frac{p}{2}-1)}.
 \end{split}
\]
Inserting this into the previous estimate, using Fubini's theorem and Lemma \ref{IctBn} we conclude that
\[
\begin{split}
\sum_ k |\langle Q_{\mu} e_ k,e_ k \rangle |^p & \lesssim \int_{\Bn} S_ t \mu(w)^p \left (\int_{\Bn} \frac{(1-|\zeta|^2)^{\beta+\varepsilon (p-1)}}{|1-\langle w,\zeta \rangle|^{n+\beta}} \,d\lambda_ n(\zeta)\right )\, dv_{-1-\varepsilon p+\varepsilon}(w)
\\
& \lesssim \int_{\Bn} S_ t \mu(w)^p\,d\lambda_ n(w).
\end{split}
\]
This finishes the proof.
\end{proof}
\mbox{}
\\
\begin{proposition}\label{FPSC}
Let $0<p\le 1$, $\mu$ be a positive Borel measure on $\Bn$, and suppose that $Q_{\mu}\in S_ p(H^2)$. Then, for any $r$-lattice $\{a_ k\}$ on $\Bn$, we have
\[
\sum_ k \left ( \frac{\mu(D(a_ k,r))}{(1-|a_ k|^2)^n}\right )^p<\infty.
\]
\end{proposition}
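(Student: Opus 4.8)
\emph{Reduction to disjoint balls.} The first move is to pass to a situation where $\mu$ sits on a disjoint union of Bergman balls. Since $\{a_k\}$ is an $r$-lattice the balls $D(a_k,r)$ overlap at most $N$ times, so the graph on the index set obtained by joining $k\sim k'$ when $D(a_k,r)\cap D(a_{k'},r)\neq\emptyset$ has bounded degree; a greedy colouring splits the indices into finitely many classes $C_1,\dots,C_m$ with $\{D(a_k,r)\}_{k\in C_i}$ pairwise disjoint inside each class. As $\sum_k\nu_k^p=\sum_i\sum_{k\in C_i}\nu_k^p$ (writing $\nu_k=\mu(D(a_k,r))/(1-|a_k|^2)^n$), it suffices to estimate each inner sum. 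Fixing $i$ and putting $\mu_i=\mu|_{\bigcup_{k\in C_i}D(a_k,r)}$, we have $0\le Q_{\mu_i}\le Q_\mu$ with both operators positive, whence $\lambda_j(Q_{\mu_i})\le\lambda_j(Q_\mu)$ for all $j$ and $\|Q_{\mu_i}\|_{S_p}\le\|Q_\mu\|_{S_p}$. So we may assume $\mu$ is carried by a disjoint family $\{G_j\}=\{D(b_j,r)\}$ with $\{b_j\}$ separated, and the goal becomes the lower estimate $\sum_j\nu_j^p\lesssim\|Q_\mu\|_{S_p}^p$, $\nu_j=\mu(G_j)/(1-|b_j|^2)^n$, which it is enough to prove with the sum restricted to an arbitrary finite set $F$.

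\emph{The lower Schatten estimate by randomization.} The plan is to follow the scheme of Luecking \cite{Lue1,Lue2} and of the case $q<2$ in the proof of Theorem \ref{T3}. One works with the normalized fractional kernels $g_j=k^{t}_{b_j}$, which satisfy $\|g_j\|_{H^2}=1$ and $\int_{G_j}|g_j|^2\,d\mu\asymp\nu_j$, and — this is where Lemmas \ref{IctBn}, \ref{l2} and \ref{FRgeneral} enter — for $t$ large have Forelli--Rudin decay of $|\langle g_i,g_j\rangle_{H^2}|$ and of the mixed masses $\int_{G_l}|g_i||g_j|\,d\mu$ in the Bergman distance $\beta(b_i,b_j)$, so that $\{g_j\}_{j\in F}$ is almost orthonormal and $Q_\mu$ is almost block diagonal in this system. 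Introducing Rademacher functions $r_j$ and the random test vectors $F_{\boldsymbol\lambda}(\cdot,t)=\sum_{j\in F}\lambda_j r_j(t)\,g_j$ exactly as in Sections \ref{S3}--\ref{s4}, one uses the membership $Q_\mu\in S_p$, Khinchine's inequality to average out the signs, Kahane's inequality together with the off-diagonal decay to control the auxiliary operators $\sum_{j\in F}\varepsilon_j\lambda_j\,(g_j\otimes g_j)$, and the $p$-triangle inequality of the $S_p$-quasinorm to recombine the pieces; a preliminary perturbation of $Q_\mu$ by a finite rank operator (as in the proof of Proposition \ref{P-SC2}) allows one to normalize at the origin when invoking the area-function characterizations. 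Letting $F$ exhaust the lattice then yields $\sum_j\nu_j^p\lesssim\|Q_\mu\|_{S_p}^p$, and undoing the reduction of the first step completes the proof.

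\emph{Expected main obstacle.} For $p>1$ one could just invoke \cite[Theorem 1.27]{Zhu} and test $Q_\mu$ on orthonormal sets, but for $p\le 1$ the quasi-norm $\|\cdot\|_{S_p}$ is not convex and admits no useful duality (in particular the diagonal of an $S_p$ operator need not lie in $\ell^p$), so one is forced to argue directly with the quasi-norm. The genuinely delicate point — the reason the Rademacher/Khinchine/Kahane machinery together with the Forelli--Rudin estimates is indispensable — is to control, uniformly in the finite set $F$, the non-orthogonality of the kernel system $\{g_j\}$ and the cross terms $\int_{G_l}|g_i|^2\,d\mu$ coming from the balls $G_l$ with $l\neq i$, since the sequence $\{b_j\}$ is only separated and the naive summation of these interactions diverges. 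This is the technical heart of the argument, carried out in the one-variable Bergman setting by Luecking \cite{Lue2}.
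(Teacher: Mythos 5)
Your overall strategy (restrict $\mu$ to separated subfamilies, test $Q_\mu$ against normalized kernels attached to the lattice points, split into a diagonal main term and an off-diagonal error, recombine with the $p$-triangle inequality) is the right one, but there are two genuine gaps. First, your reduction only achieves \emph{disjointness} of the balls $D(a_k,r)$ within each colour class, i.e.\ separation by a fixed amount comparable to $r$. That is not enough: with only bounded separation the off-diagonal interactions between the kernels $k^t_{b_j}$ are comparable in size to the diagonal term, and there is no small constant to absorb. The paper instead partitions the lattice into $M=M(R)$ subsequences in which any two points satisfy $\beta(b_j,b_\ell)\ge R$ with $R$ a \emph{free parameter chosen at the very end}. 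The off-diagonal error is then estimated (following Lemma~12 of \cite{ZhuNYJM}) by an integral over $G_R=\{(z,w):\beta(z,w)\ge R-2r\}$, and the whole point is that $v^2(G_R)\to 0$ as $R\to\infty$, which produces the absorbable bound $\|E\|_{S_p}^p\le C'v^2(G_R)^{1/x'}\sum_j\nu_j^p$. Without the tunable separation parameter your scheme cannot close.

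Second, the mechanism you invoke for the core estimate is not the one that works here. Khinchine/Kahane randomization over Rademacher signs is what the paper uses for the \emph{boundedness} necessity arguments in Sections \ref{S3}--\ref{s4}, where one tests against random combinations $\sum_k\lambda_k r_k(t)f_k$ and extracts tent-space information by duality; for the Schatten lower bound with $p\le 1$ the paper does something different and more concrete: it fixes an orthonormal basis $\{e_j\}$, defines the bounded operator $A$ with $Ae_j=h^t_j$ (the normalized kernels at the widely separated points $b_j$), and works with $S=A^*Q_\nu A\in S_p$. The diagonal part $D$ of $S$ in the basis $\{e_j\}$ has $\|D\|_{S_p}^p=\sum_j|\langle Q_\nu h^t_j,h^t_j\rangle|^p\gtrsim\sum_j\nu_j^p$ exactly, and the inequality $\|S\|_{S_p}^p\ge\|D\|_{S_p}^p-\|E\|_{S_p}^p$ does the recombination. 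Your proposal defers precisely this step --- the control of the cross terms --- to ``the Rademacher/Khinchine/Kahane machinery'' and a citation of Luecking, while acknowledging it as the technical heart; as written, the heart of the proof is missing. To repair it, replace the disjointness colouring by the $R$-separation partition, introduce the conjugation by $A$, and carry out the off-diagonal estimate with the $G_R$-volume argument.
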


\begin{proof}
 Fix $R>0$ big enough to be chosen later, and partition the lattice $\{a_ k\}$ into $M$ subsequences such that any two distinct points $b_ j$ and $b_ {\ell}$ in the same subsequence satisfy $\beta(b_ j,b_ {\ell})\ge R$. Let $\{b_ j \}$ be such a subsequence and consider the measure

\[
\nu=\sum_ j \mu \chi_ j,
\]
where $\chi_ j$ denotes the characteristic function of the Bergman metric ball $D_ j:=D(b_ j,r)$. Fix an orthonormal basis $\{e_ j\}$ of $H^2$, and consider the linear operator $A:H^2\rightarrow H^2$ defined by $Ae_ j=h_ j^t$ for a sufficiently large $t$, where
\[
h_ j^t(z)=\frac{(1-|b_ j|^2)^{(n+2t)/2}}{(1-\langle z,b_ j\rangle )^{n+t}}.
\]
For a large $t$, the operator $A$ is bounded on $H^2$. Then the operator $S=A^*Q_{\nu}A$ is in  $S_ p(H^2)$ with $\|S\|_{S_ p}\lesssim \|Q_{\nu}\|_{S_ p}$. Split the operator as $S=D+E$, where $D$ is a diagonal operator. We have
\[
\|S\|^p_{S_ p}\ge \|D\|^p_{S_ p}-\|E\|^p_{S_ p}.
\]
We can estimate the diagonal term as follows.
\[
\begin{split}
\|D\|^p_{S_ p}&=\sum_ j \big | \langle De_ j,e_ j\rangle \big |^p=\sum_ j \big | \langle Q_{\nu}h^t_ j,h^t_ j\rangle \big |^p\\
\\
& =\sum_ j \left ( \int_{\Bn} |h_ j^t (z)|^2\,d\nu(z) \right )^p \ge \sum_ j \left ( \int_{D_ j} |h_ j^t (z)|^2\,d\nu(z) \right )^p\\
\\
& \ge C \sum_ j \left ( \frac{\mu(D_ j)}{(1-|b_ j|^2)^n} \right )^p.
\end{split}
\]
For the non-diagonal term, we can proceed almost exactly as in the proof of Lemma 12 in \cite{ZhuNYJM}. We only sketch some details. First, one has the estimate

$$\|E\|_p^p \le C\sum_{i=1}^\infty \mu(D_i)I_i,$$
where
$$I_i=\sum_{j\neq m} |h^t_j(b_i)h^t_m(b_i)|^p=\sum_{j\neq m} \frac{[(1-|b_j|^2)(1-|b_m|^2)]^{pt+pn/2}}{[|1-\langle b_i,b_j\rangle||1-\langle b_i,b_m\rangle|]^{pn+pt}}.$$
Denote
$$G_R=\{(z,w) \in \Bn\times \Bn : \beta(z,w)\geq R-2r\}.$$
Then,
$$I_i\leq C (1-|b_i|^2)^{-pn}\int\int_{G_R} \frac{[(1-|z|^2)(1-|w|^2)]^{pt+pn/2-n-1}dv(z)dv(w)}{[|1-\langle z,b_i\rangle||1-\langle w,b_i\rangle|]^{pt}}.$$
Since $(1-|z|^2)(1-|w|^2)\leq 4|1-\langle z,b_i\rangle||1-\langle w,b_i\rangle|$, we obtain
$$I_i\leq C (1-|b_i|^2)^{-pn}\int\int_{G_R} \frac{dv(z)dv(w)}{[|1-\langle z,b_i\rangle||1-\langle w,b_i\rangle|]^{n+1-pn/2}}.$$
Now, choose $x\in (1,\infty)$ so that
$$A=x(n+1-pn/2)<n+1.$$
It follows that
$$I_i\leq C(1-|b_i|^2)^{-pn} v^2(G_R)^{1/x'},$$
where
$$v^2(G_R)=\int\int_{G_R}dv(z)dv(w)$$ tends to zero as $R\to \infty$.
Therefore, there exists a constant $C'$ so that
$$\|S\|_p^p \geq (C-C'v^2(G_R)^{1/x'})\sum_{i=1}^\infty \left(\frac{\mu(D_i)}{(1-|b_i|^2)^n}\right)^p.$$
Recall that the original lattice was partitioned into $M$ subsequences that all satisfy this bound. Therefore the claim follows.
\end{proof}
\subsection{Proof of Theorem \ref{t-SC}}
The equivalence between (b) and (c) is Proposition \ref{S-P1}. The implication (b) implies (a) is proved in Proposition \ref{PS1} (case $0<p\le1$) and Proposition \ref{P-SC2} (case $p\ge 1$). Finally, Proposition \ref{PS1} gives the implication (a) implies (b) for $p\ge 1$, and the implication (a) implies (c) for $0<p\le 1$ is Proposition \ref{FPSC}.

\section{Applications to weighted composition operators, Volterra type integration operators and Carleson embeddings}
In this section we provide applications of our results on the Toeplitz type operator $Q_{\mu}$ in order to  study the membership in the Schatten ideal $S_ p$ of weighted composition and Volterra type integral operators acting on $H^2$. We also obtain a description of when the Carleson embedding $R_{\mu}(f)=f$ is in $S_ p(H^2,L^2(\mu))$ for positive Borel measures $\mu$ in $\Bn$.
\\

For a holomorphic function $\varphi:\Bn \rightarrow \Bn$, the composition operator $C_{\varphi}$ is given by $C_{\varphi} f=f\circ \varphi$ for $f\in H(\Bn)$. Observe that, as $\varphi$ is a bounded holomorphic function on $\Bn$, it has radial limits $\varphi^*$ almost everywhere. Given a function $u\in H(\Bn)$, the weighted composition operator $W_{u,\varphi}$ is given by $W_{u,\varphi} f=uC_{\varphi} f$ for $f\in H(\Bn)$. We refer to \cite{CD1,CD2,Sty,ZC} for studies of weighted composition operators acting on Hardy spaces. If $W_{u,\varphi}$ is acting on $H^2$, then clearly $u\in H^2$, and therefore it has radial limits $u^*$ almost everywhere. Also it is well known and easy to see that $(W_{u,\varphi})^* W_{u,\varphi}=Q_{\mu_{u,\varphi}}$, where $\mu_{u,\varphi}$ is the measure defined on Borel sets $E\subset \Bn$ by
\[
\mu_{u,\varphi}(E)=\int_{ \varphi^{-1}(E)\cap \,\Sn} |u^*(\zeta)|^2\,d\sigma ( \zeta ).
\]
 Since $W_{u,\varphi}$ belongs to $S_ p(H^2)$ if and only if $Q_{\mu_{u,\varphi}}\in S_{p/2}(H^2)$, and
\[
\int_{\Bn}  \frac{d\mu_{u,\varphi}(z)}{|1-\langle z,w \rangle |^{2n+t}}=\int_{\Sn} \frac{|u^*(\zeta)|^2\,d\sigma(\zeta)}{|1-\langle \varphi^*(\zeta),w \rangle |^{2n+t}},
\]
in view of Theorem \ref{t-SC}, we obtain the following description of when the weighted composition operator $W_{u,\varphi}$ belongs to the Schatten ideal $S_ p(H^2)$.
\begin{theorem}\label{FinalT}
Let $0<p<\infty$, $u\in H(\Bn)$ and $\varphi:\Bn \rightarrow \Bn$ be holomorphic. Then $W_{u,\varphi}\in S_ p(H^2)$ if and only if, for $t>t_ {p/2}$ one has
\begin{equation}\label{FEq2}
\int_{\Bn}\left ((1-|w|^2)^{n+t}\int_{\Sn} \frac{|u^*(\zeta)|^2\,d\sigma(\zeta)}{|1-\langle \varphi^*(\zeta),w \rangle |^{2n+t}}\right )^{p/2}\,d\lambda_ n(w)<\infty.
\end{equation}
\end{theorem}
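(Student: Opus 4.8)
The plan is to reduce the statement entirely to the Schatten class description of $Q_\mu$ already obtained in Theorem~\ref{t-SC}. The first thing I would do is establish carefully the operator identity $(W_{u,\varphi})^*W_{u,\varphi}=Q_{\mu_{u,\varphi}}$ quoted above. Since $u\in H^2$ forces $|u^*|^2\in L^1(\Sn)$, and $\varphi$, being a bounded holomorphic self-map of $\Bn$, has radial limits $\varphi^*$ almost everywhere, the measure $\mu_{u,\varphi}$ is a genuine finite positive Borel measure on $\Bn$, namely the push-forward of $|u^*|^2\,d\sigma$ under $\varphi^*$. For holomorphic polynomials $f,g$ the change-of-variables formula for push-forwards then gives
\[
\langle W_{u,\varphi}f,W_{u,\varphi}g\rangle_{H^2}=\int_{\Sn}|u^*(\zeta)|^2\,f(\varphi^*(\zeta))\,\overline{g(\varphi^*(\zeta))}\,d\sigma(\zeta)=\int_{\Bn}f(z)\,\overline{g(z)}\,d\mu_{u,\varphi}(z),
\]
and by Lemma~\ref{L-Qmu1} the right-hand side equals $\langle Q_{\mu_{u,\varphi}}f,g\rangle_{H^2}$; density of the polynomials in $H^2$ then upgrades this to the full operator identity.

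Next I would invoke the elementary Schatten-class fact that a bounded operator $T$ on a Hilbert space lies in $S_p$ if and only if $T^*T$ lies in $S_{p/2}$: the singular numbers of $T$ are the eigenvalues of $|T|=(T^*T)^{1/2}$, so $s_k(T)^2=\lambda_k(T^*T)=s_k(T^*T)$, whence $\{s_k(T)\}\in\ell^p$ is equivalent to $\{s_k(T^*T)\}\in\ell^{p/2}$. (Boundedness transfers between the two sides via $\|W_{u,\varphi}f\|_{H^2}^2=\langle Q_{\mu_{u,\varphi}}f,f\rangle_{H^2}$, and Schatten membership forces compactness on either side.) Thus $W_{u,\varphi}\in S_p(H^2)$ if and only if $Q_{\mu_{u,\varphi}}\in S_{p/2}(H^2)$.

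It then remains to apply Theorem~\ref{t-SC} to the operator $Q_{\mu_{u,\varphi}}$ with exponent $p/2$. Condition (b) there says that $Q_{\mu_{u,\varphi}}\in S_{p/2}(H^2)$ if and only if $S_t\mu_{u,\varphi}\in L^{p/2}(\Bn,d\lambda_n)$ for all (equivalently, some) $t>t_{p/2}=\max(2n/p-n,0)$. Unravelling the definition of $S_t$ and using the push-forward identity
\[
\int_{\Bn}\frac{d\mu_{u,\varphi}(z)}{|1-\langle z,w\rangle|^{2n+t}}=\int_{\Sn}\frac{|u^*(\zeta)|^2\,d\sigma(\zeta)}{|1-\langle\varphi^*(\zeta),w\rangle|^{2n+t}},
\]
one sees that $S_t\mu_{u,\varphi}(w)$ is precisely the quantity inside the outer parentheses of \eqref{FEq2}, so the requirement $S_t\mu_{u,\varphi}\in L^{p/2}(\Bn,d\lambda_n)$ is exactly condition \eqref{FEq2}. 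Chaining the three equivalences gives the theorem.

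As for the main difficulty: there is no deep obstacle here, the proof being essentially an assembly of results proved earlier. The only points that require genuine (if routine) attention are the rigorous justification of the factorization $(W_{u,\varphi})^*W_{u,\varphi}=Q_{\mu_{u,\varphi}}$ — in particular verifying the integrability needed to legitimately apply the push-forward change of variables and Lemma~\ref{L-Qmu1} — and checking that the hypotheses of Theorem~\ref{t-SC} genuinely apply to $\mu_{u,\varphi}$, which comes down to the observation that $|u^*|^2\,d\sigma$ is a finite measure on $\Sn$ because $u\in H^2$, together with the fact (used in the reduction) that \eqref{FEq2} itself forces $Q_{\mu_{u,\varphi}}$, hence $W_{u,\varphi}$, to be bounded.
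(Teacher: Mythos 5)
Your proposal is correct and follows essentially the same route as the paper: the paper likewise reduces the statement to Theorem \ref{t-SC} via the identity $(W_{u,\varphi})^*W_{u,\varphi}=Q_{\mu_{u,\varphi}}$ (which it records as well known), the standard equivalence $T\in S_p\iff T^*T\in S_{p/2}$, and the push-forward identity showing that $S_t\mu_{u,\varphi}(w)$ is exactly the inner expression in \eqref{FEq2}. Your write-up merely supplies more detail on the factorization and the integrability checks than the paper does.
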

As a consequence we can recover the result of Luecking and Zhu \cite{LZ} on the membership in the Schatten classes of composition operators acting on the Hardy space of the unit disk $\mathbb{D}$. We use the Nevanlinna counting function $N^*_{\varphi}$ defined as
\[
N^*_{\varphi}(z)=\sum _{w: \varphi(w)=z} (1-|w|^2),\qquad z\in \mathbb{D}\setminus \{\varphi(0)\},
\]
where the last sum is interpreted as being zero if $z\notin \varphi(\mathbb{D})$.
\begin{coro}
Let $\varphi:\mathbb{D}\rightarrow \mathbb{D}$ analytic and $0<p<\infty$. Then $C_{\varphi}\in S_ p(H^2)$ if and only if
\[
\frac{N^*_{\varphi}(z)}{(1-|z|)} \in L^{p/2}(\mathbb{D},d\lambda_ 1).
\]
\end{coro}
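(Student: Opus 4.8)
The plan is to specialize Theorem~\ref{FinalT} to $n=1$ and $u\equiv1$ and then translate the resulting integral condition into a statement about the Nevanlinna counting function by means of the classical Littlewood--Paley identity. Since $C_\varphi=W_{1,\varphi}$, if we write $\mu=\mu_{1,\varphi}$ for the pull-back of $d\sigma$ under the boundary map $\varphi^*\colon\partial\mathbb{D}\to\overline{\mathbb{D}}$, then Theorem~\ref{FinalT} (with $n=1$) tells us that $C_\varphi\in S_p(H^2)$ if and only if, for one (hence every) $t>t_{p/2}$,
\[
S_t\mu(w)=(1-|w|^2)^{1+t}\int_{\partial\mathbb{D}}\frac{d\sigma(\zeta)}{|1-\overline{\varphi^*(\zeta)}\,w|^{2+t}}\in L^{p/2}(\mathbb{D},d\lambda_1).
\]

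Next I would fix $w\in\mathbb{D}$ and apply Stanton's change-of-variables formula
\[
\int_{\partial\mathbb{D}}(h\circ\varphi^*)\,d\sigma=h(\varphi(0))+c_0\int_{\mathbb{D}}\Delta h(z)\,N_\varphi(z)\,dv(z),\qquad h\in C^2(\mathbb{D}),
\]
where $c_0>0$ is absolute and $N_\varphi(z)=\sum_{\varphi(a)=z}\log(1/|a|)$ is the (logarithmic) Nevanlinna counting function, to the function $h_w(z)=|1-\overline w z|^{-(2+t)}$. As $\log|1-\overline w z|$ is harmonic in $z$, $h_w=e^{-(2+t)\log|1-\overline w z|}$ is a smooth subharmonic function on $\mathbb{D}$, and a short computation gives $\Delta h_w(z)=(2+t)^2|w|^2|1-\overline w z|^{-(4+t)}$. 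Thus
\[
S_t\mu(w)=\frac{(1-|w|^2)^{1+t}}{|1-\overline{\varphi(0)}\,w|^{2+t}}+c_0(2+t)^2|w|^2(1-|w|^2)^{1+t}\int_{\mathbb{D}}\frac{N_\varphi(z)\,dv(z)}{|1-\overline w z|^{4+t}}.
\]
Because $\varphi(0)$ is a fixed point of $\mathbb{D}$, the first summand is $\asymp(1-|w|^2)^{1+t}$, and it belongs to $L^{p/2}(\mathbb{D},d\lambda_1)$ precisely because the constraint $t>t_{p/2}$ forces $(1+t)p/2>1$; the factor $|w|^2$ is harmless as well. Hence $C_\varphi\in S_p(H^2)$ if and only if
\[
G_t(w):=(1-|w|^2)^{1+t}\int_{\mathbb{D}}\frac{N_\varphi(z)\,dv(z)}{|1-\overline w z|^{4+t}}\in L^{p/2}(\mathbb{D},d\lambda_1).
\]

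The heart of the argument, and the step I expect to be the main obstacle, is to show that $G_t\in L^{p/2}(d\lambda_1)$ if and only if $N_\varphi/(1-|\cdot|)\in L^{p/2}(d\lambda_1)$, which rests on the subharmonicity-type regularity of the counting function. Fix an $r$-lattice $\{a_k\}$ and set $\rho_k=\int_{D(a_k,r)}N_\varphi\,dv$. Using $|1-\overline w z|\asymp|1-\overline w a_k|$ for $z\in D(a_k,r)$ one gets $G_t(w)\asymp(1-|w|^2)^{1+t}\sum_k\rho_k|1-\overline w a_k|^{-(4+t)}$, and then a Forelli--Rudin estimate (Lemma~\ref{IctBn}) combined with the usual lattice comparisons for weighted Bergman spaces (applied to the non-negative coefficients $\rho_k$, so that no cancellation occurs, as in the proof of Proposition~\ref{S-P1}) yields $\int_{\mathbb{D}}G_t^{p/2}\,d\lambda_1\asymp\sum_k\big(\rho_k\,(1-|a_k|^2)^{-3}\big)^{p/2}$. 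On the other hand, by the standard sub-mean-value and Harnack-type properties of $N_\varphi$ --- namely that $N_\varphi$ is, up to absolute constants, comparable on any Bergman disk $D(a,r)$ with $\varphi(0)\notin D(a,2r)$ --- one has $\rho_k\asymp(1-|a_k|^2)^2N_\varphi(a_k)$ for all but the boundedly many lattice points near $\varphi(0)$; since $\lambda_1(D(a_k,r))$ is independent of $k$, this gives $\sum_k\big(\rho_k(1-|a_k|^2)^{-3}\big)^{p/2}\asymp\int_{\mathbb{D}}\big(N_\varphi(w)/(1-|w|)\big)^{p/2}\,d\lambda_1(w)$. The excluded lattice points contribute only a finite amount to both sides because, by Littlewood's inequality $N_\varphi(z)\le\log\big|(1-\overline{\varphi(0)}z)/(z-\varphi(0))\big|$, the counting function has near $\varphi(0)$ at worst a logarithmic singularity and hence lies in $L^{p/2}_{\mathrm{loc}}$.

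It remains to replace $N_\varphi$ by $N^*_\varphi(z)=\sum_{\varphi(a)=z}(1-|a|^2)$. From $1-|a|^2\le2\log(1/|a|)$ we always have $N^*_\varphi\le2N_\varphi$; conversely, given $\delta\in(0,1)$ the Schwarz--Pick lemma yields $M(\delta):=\sup_{|a|\le\delta}|\varphi(a)|<1$, so every $a$ with $\varphi(a)=w$ and $|w|>M(\delta)$ satisfies $|a|>\delta$, whence $N_\varphi(w)\le C_\delta N^*_\varphi(w)$ on $\{|w|>M(\delta)\}$. On the compact set $\{|w|\le M(\delta)\}$ both $N_\varphi^{p/2}$ and $(N^*_\varphi)^{p/2}$ are locally integrable and $d\lambda_1$ is comparable to $dv$, so $N_\varphi/(1-|\cdot|)\in L^{p/2}(d\lambda_1)$ if and only if $N^*_\varphi/(1-|\cdot|)\in L^{p/2}(d\lambda_1)$. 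Together with the previous steps this completes the proof.
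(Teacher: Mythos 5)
Your overall route coincides with the paper's: specialize Theorem~\ref{FinalT} to $u\equiv 1$, convert the sphere integral into an area integral against a Nevanlinna counting function via a change of variables, and then pass to a lattice discretization in the spirit of Proposition~\ref{S-P1}. Working with the logarithmic counting function $N_\varphi$ via Stanton's formula and converting to $N^*_\varphi$ only at the end (your last paragraph, which is correct) is a harmless cosmetic difference from the paper, which uses the Littlewood--Paley inequality with weight $1-|z|^2$ and the change-of-variables formula to land on $N^*_\varphi$ directly.

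However, the step you yourself flag as the heart of the argument contains a genuine error. You justify $\rho_k\asymp(1-|a_k|^2)^2N_\varphi(a_k)$ by asserting that $N_\varphi$ is ``up to absolute constants, comparable on any Bergman disk $D(a,r)$ with $\varphi(0)\notin D(a,2r)$.'' No such Harnack-type property holds: the counting function satisfies only the \emph{one-sided} sub-mean-value inequality $N_\varphi(a)\lesssim (1-|a|^2)^{-2}\int_{D(a,r)}N_\varphi\,dA$ (Lemma~3.18 of \cite{CMc}), and it can vanish at the center of a Bergman disk on which its average is strictly positive --- take for instance $\varphi$ a Riemann map onto a slit disk, so that $N_\varphi\equiv 0$ on the slit. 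Consequently the inequality $\rho_k\lesssim(1-|a_k|^2)^2N_\varphi(a_k)$, which you need for the implication $N_\varphi/(1-|\cdot|)\in L^{p/2}(d\lambda_1)\Rightarrow \sum_k\bigl(\rho_k(1-|a_k|^2)^{-3}\bigr)^{p/2}<\infty$, is false as stated. For $p\ge 2$ this direction is rescued by Jensen's inequality, $\bigl(\fint_{D_k}N_\varphi\,dA\bigr)^{p/2}\le \fint_{D_k}N_\varphi^{p/2}\,dA$, with no regularity of $N_\varphi$ needed; but for $0<p<2$ you need a reverse-Jensen inequality, and the correct tool is the self-improvement of the sub-mean-value property under powers less than one, namely $N_\varphi(z)^{p/2}\lesssim(1-|z|^2)^{-2}\int_{D(z,r)}N_\varphi^{p/2}\,dA$, which is \cite[Proposition 11.4]{Zhu} and is precisely what the paper invokes at this point. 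With that substitution your argument closes; without it, the sufficiency direction for $0<p<2$ does not go through.
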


\begin{proof}
For $w\in \mathbb{D}$ and $t>t_ {p/2}$, consider the function
$
f_ w(z)=(1-z\overline{w} )^{-1-t/2}
$, and observe that
\[
\int_{\mathbb{S}_1} \frac{d\sigma(\zeta)}{|1- \varphi^*(\zeta)\overline{w} |^{2+t}}=\|C_{\varphi}f_ w \|_{H^2}^2\asymp |C_{\varphi} f_ w(0)|^2 +\|(C_{\varphi} f_ w)'\|^2_{A^2_ 1}.
\]
By the change of variables formula (see \cite{CMc}),
\[
\|(C_{\varphi} f_ w)'\|^2_{A^2_ 1}=2 \int_{\D} |f'_ w(\varphi(z))|^2\,|\varphi'(z)|^2\,(1-|z|^2) \,dA(z)=2\int_{\D} |f'_ w(\zeta)|^2 N^*_{\varphi}(\zeta) dA(\zeta),
\]
where $dA$ is the normalized area measure on $\mathbb{D}$. In view of Theorem \ref{FinalT}, we have $C_{\varphi}\in S_ p(H^2)$ if and only if
\begin{equation}\label{FE4}
I(\varphi):=\int_{\D}\left ((1-|w|^2)^{1+t}\int_{\D}  \frac{N^*_{\varphi}(z)\,dA(z)}{|1- z\overline{w}|^{4+t}} \right )^{p/2}\,d\lambda_ 1(w)<\infty.
\end{equation}
From here, the necessity follows from the fact that $N_{\varphi}^*$ satisfies the inequality (a consequence of \cite[Lemma 3.18]{CMc})
\[
\frac{N_{\varphi}^*(w)}{(1-|w|^2)^{2+t}}\lesssim \int_{\D}  \frac{N^*_{\varphi}(z)\,dA(z)}{|1-z\overline{w} |^{4+t}}.
\]
When $p\ge 2$, from \eqref{FE4} we obtain the sufficiency after an application of H\"{o}lder's inequality with exponent $p/2$, and the typical integral estimate in Lemma \ref{IctBn}. The case $0<p<2$ requires more work. Take an $r$-lattice $\{a_ k\}$, and let $D_ k=D(a_ k,r)$. Then
\[
\int_{\D}  \frac{N^*_{\varphi}(z)\,dA(z)}{|1-z\overline{w} |^{4+t}}\lesssim \sum_ k \frac{1}{|1-a_ k\overline{w} |^{4+t}}\int_{D_ k}  N^*_{\varphi}(z)\,dA(z).
\]
By \cite[Proposition 11.4]{Zhu}, for $z\in D_ k$ we have
\[
N^*_{\varphi}(z)^{p/2} \lesssim \frac{1}{(1-|z|^2)^2}\int_{D(z,r)} N^*_{\varphi}(\xi)^{p/2}dA(\xi)\lesssim \int_{\widetilde{D}_ k} N^*_{\varphi}(\xi)^{p/2}d\lambda_ 1(\xi),
\]
with $\widetilde{D}_ k=D(a_ k,2r)$. This gives
\[
\int_{\D}  \frac{N^*_{\varphi}(z)\,dA(z)}{|1- z\overline{w} |^{4+t}}\lesssim \sum_ k \frac{(1-|a_ k|^2)^2}{|1- a_ k\overline{w} |^{4+t}}\left (\int_{\widetilde{D}_ k} N^*_{\varphi}(\xi)^{p/2}d\lambda_ 1(\xi)\right )^{2/p}.
\]
Since $0<p/2<1$, we get
\[
\left (\int_{\D}  \frac{N^*_{\varphi}(z)\,dA(z)}{|1-z\overline{w} |^{4+t}} \right )^{p/2} \lesssim \sum_ k \frac{(1-|a_ k|^2)^p}{|1- a_ k\overline{w} |^{(4+t)p/2}}\int_{\widetilde{D}_ k} N^*_{\varphi}(\xi)^{p/2}d\lambda_ 1(\xi).
\]
Inserting this into \eqref{FE4} and applying the typical integral estimate, we obtain
\[
\begin{split}
I(\varphi) &\lesssim \sum_ k (1-|a_ k|^2)^p \left (\int_{\mathbb{D}}\frac{(1-|w|^2)^{(1+t)p/2} d\lambda_ 1(w)}{|1-a_ k\overline{w} |^{(4+t)p/2}}\right)\int_{\widetilde{D}_ k} N^*_{\varphi}(\xi)^{p/2}d\lambda_ 1(\xi)
\\
& \lesssim \sum_ k (1-|a_ k|^2)^{-p/2} \int_{\widetilde{D}_ k} N^*_{\varphi}(\xi)^{p/2}d\lambda_ 1(\xi).
\end{split}
\]
As $(1-|a_ k|)\asymp (1-|\xi|)$ for $\xi \in \widetilde{D}_ k$, and because any point $\xi \in \mathbb{D}$ belongs to at most $N$ of the sets $\widetilde{D}_ k$, we finally get
\[
I(\varphi) \lesssim \sum_ k  \int_{\widetilde{D}_ k} \left(\frac{N^*_{\varphi}(\xi)}{1-|\xi|}\right )^{p/2}d\lambda_ 1(\xi) \lesssim \int_{\mathbb{D}} \left(\frac{N^*_{\varphi}(\xi)}{1-|\xi|}\right )^{p/2}d\lambda_ 1(\xi).
\]
This finishes the proof.
\end{proof}
\mbox{}
\\
For a function $g\in H(\Bn)$, the Volterra type integration operator $J_ g$ is defined as
\[
J_ g f(z)=\int_{0}^{1} f(tz) Rg(tz) \frac{dt}{t},\qquad z\in \Bn
\]
for $f$ holomorphic in $\Bn$. An easy calculation  provides the  basic formula $R(J_ g f)=fRg$ involving the radial derivative $R$ and the operator $J_ g$.
If $J_ g:H^2\rightarrow H^2$,  then it is well known (and easy to see) that $J_ g^* J_ g =Q_{\mu_ g} $ with $d\mu_ g=|Rg|^2dv_ 1$, where now $J_ g^*$ denotes the Hilbert adjoint respect to the inner product $\langle f,g \rangle_ *=f(0)\overline{g(0)}+\int_{\Bn} Rf\,\overline{Rg}\,dv_ 1$. Therefore, $J_ g$ is in $S_ p(H^2)$ if and only if $Q_{\mu_ g}$ belongs to $S_{p/2}(H^2)$. By Theorem \ref{t-SC}, this is equivalent to
\begin{equation}\label{FEq1}
\int_{\Bn}\left ((1-|w|^2)^{n+t}\int_{\Bn}  \frac{|Rg(z)|^2\,dv_ 1(z)}{|1-\langle z,w \rangle |^{2n+t}}\right )^{p/2}\,d\lambda_ n(w)<\infty,
\end{equation}
for $t>t_ {p/2}$. From this condition, it is easy to see that $J_ g \in S_ p(H^2)$ if and only if $g$ belongs to the analytic Besov space $B_ p$ when $p>n$, and $g$ being constant when $0<p\le n$. This recovers the results from \cite{AS1} and \cite{P1}. Indeed, if \eqref{FEq1} holds, by ``subharmonicity" we get
\[
\int_{\Bn} \left ( (1-|w|^2)\,|Rg(w)|\right )^{p}\,d\lambda_ n(w)<\infty.
\]
Hence \eqref{FEq1} implies that $g\in B_ p$ for $p>n$ and $g$ constant when $0<p\le n$. Finally, if $p>n$ and $g\in B_ p$ we see that condition \eqref{FEq1} is satisfied when $p\ge 2$ after an application of H\"{o}lder's inequality and the typical integral estimate.  When $n=1$ and $1<p<2$, we apply the inequality
\[
|Rg(z)|^p \lesssim \frac{1}{(1-|z|^2)^{n+1}} \int_{D(z,r)} |Rg(\xi)|^p \,dv(\xi)
\]
 and argue in a similar manner as in the case of composition operators.

The result obtained for the integration operator $J_ g$ can be generalized. By the Littlewood-Paley inequalities we have
\[
\|J_ g f \|_{H^2}^2 \asymp \|R(J_ g f)\|^2_{A^2_ 1} =\int_{\Bn} |f(z)|^2 \,d\mu_ g(z).
\]
Hence we can view the operator $J_ g$ on $H^2$ as the Carleson embedding from $H^2$ to $L^2(\Bn,d\mu_ g)$. Next, we are going to characterize, for a positive Borel measure $\mu$, when the Carleson embedding $R_{\mu}: H^2 \rightarrow L^2(\mu):=L^2(\Bn,d\mu)$ is in $S_ p$. An easy computation yields $R^*_{\mu}R_{\mu}=Q_{\mu}$, and from Theorem \ref{t-SC} we obtain the following description.
\begin{theorem}
Let $\mu$ be a positive Borel measure on $\Bn$ and $0<p<\infty$. Then the Carleson embedding $R_{\mu}$ is in $S_ p(H^2,L^2(\mu))$ if and only if $S_ t\mu \in L^{p/2}(\Bn,d\lambda_ n)$ for $t>t_{p/2}$.
\end{theorem}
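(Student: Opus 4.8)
The plan is to deduce this statement directly from Theorem~\ref{t-SC}, via the operator identity $R_\mu^*R_\mu=Q_\mu$ together with the elementary relation between the singular numbers of a Hilbert space operator $T$ and those of $T^*T$.

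First I would make sure all the operators in sight are bounded. If $S_t\mu\in L^{p/2}(\Bn,d\lambda_n)$ for some $t>t_{p/2}$, then by Lemma~\ref{p-bound} the operator $Q_\mu$ is bounded on $H^2$, hence by Theorem~\ref{T2} the measure $\mu$ is Carleson and $R_\mu:H^2\to L^2(\mu)$ is bounded; conversely, if $R_\mu\in S_p(H^2,L^2(\mu))$ then $R_\mu$ is in particular bounded, so $\mu$ is Carleson by Theorem~\ref{T2} and all of $R_\mu$, $R_\mu^*$, $Q_\mu$ are bounded. In either case, for $f,g\in H^2$ one computes, using the reproducing property $f(w)=\langle f,K_w\rangle_{H^2}$ exactly as in the proof of Lemma~\ref{L-Qmu1},
\[
\langle R_\mu f,R_\mu g\rangle_{L^2(\mu)}=\int_{\Bn}f(w)\overline{g(w)}\,d\mu(w)=\langle Q_\mu f,g\rangle_{H^2},
\]
so that $R_\mu^*R_\mu=Q_\mu$ as bounded operators on $H^2$.

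Next I would invoke the standard fact that a bounded operator $T:H_1\to H_2$ between separable Hilbert spaces lies in $S_p(H_1,H_2)$ if and only if the positive operator $T^*T$ lies in $S_{p/2}(H_1)$. Indeed, the singular numbers of $T$ are by definition the square roots of the eigenvalues of $T^*T$, so $s_k(T)^2=s_k(T^*T)$ for every $k$, whence $\sum_k s_k(T)^p=\sum_k s_k(T^*T)^{p/2}$; this identity holds for all $0<p<\infty$ and is insensitive to whether $S_p$ is a genuine norm or only a quasi-norm. Applying it with $T=R_\mu$ and $T^*T=Q_\mu$ gives that $R_\mu\in S_p(H^2,L^2(\mu))$ if and only if $Q_\mu\in S_{p/2}(H^2)$.

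Finally, I would apply Theorem~\ref{t-SC} with the exponent $p/2$ in place of $p$: the membership $Q_\mu\in S_{p/2}(H^2)$ is equivalent to $S_t\mu\in L^{p/2}(\Bn,d\lambda_n)$ for all (equivalently, some) $t>t_{p/2}$, which is precisely the claimed condition. There is essentially no obstacle beyond the bookkeeping of boundedness noted above; the analytic content has already been extracted in Theorem~\ref{t-SC}, and this result is a clean corollary of it.
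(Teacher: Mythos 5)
Your proposal is correct and follows exactly the route the paper takes: the paper's entire proof is the observation that $R_\mu^*R_\mu=Q_\mu$ combined with Theorem \ref{t-SC} applied with exponent $p/2$, and you have simply filled in the boundedness bookkeeping and the standard relation $s_k(R_\mu)^2=s_k(Q_\mu)$. (The only quibble is that the boundedness of $R_\mu$ for a Carleson measure is H\"ormander's embedding theorem rather than Theorem \ref{T2}, but this does not affect the argument.)
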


This characterization can be compared with the result obtained by Smith \cite{Sm} in the case of the unit disk.

\end{document}